\definecolor{mycitecolor}{rgb}{0,0,0.8}
\def\tikzcd@sep#1#2#3{
  \pgfkeysifdefined{/tikz/commutative diagrams/#1 sep/#2}%
    {\pgfkeysalso{/tikz/#1 sep={\ifx\\#3\\1*\else1.7*\fi\pgfkeysvalueof{/tikz/commutative diagrams/#1 sep/#2},#3}}}%
    {\pgfkeysalso{/tikz/#1 sep={#2,#3}}}}
\newcommand{\btk}{\begin{tikzcd}}
\newcommand{\etk}{\end{tikzcd}}
\newcommand{\bc}{\begin{center}}
\newcommand{\ec}{\end{center}}
\newcommand{\Ch}{\operatorname{\mathsf{Ch^b}}}
\newcommand{\Chdw}{\operatorname{\mathsf{Ch^b_{dw}}}}
\newcommand{\loc}{\operatorname{\mathrm{loc}}}
\newcommand{\we}{\xrightarrow{\sim}}
\newcommand{\inj}{\operatorname{\mathsf{inj}}}
\newcommand{\Rinj}{R\mathsf{-inj}}
\newcommand{\proj}{\operatorname{\mathsf{proj}}}
\newcommand{\Rproj}{R\mathsf{-proj}}
\newcommand{\contr}{\operatorname{\mathsf{contr}}}
\newcommand{\CM}{\operatorname{\mathsf{CM}}}
\newcommand{\Rmod}{R\mathsf{-mod}}
\newcommand{\Sp}{\mathrm{Sp}}
\newcommand{\cof}{\hookrightarrow}
\DeclareMathOperator{\Hom}{Hom}
\DeclareMathOperator{\id}{id}
\DeclareMathOperator{\depth}{depth}
\DeclareMathOperator{\Span}{Span}
\DeclareMathOperator{\Ar}{Ar}
\DeclareMathOperator{\Ob}{Ob}
\DeclareMathOperator{\Ext}{Ext}
\DeclareMathOperator{\weq}{we}
\DeclareMathOperator{\co}{co}
\DeclareMathOperator{\cok}{coker}
\newcommand{\C}{\mathcal{C}}
\newcommand{\Z}{\mathcal{Z}}
\newcommand{\E}{\mathcal{E}}
\newcommand{\D}{\mathcal{D}}
\newcommand{\I}{\mathcal{I}}
\renewcommand{\P}{\mathcal{P}}
\newcommand{\F}{\mathcal{F}}
\newcommand{\A}{\mathcal{A}}
\newcommand{\B}{\mathcal{B}}
\theoremstyle{theorem}
\newtheorem{thm}{Theorem}[section]
\newtheorem{prop}[thm]{Proposition}
\newtheorem{lemma}[thm]{Lemma}
\theoremstyle{definition}
\newtheorem{defn}[thm]{Definition}
\theoremstyle{remark}
\newtheorem{ex}[thm]{Example}
\newtheorem{rmk}[thm]{Remark}
\newcommand{\stopnumberingbysection}{\renewcommand{\theequation}{\arabic{equation}}}
\theoremstyle{theorem}
\newtheorem{theorem}[equation]{Theorem}
\begin{document}

\title{Cotorsion pairs and a K-theory localization theorem}

\author{\scriptsize MARU SARAZOLA}
\address{\small Department of Mathematics, \\ Cornell University,\\
Ithaca NY, 14853, USA }
\email{mes462@cornell.edu}



\maketitle
\begin{abstract}
We show that a complete hereditary cotorsion pair $(\C,\C^\bot)$ in an exact category $\E$, together with a subcategory $\Z\subseteq\E$ containing $\C^\bot$, determines a Waldhausen category structure on the exact category $\C$, in which $\Z$ is the class of acyclic objects.

This allows us to prove a new version of Quillen's Localization Theorem, relating the $K$-theory of exact categories $\A\subseteq\B$ to that of a cofiber. The novel idea in our approach is that, instead of looking for an exact quotient category that serves as the cofiber, we produce a Waldhausen category, constructed through a cotorsion pair.
Notably, we do not require $\A$ to be a Serre subcategory, which produces new examples.

Due to the algebraic nature of our Waldhausen categories, we are able to
recover a version of Quillen's Resolution Theorem, now in a more homotopical setting that allows for weak equivalences.
\end{abstract}
\vspace{0.5cm}

Keywords: cotorsion pair, exact category, Waldhausen category, algebraic $K$-theory, localization.

MSC: 19D99 (Primary) 18F25, 18E10, 18G25, 18G15 (Secondary).

\tableofcontents

\section{Introduction}

\stopnumberingbysection


Ever since its origin, algebraic $K$-theory has proved to be exceedingly hard to compute, which is why results relating the $K$-theory groups of different categories are of vital importance in the field. Notably, a major tool in this direction consists of finding homotopy fibration sequences $$K(\mathcal{X})\to K(\mathcal{Y})\to K(\mathcal{Z})$$ relating the $K$-theory spaces of categories $\mathcal{X}, \mathcal{Y}$ and $\mathcal{Z}$ because, since the $n$th $K$-theory group $K_n(\mathcal{X})$ is defined as the $n$th homotopy group of the $K$-theory space $K(\mathcal{X})$, such homotopy fibration sequences induce a long exact sequence $$\dots\to K_{n+1}(\mathcal{Z})\to K_n(\mathcal{X})\to K_n(\mathcal{Y})\to K_n(\mathcal{Z})\to K_{n-1}(\mathcal{X})\to\dots$$ ending in $K_0(\mathcal{X})\to K_0(\mathcal{Y})\to K_0(\mathcal{Z})\to 0$.

Among these results, one of the most useful
is Quillen's Localization Theorem, relating the $K$-theory of two abelian categories $\A\subseteq\B$, and that of their quotient $\B/\A$.

More precisely, let $\B$ be an abelian category, and $\A\subseteq\B$ a Serre subcategory; that is, an abelian subcategory closed under extensions, subobjects and quotients. In this situation, it is possible to define a quotient abelian category $\B/\A$, together with an exact quotient functor $\loc\colon\B\to\B/\A$, see \cite[Thm.\ 2.1]{Swan}, or also \cite[Ch.\ III 1,2]{Gabriel}. This quotient functor satisfies a universal property: given an exact functor $F\colon \B\to\C$ between abelian categories such that $F(A)\cong 0$ for every $A\in\A$, there exists a unique exact functor $F'\colon\B/\A\to\C$ with $F=F'\loc$.

Then, the algebraic $K$-theory of the categories $\A, \B$ and $\B/\A$ are related in the following way:
\begin{theorem}\cite[Thm.\ 5]{Qui73}
Let $\A$ be a Serre subcategory of a small abelian category $\B$. Then $$K(\A)\to K(\B)\xrightarrow{\loc} K(\B/\A)$$ is a homotopy fibration sequence.
\end{theorem}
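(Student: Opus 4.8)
The plan is to pass through Quillen's $Q$-construction, recalling that $\K(\mathcal C)=\Omega BQ\mathcal C$ for every exact (in particular abelian) category $\mathcal C$, and to show that the exact functor $\loc$ induces a map $BQ\loc\colon BQ\B\to BQ(\B/\A)$ whose homotopy fibre over the zero object is $BQ\A$. As $BQ(\B/\A)$ is connected and the loop functor carries a homotopy fibration sequence over a connected base again to one, this yields at once the desired homotopy fibration $\K(\A)\to\K(\B)\to\K(\B/\A)$.

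To compute that homotopy fibre I would appeal to Quillen's Theorem B \cite[Thm.\ B]{Qui73}, applied to $F=Q\loc\colon Q\B\to Q(\B/\A)$ at the object $0$. Recall that it asserts: if for every morphism $M\to M'$ of $Q(\B/\A)$ the postcomposition functor $F/M\to F/M'$ between comma categories is a homotopy equivalence, then the square whose corners are the classifying spaces of $F/0$, $Q\B$, $Q(\B/\A)/0$ and $Q(\B/\A)$ is homotopy cartesian. Since $Q(\B/\A)/0$ has a terminal object, namely $(0,\id_0)$, its classifying space is contractible, so $B(F/0)$ is then precisely the homotopy fibre of $BQ\loc$ over $0$. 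Two tasks remain: (a) verifying the hypothesis of Theorem B; and (b) identifying $B(F/0)$ with $BQ\A$.

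Task (b) is essentially formal. An object of $F/0$ is a pair $(X,u)$ with $X\in\B$ and $u$ a morphism $\loc X\to 0$ in $Q(\B/\A)$; but such a $u$ is represented by a span $\loc X\twoheadleftarrow N\rightarrowtail 0$, which forces $N\cong 0$, hence $\loc X\cong 0$, that is $X\in\A$ (as $\A=\ker\loc$), and then $u$ is unique. Consequently a morphism of $F/0$ is just a morphism of $Q\B$ between objects of $\A$; and since $\A\subseteq\B$ is a Serre subcategory --- closed in particular under subobjects and quotients, so that the spans representing such morphisms already lie in $\A$ --- these are exactly the morphisms of $Q\A$. Hence $F/0\cong Q\A$, and $B(F/0)\cong BQ\A$.

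Task (a) is where the real work lies, and it is the step I expect to be the main obstacle. I would factor an arbitrary morphism of $Q(\B/\A)$ as $i_!\circ p^{!}$ for an admissible monomorphism $i$ and an admissible epimorphism $p$ of $\B/\A$, and treat the two cases in turn. An object of a comma category $F/M$ carries the datum of a subobject $N\subseteq M$ together with an admissible epi $N\twoheadrightarrow\loc X$, and the comparison functors act on it by, respectively, regarding $N$ inside the larger object along $i$, and pulling back the subobject along $p$; in each case one shows the comparison functor is a homotopy equivalence by exhibiting an adjoint --- built by intersecting, respectively pushing forward, the subquotient datum --- or, equivalently, by checking with Quillen's Theorem A that its comma categories are contractible. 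Carrying this out requires a careful use of the structure of the Serre quotient: that $\B/\A$ is abelian with $\loc$ exact and essentially surjective, that the morphisms of $\B/\A$ are obtained from those of $\B$ by a calculus of fractions inverting the maps with kernel and cokernel in $\A$, and a good deal of bookkeeping inside the $Q$-construction. The hypothesis that $\A$ is a Serre subcategory is used essentially in both (a) and (b); dispensing with it --- by replacing the quotient abelian category with a Waldhausen category built from a cotorsion pair --- is precisely the aim of the present paper.
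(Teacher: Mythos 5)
This statement is not proved in the paper at all --- it is quoted verbatim from Quillen and cited as \cite[Thm.\ 5]{Qui73}, so there is no internal proof to compare against; what you have written is a reconstruction of Quillen's original argument. Your overall architecture is the right one (apply Theorem~B to $F=Q\loc$ at the object $0$, use contractibility of $Q(\B/\A)/0$, then loop), and your task~(b) is carried out correctly: a morphism $\loc X\to 0$ in $Q(\B/\A)$ is a span $\loc X\twoheadleftarrow N\rightarrowtail 0$, forcing $N=0$ and hence $\loc X\cong 0$, i.e.\ $X\in\A$; and closure of $\A$ under subobjects identifies the full subcategory of $Q\B$ on these objects with $Q\A$.

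The genuine gap is task~(a), which you correctly flag as ``where the real work lies'' but then do not do --- and this step is the entire mathematical content of the theorem (it occupies most of \S 5 of Quillen's paper). Moreover, the mechanism you propose would not work as stated: the base-change functors $F/M\to F/M'$ do not in general admit adjoints. For a monomorphism $i\colon M\rightarrowtail M'$, a putative adjoint to $i_!$ would have to send an object $(X,\ \loc X\twoheadleftarrow L\rightarrowtail M')$ to something built from $L\cap M$, but $L\cap M$ only surjects onto a subquotient of $\loc X$, not onto $\loc X$ itself, so no functor to $F/M$ is induced. Quillen's actual argument is more elaborate: he passes to the full subcategories $F_V\subseteq F/V$ of pairs whose structure map $FX\to V$ is an isomorphism, shows the inclusions $F_V\hookrightarrow F/V$ and the base-change functors between the $F_V$'s are homotopy equivalences via Theorem~A, and the contractibility of the relevant comma categories is established by showing they are filtering --- which is precisely where the calculus-of-fractions description of $\B/\A$ and the Serre condition on $\A$ (closure under subobjects, quotients, and extensions) are used. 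Your parenthetical ``or, equivalently, by checking with Theorem~A'' is also not an equivalence: an adjoint gives a homotopy equivalence, but Theorem~A is the strictly more general tool that is actually needed here. As it stands, the proposal is a correct road map with the central verification missing.
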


Although immensely useful, this result suffers from an evident limitation: it only applies to abelian categories, while many of the categories of interest to $K$-theory are not abelian, but exact (for example, the $K$-theory of a ring is defined as $K(R)=K(R\mathsf{-proj})$, where $R\mathsf{-proj}$ denotes the exact category of finitely generated projective $R$-modules).

Various authors have followed this line of thought and successfully generalized Quillen's Localization Theorem for abelian categories to exact categories by requiring additional conditions on the Serre subcategory $\A$. Their results can be stated as follows.
\begin{theorem} \cite{Sch},
\cite{Cardenas}
Let $\B$ be an exact category, and $\A\subseteq\B$ a Serre subcategory; that is, a full subcategory closed under extension, subobjects and quotients. If in addition $\A$ is left or right s-filtering \cite{Sch}, 
or localizes $\B$ \cite{Cardenas}, then there exists an exact quotient category $\B/\A$ and an exact functor $\loc\colon\B\to\B/\A$ such that $$K(\A)\to K(\B)\xrightarrow{\loc} K(\B/\A)$$ is a homotopy fibration sequence. Furthermore, the functor $\B\xrightarrow{\loc}\B/\A$ is universal among exact functors $\B\to\C$ that vanish on $\A$, where $\C$ is exact.
\end{theorem}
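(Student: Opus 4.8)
The plan is to construct the exact quotient $\B/\A$ by hand, and then establish the fibration sequence by reducing to bounded chain complexes, where Waldhausen's machinery applies. For the quotient, let $\Sigma$ be the class of morphisms of $\B$ generated under composition by the admissible monomorphisms with cokernel in $\A$ and the admissible epimorphisms with kernel in $\A$; these are exactly the maps that any exact functor vanishing on $\A$ is forced to invert. The first step is to check that the s-filtering hypothesis makes $\Sigma$ into a (say, right) calculus of fractions on $\B$: the filtering axiom is precisely what lets one complete the squares demanded by the Ore condition, while the ``special'' axiom controls the interaction of $\Sigma$ with the admissible short exact sequences. Granting this, set $\B/\A:=\B[\Sigma^{-1}]$, computed via fractions; it is then automatically additive, and one equips it with the exact structure whose conflations are, up to isomorphism, the images of conflations of $\B$. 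By construction $\loc\colon\B\to\B/\A$ is exact, and the universal property is now formal: an exact $F\colon\B\to\C$ vanishing on $\A$ carries an admissible mono with cokernel in $\A$ to an admissible mono with cokernel $0$, i.e.\ an isomorphism (dually for epis), so it inverts $\Sigma$ and factors uniquely through $\loc$, and the factorization is exact because every conflation of $\B/\A$ lifts to one of $\B$.

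For the $K$-theory statement I would invoke the Gillet--Waldhausen theorem $K(\E)\simeq K(\Ch(\E),\mathrm{qis})$, valid for an (idempotent complete) exact category $\E$, where $\Ch(\E)$ denotes bounded complexes with the Waldhausen structure whose cofibrations are the degreewise admissible monomorphisms and whose weak equivalences are the quasi-isomorphisms. Write $\Ch_\A(\B)\subseteq\Ch(\B)$ for the full Waldhausen subcategory of complexes all of whose homology objects lie in $\A$, and let $w_\A$ be the class of maps in $\Ch(\B)$ whose cone lies in $\Ch_\A(\B)$. Checking Waldhausen's hypotheses in this situation (a cylinder functor satisfying the cylinder, extension and saturation axioms) is routine, so the fibration theorem yields a homotopy fibration sequence
$$K(\Ch_\A(\B),\mathrm{qis})\longrightarrow K(\Ch(\B),\mathrm{qis})\longrightarrow K(\Ch(\B),w_\A).$$

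It remains to identify the outer terms. The right-hand one computes the $K$-theory of the Verdier quotient $D^b(\B)/D^b_\A(\B)$, which the calculus of fractions lets one identify with $D^b(\B/\A)$; running Gillet--Waldhausen backwards then gives $K(\Ch(\B),w_\A)\simeq K(\B/\A)$ --- here one should take $\B/\A$ idempotent complete so that the resulting sequence is a genuine fibration rather than one only up to the image of $K_0$. For the left-hand term one must prove $K(\Ch_\A(\B),\mathrm{qis})\simeq K(\A)$: a bounded complex of objects of $\B$ with homology in $\A$ should be replaced, functorially enough for the $K$-theory argument, by a quasi-isomorphic complex built out of objects of $\A$, after which an approximation or cofinality argument identifies its $K$-theory with $K(\Ch(\A),\mathrm{qis})\simeq K(\A)$. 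This homotopy-coherent d\'evissage is where the s-filtering hypothesis is genuinely used, and it is the step I expect to be the main obstacle; the rest is either formal or a verification of standard axioms. In the variant where $\A$ only localizes $\B$, one can instead imitate Quillen's original argument, applying Theorem~B to $Q(\loc)\colon Q\B\to Q(\B/\A)$ and computing its homotopy fiber by a d\'evissage that identifies it with $Q\A$.
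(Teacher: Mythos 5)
The paper does not prove this statement: it is quoted as background, with the proof deferred entirely to Schlichting \cite{Sch} and Cardenas \cite{Cardenas}, so there is no internal argument to compare yours against. Judged on its own terms, your outline does follow the strategy of \cite{Sch} --- localize $\B$ at the class $\Sigma$ generated by admissible monomorphisms with cokernel in $\A$ and admissible epimorphisms with kernel in $\A$, pass to bounded complexes via Gillet--Waldhausen, and apply the Waldhausen/Thomason-style localization theorem --- but it is a roadmap rather than a proof, and the steps you defer are exactly where the theorem lives. First, verifying that $\Sigma$ admits a calculus of fractions and that $\B[\Sigma^{-1}]$ carries an exact structure whose conflations are the images of conflations of $\B$ is the principal content of the s-filtering hypothesis; writing ``granting this'' grants most of the first half of the statement. (It is also not symmetric: left s-filtering gives a calculus of left fractions and right s-filtering a calculus of right fractions, and closing the candidate conflations under the exact-category axioms uses the ``special'' part of the filtering condition in an essential way.)

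Second, your definition of $\Ch_\A(\B)$ as the complexes ``all of whose homology objects lie in $\A$'' is not well posed: a general exact category has no images and no homology objects. The correct substitute (complexes that become acyclic in the idempotent completion of $\B/\A$, or that admit an $\A$-supported filtration) presupposes the quotient already constructed, so this cannot be treated as a harmless notational shortcut. Third --- and you flag this yourself --- the identification $K(\Ch_\A(\B),\mathrm{qis})\simeq K(\A)$ is the genuine d\'evissage-type heart of the argument: it is precisely where s-filtering is used to replace such a complex, up to quasi-isomorphism, by a bounded complex of objects of $\A$, and without it the fibration sequence you produce has an unidentified fiber. Until these three steps are supplied the proposal correctly locates the difficulties but does not resolve any of them.
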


These exact versions of the Localization Theorem certainly widen the range of applications, but may still be quite restrictive. For example, given a well-behaved ring $R$, one would like to be able to apply the Localization Theorem to the categories $R\mathsf{-proj}\subseteq R\mathsf{-mod}$ and obtain a long exact sequence relating $K(R)=K(R\mathsf{-proj})$ and $G(R)=K(R\mathsf{-mod})$. However, recalling that every finitely generated $R$-module is a quotient of a (finitely generated) free $R$-module, we see that $R\mathsf{-proj}$ is never a Serre subcategory unless $R$ is such that every finitely generated $R$-module is projective,\footnote{This will happen precisely when $R$ is a product of matrices over division rings.} in which case the long exact sequence is no longer necessary.

Our goal is then to prove an exact version of the Localization Theorem that allows for a different type of subcategory, other than Serre subcategories. Since this property is vital when proving that the quotient category $\B/\A$ is exact for each of the three versions of exact localization mentioned above, we take a different approach: instead of looking for an {\it exact} quotient category $\B/\A$ whose morphisms encode the vanishing of $\A$ on $K$-theory, we construct a {\it Waldhausen} category structure on $\B$ whose weak equivalences encode the vanishing of $\A$ on $K$-theory. Our main result is the following (see Theorem \ref{localization} and Theorem \ref{universal_prop}).

\begin{theorem}\label{main_thm_1}
Let $\B$ be an exact category closed under kernels of epimorphisms and with enough injective objects, and let $\A\subseteq\B$ be a full subcategory with the 2-out-of-3 property for short exact sequences and containing all injective objects. 
Then there exists a Waldhausen category structure on $\B$ with the admissible monomorphisms as the cofibrations, denoted $(\B, w_\A)$, such that 
$$K(\A)\to K(\B)\to K(\B,w_\A)$$ is a homotopy fibration sequence.
Furthermore, the functor $\B\xrightarrow{\id_\B} (\B,w_\A)$ is universal among exact functors $F\colon\B\to\C$ such that $0\to FA$ is a weak equivalence for each $A\in\A$, where $\C$ is a Waldhausen category satisfying the extension and saturation axioms.
\end{theorem}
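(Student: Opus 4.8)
The plan is to build the Waldhausen structure from the cotorsion-pair machinery developed earlier in the paper, and then extract both the fibration sequence and the universal property from it.

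\medskip

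\noindent\textbf{Step 1: Produce the cotorsion pair.} Under the standing hypotheses — $\B$ exact, closed under kernels of epimorphisms, with enough injectives — the pair $(\B,\mathrm{Inj}(\B))$ is not yet what we want; instead I would take the cotorsion pair generated by the situation, namely $(\C,\C^\bot)$ with $\C = \B$ itself (every object is ``cofibrant''), or more precisely the one whose right class is the injectives. The key point is that enough injectives plus closure under kernels of epis is exactly what guarantees that the cotorsion pair $(\B, \mathrm{Inj}(\B))$ is complete and hereditary. Then set $\Z = \A$: the hypothesis that $\A$ contains all injectives gives $\C^\bot = \mathrm{Inj}(\B)\subseteq\Z$, and the $2$-out-of-$3$ property for short exact sequences is precisely the closure condition the abstract theorem demands of $\Z$. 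Invoking the main construction of the paper (the one referenced in the abstract), this data determines a Waldhausen category structure on $\C = \B$ with admissible monomorphisms as cofibrations and with $\Z = \A$ as the acyclic objects; write $w_\A$ for the resulting weak equivalences. This disposes of the existence claim.

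\medskip

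\noindent\textbf{Step 2: The fibration sequence.} For $K(\A)\to K(\B)\to K(\B,w_\A)$ I would apply Waldhausen's Fibration Theorem. Equip $\B$ with two categories of weak equivalences: the isomorphisms $i\B$, and the larger class $w_\A$. The subcategory $\B^{w_\A}$ of $w_\A$-acyclic objects is, by construction, exactly $\A$ (with its inherited exact, hence Waldhausen, structure and isomorphisms as weak equivalences). Waldhausen's theorem then yields a homotopy fibration $K(\B^{w_\A})\to K(\B, i\B)\to K(\B, w_\A)$, i.e. $K(\A)\to K(\B)\to K(\B,w_\A)$, provided the hypotheses of the Fibration Theorem hold — cylinder functor, saturation and extension axioms for $w_\A$. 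The saturation and extension axioms for $w_\A$ should come for free from the cotorsion-pair construction (they are built into how the acyclics control the weak equivalences); the cylinder functor is the usual mapping cylinder on the exact category $\B$, which is available because $\B$ is closed under the relevant pushouts along cofibrations. One must also check that $K(\B, i\B)$ agrees with the Quillen $K$-theory $K(\B)$ of the exact category, which is Waldhausen's comparison between the $S_\bullet$-construction and the $Q$-construction.

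\medskip

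\noindent\textbf{Step 3: The universal property.} Given an exact functor $F\colon\B\to\C$ into a Waldhausen category satisfying extension and saturation such that $0\to FA$ is a weak equivalence for every $A\in\A$, I must show $F$ factors uniquely through $\id_\B\colon\B\to(\B,w_\A)$ as a so-called exact (weak-equivalence-preserving) functor. Uniqueness is immediate since $\id_\B$ is the identity on objects and morphisms; the content is that $F$ sends every morphism of $w_\A$ to a weak equivalence in $\C$. By the explicit description of $w_\A$ coming from the cotorsion pair, a morphism is a weak equivalence iff its cofiber (or an appropriate replacement) lies in $\A$; concretely, every $w_\A$-equivalence should be built, via the saturation and extension axioms, out of the ``elementary'' weak equivalences $0\to A$ and $A\to 0$ with $A\in\A$ together with cofibrations and their cofibers. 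Since $F$ is exact it preserves cofiber sequences, and by hypothesis it sends $0\to FA$ to a weak equivalence; the extension axiom in $\C$ then propagates this to all of $w_\A$, and saturation lets us conclude.

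\medskip

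\noindent The main obstacle is Step 3, and within it the claim that every $w_\A$-weak equivalence is generated — under cofiber sequences, the extension axiom, and the $2$-out-of-$3$/saturation properties — by the maps $0\to A$ for $A\in\A$. This is essentially a ``generation lemma'' for the weak equivalences of the constructed Waldhausen category, and proving it carefully requires unwinding the cotorsion-pair construction: given a $w_\A$-equivalence $f$, one factors it (using completeness of the cotorsion pair, i.e. enough injectives) as a cofibration followed by a map that is a weak equivalence with acyclic cofiber, reduces to the cofibration case, and then identifies the cofiber as an object of $\A$ using the $2$-out-of-$3$ hypothesis. Verifying the cylinder functor axioms and the $i\B$-vs-Quillen comparison in Step 2 are routine but need to be stated; the existence in Step 1 is a direct citation of the paper's central construction.
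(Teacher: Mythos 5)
Your Steps 1 and 3 follow essentially the same route as the paper: the Waldhausen structure comes from applying Theorem \ref{the_thm} to the complete hereditary cotorsion pair $(\B,\inj)$ with $\Z=\A$ (completeness from enough injectives, heredity from closure under kernels of epimorphisms, and $\inj\subseteq\A$ with 2-out-of-3 giving the required closure properties of $\Z$), and the universal property is proved exactly as you outline: one factors a $w_\A$-equivalence as an acyclic cofibration followed by an acyclic fibration, and uses the extension axiom in $\C$ on the cokernel of the first factor (resp.\ saturation plus extension on the kernel of the second), both of which $F$ sends to acyclic objects by hypothesis. Your ``generation lemma'' worry in Step 3 is thus resolved directly by the defining factorization of the weak equivalences; no further generation argument is needed.

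The genuine gap is in Step 2, where you invoke Waldhausen's fibration theorem via ``the usual mapping cylinder on the exact category $\B$.'' A general exact category carries no cylinder functor satisfying the cylinder axiom for $w_\A$: the only naive candidate for $T(f)$, $f\colon A\to B$, is $A\oplus B$, and the projection $A\oplus B\to B$ has kernel $A$, which need not lie in $\A$, so it is not a $w_\A$-equivalence. The paper sidesteps this by using Schlichting's cylinder-free version of the fibration theorem \cite[Thm.\ A.3]{Sch06}, whose substitute hypothesis --- every map factors as a cofibration followed by a weak equivalence --- is exactly what the completeness of the cotorsion pair provides (Proposition \ref{factorization} together with Lemma \ref{we_cofib}), alongside the saturation and extension axioms you already identified (Propositions \ref{saturation} and \ref{extension}; saturation is where the 2-out-of-3 hypothesis on $\A$ enters). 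Replacing the cylinder-functor claim by this factorization property and citing the cylinder-free theorem repairs the argument; as written, the step would fail.
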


As examples, we show our Localization Theorem can be applied to the inclusion $\Rproj\subset\Rmod$ when $R$ is a quasi-Frobenius ring (such as $\mathbb{Z}/n\mathbb{Z}, \Bbbk [G]$ for $\Bbbk$ a field and $G$ a finite group, or any finite dimensional Hopf algebra) in Example \ref{Frobenius}, and when $R$ is an Artin-Gorenstein ring in Example \ref{artin_gorenstein}.

We note that this result is not an extension of Quillen's Localization Theorem, but instead an alternate tool for producing homotopy fibration sequences, as any categories $\A$ and $\B$ satisfying the hypotheses of both Localization Theorems will necessarily satisfy $\A=\B$; we refer the reader to Remark \ref{alternate_way} for details.

It is well known that some computational tools are typically lost in the passage from exact categories to Waldhausen categories; for example, Quillen's Resolution Theorem, instrumental in the proof of the Fundamental Theorem of algebraic $K$-theory, does not translate to Waldhausen categories. However, the algebraic nature of our Waldhausen category $(\B, w_\A)$ allows us to recover a version of the Resolution Theorem for exact categories with weak equivalences obtained through our machinery; this is Theorem \ref{resolution}.

In order to construct this Waldhausen category structure on $\B$, we rely on the algebraic notion of cotorsion pairs. These are pairs $(\P,\I)$ of classes of objects in $\B$ which are orthogonal to each other with respect to the $\Ext^1_B$ functor, see Definition \ref{cotorsion_defn}.

The relation between cotorsion pairs and model categories was first introduced in \cite{BR07} and further explored in \cite{Hov02}. Essentially, Hovey shows that compatible pairs of cotorsion pairs on an abelian category $\A$ are in bijective correspondence with the abelian model category structures that can be defined on $\A$.

A study of the proof immediately reveals that each of the two cotorsion pairs involved is related to one of the factorization systems present in the model category; thus, both are truly necessary to determine a model category structure.

Inspired by Hovey's result, we take a step back from our goal of an exact Localization Theorem to study the relation between cotorsion pairs and Waldhausen categories in its full generality. In doing so, we are able to show that just one cotorsion pair (the one corresponding to the cofibrant objects in Hovey's result) is enough to determine a Waldhausen category structure on the full subcategory of cofibrant objects. Namely, we prove the following (see Theorem \ref{the_thm}; compare with \cite[Thm.\ 2.5]{Hov07}).

\begin{theorem}\label{main_thm_2}
Let $\E$ be an exact category, and $\Z$, $\C$ two full subcategories of $\E$ such that $\Z\cap\C$ is closed under extensions and cokernels of admissible monomorphisms in $\C$, and $\C$ is part of a complete hereditary cotorsion pair $(\C,\C^\bot)$. Assume also that  $\C^\bot\subseteq\Z$. 

Then $\C$ admits a Waldhausen category structure $(\C,w_\Z)$, with the $\C$-admissible monomorphisms as the cofibrations, and with the weak equivalences as the morphisms that factor as a $\C$-admissible monomorphism with cokernel in  $\C\cap\Z$ followed by a $\C$-admissible epimorphism with kernel in $\C^\bot$.
\end{theorem}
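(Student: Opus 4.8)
The plan is to verify directly that the given cofibrations (the $\C$-admissible monomorphisms) and weak equivalences (the stated "factor through $\C\cap\Z$ then quotient by $\C^\bot$" maps) satisfy the axioms of a Waldhausen category. First I would record the basic closure properties of $\C$ that come from its being the left class of a complete hereditary cotorsion pair: $\C$ is closed under extensions, contains a zero object (since the cotorsion pair is complete), is closed under cokernels of admissible monomorphisms between its objects (this is where \emph{hereditary} enters, via the standard characterization that $\C$ is resolving/coresolving), and — crucially — it is an exact category in its own right with the admissible monomorphisms and epimorphisms inherited from $\E$ that stay in $\C$. With this in hand, the cofibration axioms are routine: the zero object maps into everything by an admissible mono; cofibrations are closed under composition inside an exact category; and pushouts of $\C$-admissible monomorphisms along arbitrary maps in $\C$ exist and are again $\C$-admissible monos — this is the one place I would lean on completeness of the cotorsion pair to guarantee the pushout, formed in $\E$, actually lands in $\C$ (the cokernel of the pushed-out mono is the same as the original cokernel, hence in $\C$, and $\C$ is closed under extensions).

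Next I would check the weak equivalence axioms. That every isomorphism is a weak equivalence is clear (factor as $0 \to 0$ composed with an iso, or note an iso is both a $\C$-admissible mono with zero cokernel and a $\C$-admissible epi with zero kernel). The substantive point is the 2-out-of-3 property — or at least the composition axiom plus the glueing axiom, which is what Waldhausen actually requires. For composition of two weak equivalences $f, g$, I would factor each as $(\text{mono with coker in }\C\cap\Z)$ followed by $(\text{epi with kernel in }\C^\bot)$ and then reshuffle: the composite of the two epi parts has kernel an extension of two objects of $\C^\bot$, hence in $\C^\bot$ (the cotorsion pair is hereditary, so $\C^\bot$ is closed under extensions and under kernels of admissible epis), and dually for the mono parts one uses that $\C\cap\Z$ is closed under extensions, which is part of the hypothesis. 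The cross terms — an epi-part followed by a mono-part — need to be re-factored using the exactness of $\C$ and a pullback/pushout argument to commute them past each other; this is the kind of diagram chase that also underlies showing the class is closed under retracts.

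The main obstacle, and the heart of the proof, is the \textbf{glueing axiom}: given a map of cofiber sequences in which the left and middle vertical maps are weak equivalences, the induced map on pushouts (cokernels) must be a weak equivalence. Here the algebraic description of weak equivalences is essential. I would argue as follows: reduce first to the two extreme cases — vertical maps that are $\C$-admissible monos with cokernel in $\C\cap\Z$, and vertical maps that are $\C$-admissible epis with kernel in $\C^\bot$ — and handle each by a $3\times 3$-lemma style argument in the exact category $\C$, tracking that the relevant cokernels/kernels of the induced map on pushouts are built as extensions or subquotients of the corresponding objects upstairs, and then invoking closure of $\C\cap\Z$ (by hypothesis) and of $\C^\bot$ (by heredity) under the needed operations. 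Assembling a general weak equivalence from these pieces then requires knowing the induced-map construction is compatible with composition of such factorizations, which is exactly the 2-out-of-3 bookkeeping from the previous step. I expect the hypothesis that $\Z\cap\C$ be closed under cokernels of $\C$-admissible monomorphisms to be precisely what makes the mono-type glueing case go through, while heredity of the cotorsion pair carries the epi-type case; the interplay of the two is the delicate part, and I would be careful that the pushout in the glueing axiom is formed in $\C$ and not merely in $\E$, using completeness once more to stay inside $\C$.

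Finally, I would note that the saturation and extension axioms — not asserted in the statement but relevant for the later applications — would follow from the same analysis, since the weak equivalences are defined by membership conditions on kernels and cokernels that are manifestly closed under the relevant two-out-of-three and extension manipulations; but for the theorem as stated it suffices to stop at the Waldhausen axioms verified above.
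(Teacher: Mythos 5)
Your high-level plan (routine cofibration axioms, then reduce the Gluing Lemma to a ``mono case'' and an ``epi case'') matches the paper's strategy, but there is a genuine gap at the heart of the argument: the mono case of gluing is \emph{not} provable by a $3\times 3$/snake-lemma argument alone. If $\alpha\colon A\to A'$, $\beta\colon B\to B'$, $\gamma\colon C\to C'$ are acyclic cofibrations, the snake lemma applied to the pushout squares gives $\ker\phi\hookrightarrow\ker\hat\beta$, where $\hat\beta$ is the induced map $\cok i\to\cok i'$ and $\ker\hat\beta$ identifies with the kernel of the induced map $\cok\alpha\to\cok\beta$. That kernel need not vanish, so the induced map $\phi\colon B+_AC\to B'+_{A'}C'$ need not even be a monomorphism; the hypothesis that $\Z\cap\C$ is closed under cokernels of admissible monos only helps once you already know $\phi$ is an admissible mono. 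The paper's Proposition 4.6 therefore carries the \emph{extra} hypothesis that $\cok\alpha\to\cok\beta$ is a cofibration, and the real work of the proof is arranging for this hypothesis to hold after factoring the vertical weak equivalences. The paper does this by passing to the category of spans: it shows (Appendix, Theorem 8.1) that a complete cotorsion pair on $\E$ induces one on $\Span(\E)$ whose cofibrations are exactly the triples $(i,j,k)$ for which the induced map on cokernels is also a cofibration, and then factors the whole map of spans at once. Your proposal contains no mechanism for producing this control on the cokernel map, and without it the mono-type gluing step fails.

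Two further points. First, factoring $\alpha,\beta,\gamma$ separately does not yield a map of spans through the intermediate objects $\overline A,\overline B,\overline C$; one must construct $\overline\imath,\overline\jmath$ via the lifting property against acyclic fibrations and then correct $\overline\imath$ to be a cofibration (another nontrivial step in the paper). Second, your appeals to ``2-out-of-3 bookkeeping'' and your closing claim that saturation ``would follow from the same analysis'' are not available: under the stated hypotheses the weak equivalences are \emph{not} saturated in general --- Proposition 5.5 of the paper shows saturation is equivalent to $\Z\cap\C$ having 2-out-of-3 for exact sequences, which is strictly stronger than what the theorem assumes. Only the restricted forms of 2-out-of-3 in the paper's Proposition 3.6 may be used.
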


Our result recovers familiar Waldhausen structures, such as the structure on bounded chain complexes with quasi-isomorphisms for weak equivalences, but also creates new examples that cannot be produced by restricting to the category of cofibrant objects of a model category obtained from Hovey's machinery, since not every cotorsion pair can be promoted to a pair of compatible cotorsion pairs.

\subsection*{Notation}

We clarify some notational conventions that we use in this paper. Coproducts will be denoted by $+$, and pushouts by $A+_B C$; similarly for products, pushouts, and $\times$. Given morphisms $f\colon A\to B, g\colon C\to B$ and $h\colon D\to E$, we abuse notation and use $+$ to denote both the map $f+g\colon A+C\to B$ given by $(a,c)\mapsto f(a)+g(c)$, and the map $f+h:A+D\to B+E$ given by $(a,d)\mapsto (f(a),h(d))$.

Cofibrations will be denoted by $\cof$, admissible epimorphisms by $\twoheadrightarrow$, and weak equivalences by $\we$. Exact categories will be denoted by their underlying category without mention to the specified class of exact sequences. If $\C$ is an exact category, we use $\C$ and $(\C,\text{isos})$ interchangeably to refer to the usual Waldhausen structure in $\C$. We use $(\C,w_\Z)$ for the Waldhausen structure with weak equivalences determined through our result, where we omit mention to the (usual) class of cofibrations.

Given an exact category $\C$, we use $\C^\bot$ (resp.\ ${}^\bot \C$) for its left (resp.\ right) orthogonal complement with respect to the $\Ext^1$ functor (see Defn.\ \ref{cotorsion_defn}). Cotorsion pairs will be denoted by $(\P,\I)$ when dealt with in abstract, and by $(\C,\C^\bot)$ in our main results, to emphasize it being determined by $\C$.

\subsection*{Acknowledgements}

The author is deeply grateful to her advisor, Inna Zakharevich, for all of her encouragement, and for many useful discussions throughout the preparation of this paper; especially, for pointing out a key fact that led to the proof of the Resolution Theorem. The author would also like to thank Charles Weibel for thought-provoking conversations related to this work, and Daniel Grayson and an anonymous referee for their detailed reading and feedback.



\tableofcontents


\section{Preliminaries}

This section includes the definitions required to make our paper self-contained. None of the definitions are original, and the reader familiar with exact categories, Waldhausen categories, and cotorsion pairs, can safely omit this preliminary material.

\subsection{Exact and Waldhausen categories}

Exact categories are of the utmost importance to $K$-theory. In fact, this was the setting originally used by Quillen in \cite{Qui73} to develop the notions of higher algebraic $K$-theory and extend the known results of classical $K$-theory ($K_n$ for $n=0,1,2$). While Quillen gives an intrinsic definition of exact categories, we choose to work with an equivalent definition that makes use of an ambient abelian category. The curious reader can find a proof of the equivalence of both definitions in Remark 2.8, Lemma 10.20, and Theorem A.1(i) of \cite{Buh10}.

\begin{defn}
An {\it exact category} is a pair $(\C,E)$ where $\C$ is an additive category and $E$ is a family of sequences in $\C$ of the form \[ 0\to A\xrightarrow{i} B \xrightarrow{p} C\to 0\] such that there exists an embedding of $\C$ as a full subcategory of an abelian category $\A$, with the following properties:
\begin{enumerate}
\item[(E1)] $E$ is the class of all sequences in $\C$ which are exact in $\A$,
\item[(E2)] $\C$ is closed under extensions; that is, given an exact sequence in $\A$ as above, if $A,C\in\C$ then $B$ is isomorphic to an object in $\C$.
\end{enumerate}
\end{defn}
Morphisms appearing as the first map in a sequence in $E$ (like $i$ above) are called admissible monomorphisms, and those appearing as the second map (like $p$) are admissible epimorphisms. We typically drop the class of exact sequences from the notation, and refer to the exact category $(\C,E)$ as $\C$.

\begin{defn}
A subcategory $\Z$ of an exact category $\C$ is {\it closed under cokernels of admissible monomorphisms} if for every exact sequence in $\C$
\[ 0\to A\to B\to C\to 0\]
such that $A,B\in \Z$, we have $C\in\Z$. It is {\it closed under kernels of admissible epimorphisms} if, whenever $B,C\in\Z$, we have $A\in\Z$.
We say $\Z$ has {\it 2-out-of-3 for exact sequences} if whenever two of the objects $A,B,C$ in an exact sequence as above are in $\Z$, the third one is as well.
\end{defn}

A more general setting for algebraic $K$-theory was introduced by Waldhausen \cite{Wal85} under the name of ``categories with cofibrations and weak equivalences''; in the modern literature these are called Waldhausen categories.

\begin{defn}
A \emph{Waldhausen category} consists of a category $\C$ with a zero object, together with two subcategories $\co\C$ and $\weq\C$, whose maps are respectively called cofibrations and weak equivalences, satisfying the following axioms:
\begin{enumerate}
\item[(C1)] all isomorphisms in $\C$ are cofibrations,
\item[(C2)] for every object $A$ in $\C$, the map $0\to A$ is a cofibration,
\item[(C3)] if $A\hookrightarrow B$ is a cofibration, then for any map $A\to C$, the pushout
    \[\btk
    A\rar[hookrightarrow]\dar & B\dar\\
    C\rar & B+_A C
    \etk\] exists in $\C$ and the map $C\to B+_A C$ is a cofibration,
\item[(W1)] all isomorphisms in $\C$ are weak equivalences,
\item[(W2)] the ``Gluing Lemma'': given a commutative diagram
\[\btk
C\dar["\sim"] & A\lar\rar[hookrightarrow]\dar["\sim"] & B\dar["\sim"]\\
C' & A'\lar\rar[hookrightarrow] & B'
\etk\] where the horizontal arrows on the right are cofibrations and all vertical maps are weak equivalences, the induced map $$B+_A C\to B'+_{A'} C'$$ is also a weak equivalence.
\end{enumerate}
\end{defn}

The correct notion of functor between Waldhausen categories is that of an exact functor.

\begin{defn}
A functor between Waldhausen categories $F:\C\to\D$ is {\it exact} if it preserves the $0$ object, cofibrations, weak equivalences, and the pushout diagrams of axiom (C3).
\end{defn}

\begin{rmk}\label{exact_are_waldhausen}
Exact categories form a notable example of Waldhausen categories. Given an exact category $\C$, we can consider it as a Waldhausen category by letting cofibrations be the admissible monomorphisms, and weak equivalences be the isomorphisms. From this perspective, Quillen's $K$-theory space of the exact category $\C$ agrees with Waldhausen's $K$-theory space of the Waldhausen category $\C$.
\end{rmk}

\subsection{Cotorsion pairs}

Just like in abelian categories, one can use the structure present in an exact category $\C$ to construct the functor $\Ext^1_\C$, which is precisely the restriction to $\C$ of the functor $\Ext^1$ defined in any ambient abelian category $\A$. For any two objects $A,B\in \C$, $\Ext^1_\C(A,B)$ is the abelian group of equivalence classes of extensions of $A$ by $B$ in $\C$; that is, classes of sequences in $\C$ of the form \begin{equation}\label{seq}
0\to B\to C\to A\to 0.
\end{equation} As in the usual case, one can observe that $\Ext^1_\C(A,B)=0$ precisely when every sequence in $\C$ as in (\ref{seq}) is isomorphic to the canonical split exact sequence \[ 0\to B\to A + B\to A\to 0;\] that is, when every sequence as in (\ref{seq}) splits.

In the presence of an $\Ext$ functor, we can define cotorsion pairs. These were introduced in the late 70's by Salce under the name of ``cotorsion theories'' \cite{Salce}, and became more widely known in the 90's when Bican, El Bashir, and Enochs used them to prove the flat cover conjecture: namely, that for any ring $R$, all $R$-modules admit a flat cover \cite{Enochs_etal}. The definition is as follows.

\begin{defn}\label{cotorsion_defn}
A \emph{cotorsion pair} in an exact category $\C$ is a pair $(\P,\I)$ consisting of two classes of objects of $\C$ that are the orthogonal complement of each other with respect to the $\Ext^1_\C$ functor. More explicitly, if we let
$$\P^\bot\coloneqq\{A\in\C \text{ such that }\Ext^1_\C(P,A)=0 \text{ for every } P\in\P\}$$ and
$${}^\bot \I\coloneqq\{A\in\C \text{ such that }\Ext^1_\C(A,I)=0 \text{ for every } I\in\I\},$$
then it must be that $\P^\bot=\I$ and ${}^\bot \I=\P$.
\end{defn}

\begin{rmk} It is not hard to observe that both the left and right classes participating in a cotorsion pair must be closed under extensions. This implies that any class of objects participating in a cotorsion pair in $\C$ is an exact category, when considered as a full subcategory of $\C$. Henceforth, we make no distinction between $\P$ and $\Ob\P$ whenever $\P\subseteq\C$ is a full exact subcategory whose class of objects participates in a cotorsion pair.
\end{rmk}

Note that cotorsion pairs provide a generalization of injective and projective objects in an exact category; indeed, an object $P$ is projective in $\C$ precisely when the functor $\Hom_\C(P,-)$ is exact, which is equivalent to requiring that $\Ext^1_\C(P,A)=0$ for every $A\in\C$. Thus, if $\proj$ denotes the full subcategory of projective objects in $\C$, we see that $(\proj,\C)$ is a cotorsion pair. Similarly, if $\inj$ denotes the full subcategory of injective objects, then $(\C,\inj)$ is a cotorsion pair.

Borrowing motivation from the case of injectives and projectives, it is of interest to know when a cotorsion pair provides resolutions for any given object.

\begin{defn}
Let $(\P,\I)$ be a cotorsion pair in an exact category $\C$. We say that the cotorsion pair is \emph{complete} if any object $A$ in $\C$ can be resolved as
\[ 0\to A\to I\to P\to 0\] for some $I\in\I$, $P\in\P$, and as
\[ 0\to I'\to P'\to A\to 0\] for some $I'\in\I$, $P'\in\P$.
\end{defn}

As an example, we can see that the cotorsion pair $(\C,\inj)$ is complete precisely if $\C$ has enough injectives.

\begin{defn}
A complete cotorsion pair $(\P,\I)$ is {\it hereditary} if the category $\P$ is closed under kernels of admissible epimorphisms in $\C$, or equivalently, if $\I$ is closed under cokernels of admissible monomorphisms in $\C$.
\end{defn}

For further details on cotorsion pairs, we refer the reader to \cite{Enochs}.






\section{Results from Hovey}

In this section we define the classes of maps we will use throughout the paper, and we recall a collection of results from \cite{Hov02} that will be essential when proving our main theorem. Since Hovey deals with two compatible cotorsion pairs, each of the results below are a subset of the cited statements in his article; however, one can check that the proofs for these claims only make use of the amount of structure present in our hypotheses.\footnote{The reader might note another discrepancy between our hypotheses and Hovey's: he requires the cotorsion pairs to be {\it functorially} complete. A careful study of his constructions reveals that this extra condition is used to obtain {\it functorial} factorizations of maps, which we do not require.}

For the entirety of this section, let $\E$ be an exact category, and $\C$, $\Z$ be two full subcategories of $\E$ satisfying the conditions of Theorem \ref{main_thm_2}.
The cotorsion pair $(\C,\C^\bot)$ can be used to define the following distinguished classes of morphisms:

\begin{itemize}
\item a morphism in $\C$ is a {\it cofibration} ($\hookrightarrow$) if it is an admissible monomorphism with cokernel in $\C$,
\item a morphism in $\E$ is an {\it acyclic fibration}$\btk[column sep=small] (\rar[twoheadrightarrow,"\sim"] & )\etk$if it is an admissible epimorphism with kernel in $\C^\bot$.
\end{itemize}

With the addition of the category $\Z$, we can also define the following:

\begin{itemize}
\item a morphism in $\C$ is an {\it acyclic cofibration}$\btk[column sep=small] (\rar[hookrightarrow,"\sim"] & )\etk$if it is an admissible monomorphism with cokernel in $\C\cap\Z$,
\item a morphism in $\C$ is a {\it weak equivalence} ($\we$) if it factors as the composition of an acyclic cofibration followed by an acyclic fibration.
\end{itemize}



The first three results we recall involve only the complete cotorsion pair $(\C,\C^\bot)$, and the classes of maps determined from it.

\begin{lemma}\label{compositions}\cite[Lemma 5.3]{Hov02}
Let $i\colon A\cof B$ and $j\colon B\cof C$ be two cofibrations. 
Then, the composition $ji\colon A\to C$ is also a cofibration. Similarly, the composition of two acyclic fibrations is again an acyclic fibration.
\end{lemma}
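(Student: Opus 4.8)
The plan is to reduce both halves of the lemma to the Noether isomorphism theorem, combined with the fact (recalled in the preliminaries) that each class participating in a cotorsion pair is closed under extensions.

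For the statement about cofibrations, suppose $i\colon A\cof B$ and $j\colon B\cof C$ are cofibrations, so that $i$ and $j$ are admissible monomorphisms with $\cok(i),\cok(j)\in\C$. I would first observe that $ji$ is an admissible monomorphism, since admissible monomorphisms in an exact category are closed under composition. It then remains to identify $\cok(ji)$. Working inside an ambient abelian category $\A$ for $\E$ (or invoking the Noether isomorphism theorem for exact categories directly), the inclusions $A\hookrightarrow B\hookrightarrow C$ produce a short exact sequence
\[
0\to \cok(i)\to \cok(ji)\to \cok(j)\to 0.
\]
By hypothesis the two outer terms lie in $\C$, and $\C$ is closed under extensions, so $\cok(ji)\in\C$; hence $ji$ is a cofibration. (Equivalently: the sequence $0\to A\to C\to\cok(ji)\to 0$ is exact in $\A$ with all three terms in $\C$, so by (E1)--(E2) it lies in the exact structure, exhibiting $ji$ as an admissible monomorphism with cokernel in $\C$.)

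The statement about acyclic fibrations is dual. Given acyclic fibrations $p\colon B\twoheadrightarrow A$ and $q\colon C\twoheadrightarrow B$, with $\ker(p),\ker(q)\in\C^\bot$, the composite $pq$ is again an admissible epimorphism, and restricting $q$ to $\ker(pq)$ yields a short exact sequence
\[
0\to\ker(q)\to\ker(pq)\to\ker(p)\to 0.
\]
Since $\C^\bot$ is closed under extensions, $\ker(pq)\in\C^\bot$, so $pq$ is an acyclic fibration.

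The only mildly delicate point is justifying the two short exact sequences of (co)kernels; once the Noether isomorphism is in hand, the argument is entirely formal, the content being carried solely by extension-closure of the two classes of the cotorsion pair. I do not expect a genuine obstacle here.
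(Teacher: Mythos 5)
Your argument is correct and is exactly the standard one: the paper itself does not reprove this lemma but defers to Hovey's Lemma 5.3, whose proof is precisely your combination of the Noether isomorphism (giving the short exact sequences $0\to\cok(i)\to\cok(ji)\to\cok(j)\to 0$ and its dual for kernels) with closure under extensions of each class of the cotorsion pair. No gaps; the auxiliary facts you invoke (composability of admissible monomorphisms/epimorphisms and the Noether isomorphism in an exact category) are standard, e.g.\ from \cite{Buh10}.
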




As we mentioned before, in Hovey's proof each cotorsion pair is related to one of the weak factorization systems present in a model category. In our setting, we retain one of the pairs, and thus we still have one factorization system. In particular, this implies the following two results.

\begin{prop}\label{liftings}\cite[Prop.\ 4.2]{Hov02}
A map is an admissible monomorphism with cokernel in $\C$ if and only if it has the left lifting property with respect to all acyclic fibrations.
\end{prop}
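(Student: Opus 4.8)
The statement asserts exactly that (cofibrations, acyclic fibrations) is the left half of a weak factorization system, and I would prove it by the classical pattern --- the same one underlying Hovey's \cite[Prop.\ 4.2]{Hov02} --- resting on two inputs: the $\Ext^1_\E$-orthogonality packaged in the cotorsion pair $(\C,\C^\bot)$, and the factorizations produced by its completeness. Treat the two implications separately.

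\emph{A cofibration has the left lifting property.} Let $i\colon A\cof B$ have $\cok(i)=C\in\C$, let $p\colon X\twoheadrightarrow Y$ be an admissible epimorphism with $\ker(p)=K\in\C^\bot$, and fix a commutative square with $i$ on the left and $p$ on the right. Replace $p$ by its pullback $\bar p\colon E\coloneqq X\times_Y B\twoheadrightarrow B$, which is again an admissible epimorphism with kernel $K$; the square provides a map $a\colon A\to E$ with $\bar p a=i$, and a lift in the original square is precisely a section $s$ of $\bar p$ with $si=a$. Now push out $i$ along $a$ to obtain $\iota\colon E\cof D\coloneqq E+_A B$, an admissible monomorphism with $\cok(\iota)\cong C$, together with a map $B\to D$ completing the pushout square. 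The pair $(\bar p,\id_B)$ induces a retraction $r\colon D\to B$ of $B\to D$, so $r$ is a split --- hence admissible --- epimorphism; a short computation identifies $\ker(r)$ with $\cok(a\colon A\to E)$ and produces an admissible short exact sequence $0\to K\to\ker(r)\to C\to 0$. Since $C\in\C$ and $K\in\C^\bot$, its class lies in $\Ext^1_\E(C,K)=0$, so it splits; feeding a splitting $u\colon C\to\ker(r)\hookrightarrow D$ together with the section $B\to D$ of $r$ back into the diagram yields a map $s\colon B\to E$, and one verifies $\bar p s=\id_B$ and $si=a$ by post-composing with $r$ and with the cokernel projection $D\to C$ respectively.

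\emph{A map with the lifting property is such a cofibration.} Let $f\colon A\to B$ have the left lifting property against all acyclic fibrations. By completeness of $(\C,\C^\bot)$, factor $f$ as a cofibration $j\colon A\cof W$ followed by an acyclic fibration $q\colon W\twoheadrightarrow B$ (Hovey's factorization, which uses only completeness). Applying the lifting property of $f$ against $q$ to the square with $j$ on top and $\id_B$ on the bottom gives $t\colon B\to W$ with $tf=j$ and $qt=\id_B$, which exhibits $f$ as a retract of $j$ in the category of arrows. Since admissible monomorphisms are closed under retracts (a standard fact about exact categories), $f$ is an admissible monomorphism, so $\cok(f)$ exists and is a retract of $\cok(j)\in\C$; and the left class of a cotorsion pair is closed under retracts --- indeed $\id_{\Ext^1_\E(\cok(f),N)}$ factors through $\Ext^1_\E(\cok(j),N)=0$ for every $N\in\C^\bot$, so $\cok(f)\in{}^\bot(\C^\bot)=\C$. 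Hence $f$ is an admissible monomorphism with cokernel in $\C$.

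\emph{Main obstacle.} The one substantive point is the $\Ext^1_\E$-vanishing used in the first implication; everything else is formal, provided one is careful with the exact-category bookkeeping --- that pushouts along admissible monomorphisms and pullbacks along admissible epimorphisms behave as expected, and that the various (co)kernels appearing (of $r$, of $\iota$, of $a$) are admissible and assemble into genuine short exact sequences. As the paragraph opening this section remarks, one may alternatively transcribe Hovey's proof of \cite[Prop.\ 4.2]{Hov02} essentially verbatim, checking that it invokes only the structure available here.
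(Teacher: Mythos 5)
The paper offers no proof of this statement: it is quoted from Hovey with the blanket remark that his arguments carry over, so the relevant comparison is with Hovey's own proof of \cite[Prop.\ 4.2]{Hov02}, and your argument is indeed that standard one. Your first implication is correct and complete in substance: reducing to a section of the pulled-back epimorphism, pushing out along $a$, identifying $\ker(r)$ with $\cok(a)$ (which lies in $\E$ because it is an extension of $C$ by $K$, so the split epimorphism $r$ really is admissible), and splitting the resulting class in $\Ext^1_\E(C,K)=0$. Only your closing sentence is slightly off: one checks $si=a$ by post-composing with the monomorphism $\iota$, not with the cokernel projection $D\to C$ (the latter is what shows the corrected map $\kappa+u\pi$ factors through $E$ at all).

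The converse direction contains a genuine gap. You assert that ``admissible monomorphisms are closed under retracts (a standard fact about exact categories).'' This is false for general exact categories: it is equivalent to a weak idempotent completeness condition (see B\"uhler's discussion around his Proposition 7.6; the extension-closed subcategory of free abelian groups of even rank is the standard non-example). Concretely, the retract diagram only shows that $\cok_\A(f)$ is a direct summand of $\cok(j)$ in the ambient abelian category; without weak idempotent completeness there is no reason for $\cok_\A(f)$ to be an object of $\E$ at all, so neither ``$f$ is an admissible mono'' nor ``$\Ext^1_\E(\cok(f),N)$ makes sense'' is available. This is precisely the point where Hovey's abelian-category proof does not transport for free (the literature on exact model structures assumes weak idempotent completeness for exactly this step), so the gap is arguably inherited from the paper's citation rather than introduced by you --- but as a self-contained proof it needs either that extra hypothesis or a different argument. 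A second, smaller scoping issue: the factorization you invoke (the paper's Proposition \ref{factorization}) is stated only for maps between objects of $\C$, and the standard construction produces a first factor with cokernel an extension of an object of $\C$ by the target, so your retract argument as written establishes the converse only for maps with target in $\C$; that happens to be all the paper ever uses, but you should say so.
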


\begin{prop}\label{factorization}\cite[Prop.\ 5.4]{Hov02}
 Every map $f\colon A\to B$ between objects of $\C$ can be factored as $f=pi$, where $i$ is a cofibration and $p$ an acyclic fibration.
 \end{prop}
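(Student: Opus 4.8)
The plan is to construct the factorization by hand from the completeness of the cotorsion pair $(\C,\C^\bot)$, after a ``graph trick'' that reduces us to pushing out an admissible monomorphism; note that no small object argument is available, so completeness must play that role. First I would observe that for any $f\colon A\to B$ in $\C$ the graph map $\Gamma_f\colon A\to A\oplus B$, $a\mapsto(a,f(a))$, is a split admissible monomorphism with cokernel $B\in\C$, hence a cofibration, and that $f=\mathrm{pr}_B\circ\Gamma_f$, where $\mathrm{pr}_B\colon A\oplus B\to B$ is the projection, a split admissible epimorphism with kernel $A\oplus 0\cong A$. By Lemma \ref{compositions} it therefore suffices to factor $\mathrm{pr}_B$ as a cofibration followed by an acyclic fibration, and then precompose the cofibration part with $\Gamma_f$.

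To factor $\mathrm{pr}_B$, apply completeness of $(\C,\C^\bot)$ to the object $A$ to obtain a short exact sequence $0\to A\xrightarrow{s}J\to P_J\to 0$ with $J\in\C^\bot$ and $P_J\in\C$, and form the pushout $C$ of the kernel inclusion $A\hookrightarrow A\oplus B$ of $\mathrm{pr}_B$ along $s$. Using the standard fact that in an exact category the pushout of an admissible monomorphism along any morphism is an admissible monomorphism with the same cokernel, the leg $A\oplus B\to C$ is an admissible monomorphism with cokernel $P_J\in\C$, so it is a cofibration, and $C\in\C$ since $C$ is an extension of $P_J$ by $A\oplus B$; meanwhile the leg $J\to C$ is an admissible monomorphism with cokernel $\cong B$. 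The universal property of the pushout produces $g\colon C\to B$ restricting to $\mathrm{pr}_B$ on $A\oplus B$ and to $0$ on $J$; one checks $g$ is precisely the cokernel map of $J\to C$, hence an admissible epimorphism with kernel $J\in\C^\bot$, i.e.\ an acyclic fibration, and $g$ composed with $A\oplus B\to C$ equals $\mathrm{pr}_B$.

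Putting this together, set $i\coloneqq(A\oplus B\to C)\circ\Gamma_f\colon A\to C$ and $p\coloneqq g$: then $i$ is a composite of two cofibrations, hence a cofibration by Lemma \ref{compositions}, $p$ is an acyclic fibration, and $pi=\mathrm{pr}_B\circ\Gamma_f=f$. I expect the only delicate part to be the exact-category bookkeeping around the pushout — verifying that its two legs are admissible monomorphisms with the stated cokernels and that $g$ is indeed the induced admissible epimorphism onto $C/J\cong B$ — which is routine but carries all the content, and which is also exactly where the completeness hypothesis is consumed, through the resolution $0\to A\to J\to P_J\to 0$ with $J\in\C^\bot$ and $P_J\in\C$.
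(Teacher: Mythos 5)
Your argument is correct, and it is essentially the proof of \cite[Prop.\ 5.4]{Hov02} that the paper cites rather than reproves: since $\iota_A\colon A\to A\oplus B$ is a split admissible monomorphism, your pushout is canonically $J\oplus B$, the composite cofibration $i$ unwinds to $a\mapsto (s(a),f(a))$, and $p=g$ is the projection onto $B$ — exactly Hovey's factorization built from the resolution $0\to A\to J\to P_J\to 0$ with $J\in\C^\bot$ and $P_J\in\C$. You also correctly verify the one point that the later Remark~\ref{epis_in_C} depends on, namely that the middle object of the factorization lies in $\C$.
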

\begin{rmk}\label{epis_in_C}
Let $f\colon A\to B$ be a map between objects of $\C$, and $A\xrightarrow{i} C \xrightarrow{p} B$ a factorization of $f$ as above. 
Then the short exact sequence
\[\btk 0\rar & \ker p\rar & C\rar["p"] & B \rar & 0,\etk\] together with the fact that $\C$ is hereditary, imply that $\ker p\in\C$. Therefore, when factoring morphisms in $\C$, we can always assume that $p$ is an acyclic fibration which is furthermore an admissible epimorphism in $\C$.
\end{rmk}

Finally, the last three results involve the structure determined by the cotorsion pair, together with the category $\Z$. Note that we only require $\Z\cap\C$ to be closed under extensions and cokernels of admissible monomorphisms, while Hovey asks these properties of $\Z$ (denoted by $\mathcal{W}$ in his paper), and, in addition, that $\Z$ also be closed under retracts and kernels of admissible epimorphisms. Because of these missing properties, some of Hovey's results will no longer hold in our general setting; notably, we won't have 2-out-of-3 for the class of weak equivalences. However, the following still hold.

\begin{lemma}\label{we_cofib}\cite[Lemma 5.8]{Hov02}
A map is an acyclic cofibration if and only if it is a cofibration and a weak equivalence. Similarly, an $\E$-admissible epimorphism between objects of $\C$ with kernel in $\Z$ is a weak equivalence.
\end{lemma}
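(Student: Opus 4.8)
The plan is to reduce both assertions to a single short-exact-sequence manipulation, using the heredity of $(\C,\C^\bot)$ to keep kernels of acyclic fibrations inside $\C$, and then invoking the (weak) closure hypothesis on $\Z\cap\C$. First I would dispatch the easy implication of the equivalence ``acyclic cofibration $\iff$ cofibration and weak equivalence'': an acyclic cofibration $f$ has cokernel in $\C\cap\Z\subseteq\C$, so it is a cofibration, and $f=\id_B\circ f$ exhibits it as an acyclic cofibration followed by the acyclic fibration $\id_B$ (whose kernel $0$ lies in $\C^\bot$), so it is a weak equivalence.

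For the converse, I would take $f\colon A\to B$ a cofibration that is also a weak equivalence and fix a factorization $f=pi$ with $i\colon A\cof D$ an acyclic cofibration and $p\colon D\to B$ an acyclic fibration. Then $D\in\C$, being the codomain of the $\C$-morphism $i$; and since $p$ is an $\E$-admissible epimorphism between objects of $\C$ and the cotorsion pair is hereditary, $\ker p\in\C$, so $\ker p\in\C^\bot\cap\C\subseteq\Z\cap\C$. Applying the snake lemma to the morphism of short exact sequences with rows $0\to A\xrightarrow{i}D\to\cok i\to 0$ and $0\to A\xrightarrow{f}B\to\cok f\to 0$ and vertical maps $\id_A$, $p$, and the induced map $q\colon\cok i\to\cok f$ (the left square commutes as $pi=f$) produces an exact sequence $0\to\ker p\to\cok i\xrightarrow{q}\cok f\to 0$. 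Its three terms all lie in $\C$ ($\cok f\in\C$ since $f$ is a cofibration), so it is a short exact sequence in $\C$; as $\ker p,\cok i\in\Z\cap\C$ and $\Z\cap\C$ is closed under cokernels of admissible monomorphisms in $\C$, I conclude $\cok f\in\Z\cap\C$, i.e.\ $f$ is an acyclic cofibration.

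For the second statement, I would start with $f\colon A\to B$ an $\E$-admissible epimorphism between objects of $\C$ with $K\coloneqq\ker f\in\Z$; heredity gives $K\in\C$, hence $K\in\Z\cap\C$. Using Proposition~\ref{factorization} I write $f=pi$ with $i\colon A\cof D$ a cofibration and $p\colon D\to B$ an acyclic fibration; as before $D\in\C$ and $\ker p\in\Z\cap\C$. Since $f=pi$ is an epimorphism, in an ambient abelian category the subobjects $i(A)$ and $\ker p$ of $D$ satisfy $i(A)+\ker p=D$ and $i(A)\cap\ker p=i(K)$, so the second isomorphism theorem yields a short exact sequence $0\to K\to\ker p\to\cok i\to 0$ (equivalently, apply the snake lemma to the comparison of the row $0\to A\xrightarrow{i}D\to\cok i\to 0$ with the row $0\to B\xrightarrow{\id}B\to 0\to 0$). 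Again every term lies in $\C$ and the first two lie in $\Z\cap\C$, so $\cok i\in\Z\cap\C$; thus $i$ is an acyclic cofibration and $f=pi$ is a weak equivalence.

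The one delicate point will be the membership bookkeeping: one must verify that $\ker p$ and $\cok i$ genuinely land in $\C$ — which is precisely where completeness and heredity of $(\C,\C^\bot)$ are used — so that the closure of $\Z\cap\C$ under cokernels of admissible monomorphisms \emph{in $\C$} is enough, even though we do not assume the stronger closure properties Hovey imposes on $\mathcal W$. Everything else is routine diagram chasing.
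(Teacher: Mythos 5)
Your proof is correct. Note that the paper itself gives no argument for this lemma: it simply cites Hovey's Lemma 5.8 and asserts (in the preamble to Section 3) that Hovey's proof goes through under the weaker hypotheses on $\Z\cap\C$. Your write-up is precisely the verification the paper leaves to the reader, and you have chosen the right route: extracting the short exact sequences $0\to\ker p\to\cok i\to\cok f\to 0$ (resp.\ $0\to\ker f\to\ker p\to\cok i\to 0$) from the snake lemma and then invoking heredity (Remark \ref{epis_in_C}) to place all terms in $\C$, so that closure of $\Z\cap\C$ under cokernels of admissible monomorphisms \emph{in $\C$} applies. This matters because the other standard proof of the forward implication --- lift $i$ against $p$ to exhibit $f$ as a retract of $i$, then conclude $\cok f$ is a direct summand of $\cok i$ --- would require $\Z\cap\C$ to be closed under retracts, which this paper deliberately does not assume. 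Your membership bookkeeping ($\ker p\in\C^\bot\cap\C\subseteq\Z\cap\C$, $\cok i\in\Z\cap\C$, $\cok f\in\C$) is exactly right, and the easy direction (identity as an acyclic fibration with kernel $0\in\C^\bot$) is handled correctly as well.
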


\begin{prop}\label{we}\cite[Prop.\ 5.6]{Hov02}
Weak equivalences are closed under composition.
\end{prop}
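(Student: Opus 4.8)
The plan is to reduce the claim to the single case of composing an \emph{acyclic fibration followed by an acyclic cofibration}, and then to treat that case by a pullback that identifies all of the relevant cokernels as lying in $\Z\cap\C$, using only the (weak) closure properties we have assumed of $\Z\cap\C$. Three preliminary closure facts are needed. Composites of cofibrations are cofibrations and composites of acyclic fibrations are acyclic fibrations, by Lemma~\ref{compositions}. Composites of acyclic cofibrations are acyclic cofibrations: if $A\cof A'\cof A''$ are admissible monomorphisms with cokernels $W_1,W_2\in\C\cap\Z$, their composite is an admissible monomorphism whose cokernel fits in a short exact sequence $0\to W_1\to W\to W_2\to 0$ of objects of $\C$, and $\C\cap\Z$ is closed under extensions. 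Finally, by hereditariness (cf.\ Remark~\ref{epis_in_C}) every acyclic fibration between objects of $\C$ has kernel in $\C^\bot\cap\C$, and since $\C^\bot\subseteq\Z$, such a map is in particular a weak equivalence (see also Lemma~\ref{we_cofib}).

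Now let $f\colon A\to B$ and $g\colon B\to C$ be weak equivalences, with factorizations $f=p_1i_1$ and $g=p_2i_2$ where $i_1\colon A\cof A_1$, $i_2\colon B\cof B_1$ are acyclic cofibrations and $p_1\colon A_1\twoheadrightarrow B$, $p_2\colon B_1\twoheadrightarrow C$ are acyclic fibrations. Applying Proposition~\ref{factorization} and Remark~\ref{epis_in_C} to $i_2p_1\colon A_1\to B_1$, write $i_2p_1=p_3i_3$ with $i_3\colon A_1\cof E$ a cofibration and $p_3\colon E\twoheadrightarrow B_1$ an acyclic fibration that is an admissible epimorphism in $\C$; set $L\coloneqq\ker p_3\in\C^\bot\cap\C$. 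Then
\[ gf=p_2i_2p_1i_1=(p_2p_3)(i_3i_1), \]
where $p_2p_3$ is an acyclic fibration and $i_3i_1$ is a cofibration. So it suffices to prove that $i_3$ is an acyclic cofibration: then $i_3i_1$ is an acyclic cofibration and the displayed factorization exhibits $gf$ as a weak equivalence. Since $\cok(i_3)\in\C$ automatically, the task is to show $\cok(i_3)\in\Z$.

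To analyze $i_3$, observe that $i_2p_1$ factors through the admissible monomorphism $i_2\colon B\cof B_1$, so we may form the pullback $P\coloneqq E\times_{B_1}B$: the projection $\iota\colon P\to E$ is an admissible monomorphism with cokernel $W\coloneqq\cok(i_2)\in\C\cap\Z$, the projection $\rho\colon P\twoheadrightarrow B$ is an admissible epimorphism with $\ker\rho\cong L$, and $P\in\C$ since it is an extension of $B$ by $L$. The universal property yields $j'\colon A_1\to P$ with $\iota j'=i_3$ and $\rho j'=p_1$; as $\iota j'=i_3$ is an admissible monomorphism, so is $j'$ (by the usual cancellation property of exact categories, \cite{Buh10}). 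Comparing the short exact sequences $0\to\ker p_1\to A_1\xrightarrow{p_1}B\to 0$ and $0\to L\to P\xrightarrow{\rho}B\to 0$ along $(\,\cdot\,,j',\id_B)$ and applying the snake lemma in the ambient abelian category, the induced map $\ker p_1\to L$ is a monomorphism with $\cok(\ker p_1\to L)\cong\cok(j')\in\C$; hence $\ker p_1\cof L$ is an admissible monomorphism in $\C$ with cokernel $\cok(j')$. Now $\ker p_1$ and $L$ both lie in $\C^\bot\cap\C\subseteq\Z\cap\C$, so $\cok(j')\in\Z\cap\C$ because $\Z\cap\C$ is closed under cokernels of admissible monomorphisms. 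Finally, from $A_1\xrightarrow{j'}P\xrightarrow{\iota}E$ we obtain a short exact sequence $0\to\cok(j')\to\cok(i_3)\to W\to 0$ of objects of $\C$, so $\cok(i_3)\in\Z\cap\C$ by closure under extensions. Thus $i_3$ is an acyclic cofibration, completing the proof.

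The main obstacle is exactly the failure of 2-out-of-3 for weak equivalences flagged before Lemma~\ref{we_cofib}, which blocks the usual model-category argument; the pullback $P=E\times_{B_1}B$ is the device that lets us rewrite $\cok(i_3)$ in terms of objects of $\C^\bot\subseteq\Z$ and then push everything through the weak closure hypotheses on $\Z\cap\C$. A secondary point to be careful about is ensuring each of the objects and sequences produced (notably $P$, $j'$, and $\ker p_1\to L$) genuinely lives in $\C$, which is where hereditariness and closure of $\C$ under extensions get used repeatedly.
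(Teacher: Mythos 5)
Your overall reduction is sound and follows the same route as the proof the paper points to (Hovey's Prop.\ 5.6): composites of acyclic cofibrations and of acyclic fibrations are dealt with directly, so the whole content is in the middle composite $i_2p_1$, which you factor as $p_3i_3$ and for which you must show $\cok(i_3)\in\Z$. The pullback $P=E\times_{B_1}B$ is a fine device for that. The gap is the sentence ``as $\iota j'=i_3$ is an admissible monomorphism, so is $j'$ (by the usual cancellation property of exact categories).'' That cancellation statement --- $gf$ admissible monic implies $f$ admissible monic --- is B\"uhler's Prop.\ 7.6 and is valid only in \emph{weakly idempotent complete} exact categories; the version that holds in general (the ``obscure axiom,'' Prop.\ 2.16 of \cite{Buh10}) presupposes that $j'$ admits a cokernel in the exact category, which is precisely what is at stake. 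The paper nowhere assumes weak idempotent completeness, and everything downstream depends on this point: you need $\cok(j')\in\C$ in order for $\ker p_1\to L$ to be an admissible mono \emph{in $\C$} (so that closure of $\Z\cap\C$ under cokernels of admissible monomorphisms applies) and for $0\to\cok(j')\to\cok(i_3)\to W\to 0$ to be a sequence \emph{in $\C$}. The circularity is genuine: the natural way to get $\cok(j')\in\C$ is from that last sequence via hereditariness, but the sequence is only admissible in $\E$ once its kernel term is already known to lie in $\E$. (In Hovey's abelian setting every mono is admissible, so this issue is invisible there; it is exactly the point where the exact-category version needs extra care.)

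The gap is repairable by not treating Proposition \ref{factorization} as a black box: build the factorization of $i_2p_1$ explicitly as in its proof. Choose $0\to A_1\xrightarrow{\alpha}I\to P'\to 0$ with $I\in\C^\bot$ and $P'\in\C$ (completeness); then $I\in\C^\bot\cap\C$ by closure of $\C$ under extensions, and one may take $i_3=(i_2p_1,\alpha)\colon A_1\cof B_1\oplus I$ and $p_3=\mathrm{pr}_{B_1}$. With this choice your $j'$ is literally $(p_1,\alpha)\colon A_1\to B\oplus I$, whose cokernel is an extension of $\cok\alpha=P'\in\C$ by $B\in\C$ and hence lies in $\C$; it is therefore an honest cofibration, and your snake-lemma identification $\cok(j')\cong\cok(\ker p_1\to I)$ together with the two closure hypotheses on $\Z\cap\C$ then goes through verbatim. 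With that repair (or with weak idempotent completeness added to the standing hypotheses, validating your citation as written) the argument is correct.
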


\begin{prop}\label{2-out-of-3}\cite[Lemmas 5.9, 5.10]{Hov02}
Let $f$ and $g$ be two composable maps in $\C$ such that $g$ and $gf$ are weak equivalences. Then $f$ is also a weak equivalence, in the following cases:
\begin{itemize}
\item $g$ and $gf$ are acyclic fibrations, or
\item $g$ is an acyclic fibration and $gf$ is an acyclic cofibration.
\end{itemize}
\end{prop}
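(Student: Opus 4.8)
The plan is to handle both cases by a single construction: factor $f$ through the pullback of $g$ along $gf$. Writing $f\colon A\to B$ and $g\colon B\to C$, I would form the pullback square
\[\btk
P\rar["\pi_B"]\dar["\pi_A"] & B\dar["g"]\\
A\rar["gf"] & C
\etk\]
which exists because $g$ is an admissible epimorphism; then $\pi_A$ is an admissible epimorphism with $\ker\pi_A\cong\ker g$, and $(\id_A,f)$ induces a section $s\colon A\to P$ of $\pi_A$ satisfying $\pi_B s=f$. Since $g$ is an acyclic fibration, $\ker g\in\C^\bot$; since $(\C,\C^\bot)$ is hereditary and $B,C\in\C$, also $\ker g\in\C$, whence $P\in\C$ by closure of $\C$ under extensions and $\ker g\in\C^\bot\cap\C\subseteq\Z\cap\C$. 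As a section of an admissible epimorphism, $s$ splits $0\to\ker g\to P\to A\to 0$, so $s$ is an admissible monomorphism between objects of $\C$ with cokernel isomorphic to $\ker g\in\C\cap\Z$; that is, $s$ is an acyclic cofibration, hence a weak equivalence by Lemma~\ref{we_cofib}. Since $f=\pi_B s$, it then remains to show that $\pi_B$ is a weak equivalence, and here the two cases diverge.

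If $g$ and $gf$ are both acyclic fibrations, then $\pi_B$, being a pullback of the admissible epimorphism $gf$, is itself an admissible epimorphism, with $\ker\pi_B\cong\ker(gf)\in\C^\bot\subseteq\Z$; as $P,B\in\C$, the second assertion of Lemma~\ref{we_cofib} makes $\pi_B$ a weak equivalence. If instead $g$ is an acyclic fibration and $gf$ an acyclic cofibration, then $\pi_B$ is a pullback of the admissible monomorphism $gf$ along the admissible epimorphism $g$, hence an admissible monomorphism; moreover $g$, being an admissible epimorphism, is surjective in an ambient abelian category, so $\cok(\pi_B)\cong\cok(gf)\in\C\cap\Z$, with $0\to P\to B\to\cok(gf)\to 0$ admissible in $\C$ by axiom (E1). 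Thus $\pi_B$ is an acyclic cofibration, again a weak equivalence. In either case $f=\pi_B s$ is a composite of weak equivalences, so it is a weak equivalence by Proposition~\ref{we}.

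I expect the only genuine subtlety, as opposed to formal bookkeeping with the distinguished classes of maps, to be ensuring that the exact sequences produced by the pullback are \emph{admissible sequences in $\C$} rather than merely exact in the ambient abelian hull: namely that $P$ itself lies in $\C$ — which uses heredity of the cotorsion pair together with closure of $\C$ under extensions — and, in the second case, that the sequence with cokernel $\cok(gf)$ is admissible in $\C$, which uses (E1) and the fact that all three of its terms lie in $\C$. The hypothesis $\C^\bot\subseteq\Z$ is exactly what turns the kernel computations above into statements about acyclic (co)fibrations; the rest is formal.
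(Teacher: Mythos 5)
Your proof is correct. Note that the paper does not actually prove this proposition itself; it is imported from Hovey's Lemmas 5.9 and 5.10, whose arguments run through the lifting/factorization apparatus attached to the cotorsion pair (Propositions \ref{liftings} and \ref{factorization}). Your route --- factoring $f=\pi_B\circ s$ through the graph-like pullback $P=A\times_C B$, with $s$ the section induced by $(\id_A,f)$ --- is genuinely different and more economical: it needs only the existence of pullbacks along admissible epimorphisms, heredity (to place $\ker g$, and hence $P$, in $\C$), closure of $\C$ under extensions, the hypothesis $\C^\bot\subseteq\Z$, and the previously established Lemma \ref{we_cofib} and Proposition \ref{we}; completeness of the cotorsion pair is never invoked. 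The individual steps all check out: $s$ is a split (hence admissible) monomorphism with cokernel $\ker\pi_A\cong\ker g\in\C^\bot\cap\C\subseteq\Z\cap\C$; in Case 1, $\pi_B$ is an admissible epimorphism with kernel $\ker(gf)\in\C^\bot$ --- in fact you could observe it is already an acyclic fibration, so that $f=\pi_B s$ is an acyclic cofibration followed by an acyclic fibration and hence a weak equivalence directly from the definition, without even needing Proposition \ref{we}; in Case 2, the pullback square is bicartesian in the ambient abelian category because $g$ is an epimorphism, so $0\to P\to B\to\cok(gf)\to 0$ is exact there with all three terms in $\E$, and axiom (E1) makes it admissible, exhibiting $\pi_B$ as an acyclic cofibration. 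Your closing assessment of where the genuine content lies --- admissibility in the exact structure versus mere exactness in the abelian hull --- is accurate.
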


\section{Waldhausen categories from cotorsion pairs}

We now proceed to prove one of our main results, which we state once again, and whose proof consists of the entirety of this section. It explains how to produce a Waldhausen category structure from a cotorsion pair and a chosen subcategory $\Z$ which will form the class of acyclic objects.

\begin{thm}\label{the_thm}
Let $\E$ be an exact category, and $\Z$, $\C$ two full subcategories of $\E$ such that $\Z\cap\C$ is closed under extensions and cokernels of admissible monomorphisms in $\C$, and $\C$ is part of a complete hereditary cotorsion pair $(\C,\C^\bot)$. Assume also that  $\C^\bot\subseteq\Z$. 

Then $\C$ admits a Waldhausen category structure $(\C,w_\Z)$, with the $\C$-admissible monomorphisms as the cofibrations, and with the weak equivalences as the morphisms that factor as a $\C$-admissible monomorphism with cokernel in  $\C\cap\Z$ followed by a $\C$-admissible epimorphism with kernel in $\C^\bot$.
\end{thm}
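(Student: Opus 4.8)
The plan is to check the Waldhausen axioms (C1)--(C3), (W1), (W2) for $\C$ with the $\C$-admissible monomorphisms as cofibrations and $w_\Z$ as weak equivalences. I first record that $\C$ is an exact subcategory of $\E$ (closed under extensions, being half of a cotorsion pair) with zero object (since $0$ lies in $\C$, in $\C^\bot$, and hence in $\C\cap\Z$), and that by Remark~\ref{epis_in_C} the class $w_\Z$ agrees with the class of maps factoring as an acyclic cofibration followed by an acyclic fibration, so every result of Section~3 applies. Axioms (C1), (C2) are immediate, since isomorphisms and the maps $0\to A$ are admissible monomorphisms with cokernel in $\C$. For (C3), admissible monomorphisms are stable under cobase change in $\E$ with cokernel preserved, and $\C$ is closed under extensions, so the pushout of a cofibration along any map of $\C$ again lies in $\C$ and the new leg is a cofibration; the same computation, with cokernel now in $\C\cap\Z$, shows that a cobase change of an \emph{acyclic} cofibration along any map of $\C$ is again an acyclic cofibration (call this fact $(\ast)$). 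Axiom (W1) follows by writing an isomorphism as $\id$ (an acyclic cofibration, cokernel $0\in\C\cap\Z$) followed by itself (an acyclic fibration, kernel $0\in\C^\bot$).

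The substance of the theorem is the Gluing Lemma (W2), and here the absence of two-out-of-three (flagged in Section~3) is the main obstacle, so I plan first to settle two special cases of a Gluing square with comparison map $\phi\colon B+_AC\to B'+_{A'}C'$. If all three vertical maps are acyclic cofibrations, then $\phi$ factors as $B+_AC\to B+_AC'\to B'+_AC'\to B'+_{A'}C'$, the first two maps being cobase changes of $c$ and of $b$, hence acyclic cofibrations by $(\ast)$, and the last being an $\E$-admissible epimorphism with kernel $\cok(a)\in\C\cap\Z$ (identify $B'+_AC'\cong(B'\oplus C')/A$ and $B'+_{A'}C'\cong(B'\oplus C')/A'$ in the ambient abelian category), hence a weak equivalence by Lemma~\ref{we_cofib}; so $\phi$ is a weak equivalence by Proposition~\ref{we}. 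If all three vertical maps are $\E$-admissible epimorphisms with kernel in $\C^\bot$, then applying the snake lemma in the ambient abelian category to the map of conflations $(0\to A\to B\oplus C\to B+_AC\to 0)\to(0\to A'\to B'\oplus C'\to B'+_{A'}C'\to 0)$, together with axioms (E1)--(E2) and the heredity of $(\C,\C^\bot)$ (so that $\C^\bot$ is closed under cokernels of admissible monomorphisms), shows $\phi$ is an $\E$-admissible epimorphism whose kernel is an extension of two objects of $\C^\bot$, hence lies in $\C^\bot\subseteq\Z$, so again a weak equivalence by Lemma~\ref{we_cofib}.

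For a general Gluing square I would reduce to these two cases by a threefold replacement: factor $c=c_fc_c$, $b=b_fb_c$, $a=a_fa_c$ as acyclic cofibrations followed by acyclic fibrations, the latter taken $\C$-admissible via Remark~\ref{epis_in_C}, and successively replace $C$ by the factorization object $C_1$, then $B$ by $B_1$, then $A$ by $A_1$. The first two replacements alter the source pushout only by a cobase change of an acyclic cofibration, so by $(\ast)$ and Proposition~\ref{we} it suffices to treat the square with verticals $(c_f,a,b_f)$, and then its analogue with verticals $(c_f,a_f,b_f)$. To carry out the third replacement one must produce compatible lifts $A_1\to C_1$ and $A_1\to B_1$ using Proposition~\ref{liftings}, factor the latter as $A_1\cof B_2\twoheadrightarrow B_1$ and absorb the acyclic fibration into $b_f$ (so that the remaining square, with verticals all $\C$-admissible epimorphisms with kernel in $\C^\bot$, falls under the second special case), and --- this is the delicate point --- choose, again via Proposition~\ref{liftings}, a section $s\colon B_1\to B_2$ of $B_2\twoheadrightarrow B_1$ making the auxiliary squares commute \emph{strictly}; such a section exists because the obstruction lies in a group of the form $\Ext^1_\E(-,\ker(B_2\twoheadrightarrow B_1))$ with first variable in $\C={}^\bot(\C^\bot)$, and it is automatically an acyclic cofibration, so the resulting comparison map $B_1+_AC_1\to B_2+_{A_1}C_1$ falls under the first special case. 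Splicing all of this with Propositions~\ref{we} and, where needed, \ref{2-out-of-3}, while verifying that every pushout produced along the way stays inside $\C$, then yields that $\phi$ is a weak equivalence. This last bookkeeping --- forced by the absence of functorial factorizations and of full two-out-of-three --- is the part of the argument I expect to require the most care.
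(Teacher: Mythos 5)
Your proposal is correct, and for the hard part --- the Gluing Lemma --- it takes a genuinely different route from the paper. The paper's special case for acyclic cofibrations (Proposition \ref{gluing_for_cofibs}) carries the extra hypothesis that the induced map $\cok\alpha\to\cok\beta$ be a cofibration (needed there to keep the relevant cokernels inside $\C$ so that the closure properties of $\Z\cap\C$ apply), and this extra hypothesis is what forces the paper to develop the cotorsion pair $(\P_\Sp,\I_\Sp)$ on $\Span(\E)$ in the appendix: Theorem \ref{spans} is used precisely to produce \emph{compatible} factorizations of $(\gamma,\alpha,\beta)$ for which the induced maps on cokernels are cofibrations. Your decomposition of $\phi$ as two cobase changes followed by the quotient $B'+_AC'\to B'+_{A'}C'$ with kernel $\cok(a)\in\C\cap\Z$ removes the need for any hypothesis on the cokernel map (at the price of concluding only ``weak equivalence'' rather than ``acyclic cofibration,'' which is all the Gluing Lemma requires), and your sequential replacement of $C$, $B$, $A$ --- with the lifting-theoretic construction of $A_1\to C_1$, $A_1\to B_1$, and the strictly commuting section $s$ via the vanishing of $\Ext^1_\E(\cok(A\cof B_1),\ker p)$ --- replaces the span-category factorization entirely. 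I checked the delicate points: the obstruction argument for $s$ is valid ($\Hom(B_1,\ker p)\to\Hom(A,\ker p)$ is surjective since the cokernel of $A\cof B_1$ lies in $\C={}^\bot(\C^\bot)$), $s$ is indeed an acyclic cofibration with cokernel isomorphic to $\ker p\in\C^\bot\cap\C$, and the composites of span morphisms reassemble to the original square so functoriality of pushouts gives $\phi$ as the advertised composite. What the paper's approach buys is a reusable, conceptual statement (cotorsion pairs lift to spans) at the cost of an appendix; yours is more elementary and self-contained at the cost of bookkeeping. Two cosmetic points: in your second special case the kernel of $\phi$ is the \emph{cokernel} of the admissible monomorphism $\ker a\to\ker b\oplus\ker c$, not an extension (your invocation of heredity of $\C^\bot$ is nevertheless the right one); and the admissibility of $\phi$ as an epimorphism there rests on the cancellation property for admissible epimorphisms, which the paper also uses tacitly in Proposition \ref{gluing_for_fibs}.
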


Before checking the axioms, we must pay attention to a few things. First, note that $\C$ contains the zero object of $\E$, since $\Ext^1_\E(0,A)=0$ for any object $A\in\E$.
Also note that our cofibrations and weak equivalences form subcategories, which is ensured by Lemma \ref{compositions} and Proposition \ref{we}.



We now verify that the axioms of a Waldhausen category are satisfied. As we will see, most of them follow directly from the definitions, and the difficulty arises from the Gluing Lemma.

\begin{lemma}[C1]
Isomorphisms are cofibrations.
\end{lemma}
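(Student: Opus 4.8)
The plan is to unwind the definition of cofibration given at the start of Section 3: a morphism in $\C$ is a cofibration precisely when it is an admissible monomorphism in $\E$ whose cokernel lies in $\C$. So I must check both conditions for an arbitrary isomorphism $f\colon A\we B$ between objects of $\C$.

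First I would note that $f$ is an admissible monomorphism in $\E$: in any exact category the sequence $0\to A\xrightarrow{f} B\to 0\to 0$ is exact (it is exact in any ambient abelian category, since $f$ is an isomorphism), so $f$ is the admissible monomorphism of a short exact sequence in $\C\subseteq\E$. Consequently its cokernel is the zero object. Finally I would invoke the observation recorded just after the statement of Theorem \ref{the_thm}, namely that $0\in\C$ because $\Ext^1_\E(0,A)=0$ for every $A$; hence $\cok f=0\in\C$, and $f$ is a cofibration.

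There is no real obstacle here: the verification is immediate once the definitions are in place, and the only input beyond formal exactness is the already-noted fact that $\C$ contains the zero object.
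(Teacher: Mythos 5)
Your proof is correct and is essentially the paper's own one-line argument, just with the two routine checks (that an isomorphism is an admissible monomorphism, and that its cokernel $0$ lies in $\C$) written out explicitly. Nothing further is needed.
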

\begin{proof}
If $f$ is an isomorphism, then it is an admissible monomorphism with $\cok f=0\in\C$.
\end{proof}

\begin{lemma}[C2]
$0\to A$ is a cofibration for every $A\in\C$.
\end{lemma}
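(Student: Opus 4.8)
The plan is to unwind the definition of cofibration and check the two requirements directly. Recall that a morphism in $\C$ is a cofibration precisely when it is an admissible monomorphism whose cokernel lies in $\C$. So for a fixed object $A\in\C$, I would first exhibit the map $0\to A$ as an admissible monomorphism by pointing to the (split) short exact sequence
\[ 0\to 0\to A\xrightarrow{\id_A} A\to 0, \]
which is exact in the ambient abelian category and hence belongs to the exact structure of $\C$; thus $0\to A$ is indeed an admissible monomorphism with cokernel $A$.

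Next I would observe that its cokernel is $A$ itself, and $A\in\C$ by hypothesis. Combining these two facts, $0\to A$ is an admissible monomorphism with cokernel in $\C$, i.e.\ a cofibration. (One also uses here that $0\in\C$, which was already noted before the verification of the axioms, so that the map $0\to A$ lives in $\C$ in the first place.)

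There is no real obstacle in this step: it is purely a matter of matching the definition of cofibration in $(\C,w_\Z)$ against the trivial exact sequence above. The only point worth recording explicitly is that $\C$ contains the zero object, which has already been established.

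\begin{proof}
Since $0\in\C$, the map $0\to A$ is a morphism in $\C$. It is an admissible monomorphism, as it appears as the first map of the split exact sequence $0\to 0\to A\xrightarrow{\id_A} A\to 0$, and its cokernel is $A\in\C$. Hence $0\to A$ is a cofibration.
\end{proof}
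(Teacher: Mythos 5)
Your proof is correct and matches the paper's argument exactly: the map $0\to A$ is an admissible monomorphism (witnessed by the split exact sequence $0\to 0\to A\to A\to 0$) with cokernel $A\in\C$. The extra remark that $0\in\C$ is a fine precaution, and is indeed noted in the paper just before the axiom verifications.
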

\begin{proof}
 Let $A\in\C$; the morphism $0\to A$ is an admissible monomorphism, and its cokernel is $A\in\C$.
\end{proof}

\begin{lemma}[C3]
Cofibrations are closed under cobase change.
\end{lemma}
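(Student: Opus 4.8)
The statement to prove is axiom (C3): given a cofibration $i\colon A\cof B$ in $\C$ and an arbitrary map $f\colon A\to C$ with $C\in\C$, the pushout $B+_A C$ exists in $\C$ and the induced map $C\to B+_A C$ is again a cofibration. My plan is to first construct the pushout in the ambient exact category $\E$ (where it exists because $i$ is an admissible monomorphism, by the pushout axiom for exact categories) and then argue that the resulting object lies in $\C$ and that the resulting map is a $\C$-admissible monomorphism with cokernel in $\C$.

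\textbf{Step 1: Build the pushout in $\E$.} Since $i\colon A\cof B$ is in particular an admissible monomorphism in $\E$, the pushout $P := B+_A C$ exists in $\E$, the canonical map $j\colon C\to P$ is an admissible monomorphism in $\E$, and — this is the standard fact about pushouts along admissible monomorphisms — the cokernel of $j$ is canonically isomorphic to the cokernel of $i$. Concretely, there is a short exact sequence $0\to C\xrightarrow{j} P\to \cok(i)\to 0$ in $\E$.

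\textbf{Step 2: Check $P\in\C$.} Since $i$ is a \emph{cofibration} in our sense, $\cok(i)\in\C$. We are given $C\in\C$. Now $\C$ is closed under extensions (being the left-hand class of a cotorsion pair, as recorded in the remark following Definition \ref{cotorsion_defn}), so the short exact sequence $0\to C\to P\to\cok(i)\to 0$ forces $P\in\C$. Hence the pushout, formed in $\E$, actually lives in $\C$; and since $\C$ is a full subcategory of $\E$, the square is also a pushout in $\C$.

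\textbf{Step 3: Check $j$ is a cofibration in our sense.} We must show $j\colon C\cof P$ is an admissible monomorphism \emph{in $\C$} (i.e.\ a $\C$-admissible monomorphism) with cokernel in $\C$. Its cokernel is $\cok(i)\in\C$ by Step 1 and the hypothesis on $i$; and the defining short exact sequence $0\to C\to P\to\cok(i)\to 0$ has all three terms in $\C$, so it is an exact sequence in the exact category $\C$, exhibiting $j$ as an admissible monomorphism in $\C$. Therefore $j$ is a cofibration, completing the verification of (C3).

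\textbf{Anticipated main obstacle.} None of the three steps is deep; the only point requiring care is Step 1 — ensuring that the pushout in $\E$ has cokernel isomorphic to $\cok(i)$ and that $j$ is an admissible monomorphism in $\E$. This is exactly the pushout axiom in the definition of an exact category (or, in the ambient-abelian-category formulation used in this paper, a routine diagram chase), so I would simply cite it. The one genuinely substantive input beyond formal nonsense is the closure of $\C$ under extensions, which is why the remark after Definition \ref{cotorsion_defn} was isolated; everything else is bookkeeping, which is consistent with the paper's remark that "most of [the axioms] follow directly from the definitions."
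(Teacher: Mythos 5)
Your proof is correct and follows essentially the same route as the paper's: form the pushout in $\E$, use that admissible monomorphisms are preserved by pushout and that $\cok(C\to B+_A C)\cong\cok i\in\C$, and then invoke closure of $\C$ under extensions to conclude $B+_A C\in\C$. Your Step 3, making explicit that the resulting short exact sequence lies entirely in $\C$ and hence exhibits a $\C$-admissible monomorphism, is a minor elaboration the paper leaves implicit.
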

\begin{proof}
Let $i\colon A\cof B$ be a cofibration and $f\colon A\to C$ any map in $\C$. We know the pushout
\[\btk
 A\rar[hookrightarrow,"i"]\dar["f"'] & B\dar\\
    C\rar & B+_A C
    \etk\]
 exists in $\E$. Moreover, the map $C\to B+_A C$ will be an admissible monomorphism, since these are preserved by pushouts, and furthermore, $$\cok(C\to B+_A C)=\cok i\in\C .$$ Lastly, the short exact sequence
 \[\btk[column sep=small] 0\rar & C\rar & B+_A C\rar & \cok i \rar & 0\etk\] implies that $B+_A C$ is an object of $\C$, since $\C$ is closed under extensions.
\end{proof}

In fact, one can similarly see that the dual result also holds.

\begin{lemma}\label{pullback}
Let $p\colon A\twoheadrightarrow B$ be an acyclic fibration between objects in $\C^\bot$, and $f\colon C\to B$ be any map with $C\in\C^\bot$. Then the pullback $A\times_B C$ is in $\C^\bot$, and $A\times_B C\to C$ is an acyclic fibration.
\end{lemma}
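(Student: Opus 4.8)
The statement is precisely the formal dual of Lemma~[C3] above, so I would mirror that argument. First I would recall that in an exact category admissible epimorphisms are stable under pullback — the dual of the fact, just used, that admissible monomorphisms are stable under pushout. Hence the pullback $A\times_B C$ exists in $\E$ and the projection $\pi\colon A\times_B C\to C$ is an admissible epimorphism.

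Next I would identify the kernel of $\pi$. In the ambient abelian category the pullback square
\[\btk
A\times_B C\rar["\pi"]\dar & C\dar["f"]\\
A\rar["p"'] & B
\etk\]
gives a canonical isomorphism $\ker\pi\cong\ker p$ (the kernel along the opposite leg is carried across by the pullback). Since $p$ is an acyclic fibration we have $\ker p\in\C^\bot$, so $\ker\pi\in\C^\bot$; together with the previous step this shows $\pi$ is an admissible epimorphism with kernel in $\C^\bot$, i.e.\ an acyclic fibration.

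Finally, to obtain $A\times_B C\in\C^\bot$ I would use the short exact sequence
\[\btk[column sep=small] 0\rar & \ker p\rar & A\times_B C\rar["\pi"] & C\rar & 0\etk\]
just produced. Both outer terms lie in $\C^\bot$ (the kernel by the paragraph above, $C$ by hypothesis), and $\C^\bot$, being the right-hand class of a cotorsion pair, is closed under extensions; therefore $A\times_B C\in\C^\bot$.

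I do not expect a genuine obstacle here: the whole argument is dual to (C3). The only points needing a word of care are the identification $\ker\pi\cong\ker p$ and the assertion that the displayed sequence is an honest short exact sequence in $\E$ — which is fine because pullbacks along admissible epimorphisms stay inside $\E$ (equivalently, $\C^\bot\subseteq\E$ is closed under extensions). Note that the hypotheses $A,B\in\C^\bot$ are not actually used in the argument beyond making the phrase ``acyclic fibration between objects of $\C^\bot$'' meaningful as stated.
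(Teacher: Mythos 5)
Your proof is correct and is exactly the argument the paper intends: the paper gives no separate proof of Lemma \ref{pullback}, stating only that it follows dually to the proof of (C3), and your dualization (admissible epimorphisms are stable under pullback, $\ker\pi\cong\ker p$, then extension-closure of $\C^\bot$) is that argument. Your closing observation that only $C\in\C^\bot$ and $\ker p\in\C^\bot$ are really used is also accurate.
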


\begin{lemma}[W1]
Isomorphisms are weak equivalences.
\end{lemma}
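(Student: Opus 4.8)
The plan is to produce the required factorization by hand, since \textup{(W1)} is the least demanding axiom. Given an isomorphism $f\colon A\to B$ in $\C$, I would simply write $f = f\circ\id_A$ and check that this already exhibits $f$ as an acyclic cofibration followed by an acyclic fibration, which by definition makes $f$ a weak equivalence.

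For the first factor: $\id_A$ is an isomorphism, hence an admissible monomorphism, and it fits in the short exact sequence $0\to A\xrightarrow{\id_A} A\to 0\to 0$, so its cokernel is the zero object. I then need $0\in\C\cap\Z$. That $0\in\C$ was already noted before the verification of the axioms (as $\Ext^1_\E(0,-)=0$), and since $\Ext^1_\E(-,0)=0$ we also have $0\in\C^\bot$, whence $0\in\Z$ by the hypothesis $\C^\bot\subseteq\Z$. Thus $\id_A$ is an acyclic cofibration. For the second factor: $f$ is an isomorphism, hence an admissible epimorphism, sitting in the short exact sequence $0\to 0\to A\xrightarrow{f} B\to 0$, so its kernel is $0\in\C^\bot$, i.e.\ $f$ is an acyclic fibration. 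Composing, $f=f\circ\id_A$ is a weak equivalence.

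There is no genuine obstacle here; the argument is purely formal and the only ingredients are that the zero object lies in $\C$, in $\C^\bot$, and hence in $\Z$, together with the elementary observation that any isomorphism is simultaneously an admissible monomorphism (with trivial cokernel) and an admissible epimorphism (with trivial kernel). One could equally well use the factorization $f=\id_B\circ f$; both work, and there is nothing to optimize.
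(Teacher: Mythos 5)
Your proof is correct and is essentially the paper's own argument: the paper factors $f$ trivially as $f = 1_B\circ f$, while you use $f = f\circ\id_A$, which you yourself note is an interchangeable choice. The extra verification that $0\in\C\cap\Z$ and $0\in\C^\bot$ is accurate and just makes explicit what the paper leaves implicit.
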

\begin{proof}
  An isomorphism $f\colon A\to B$ factors trivially as $f=1_B f$.
\end{proof}

We now show two special instances of the Gluing Lemma (W2).

\begin{prop}\label{gluing_for_cofibs}
Consider a commutative diagram in $\C$
\[\btk
C\dar[hookrightarrow,"\gamma", "\sim"'] & A\lar["j"']\rar[hookrightarrow,"i"]\dar[hookrightarrow,"\alpha","\sim"'] & B\dar[hookrightarrow,"\beta","\sim"']\\
C' & A'\lar["j'"]\rar[hookrightarrow,"i'"'] & B'
\etk\] such that the induced map $\hat{i}\colon\cok\alpha\to\cok\beta$ is a cofibration. Then the map on pushouts $\phi\colon B+_A C\to B'+_{A'}C'$ is an acyclic cofibration.
\end{prop}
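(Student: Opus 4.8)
The plan is to reduce the statement to two verifications: that the pushout map $\phi$ is a cofibration (an admissible monomorphism with cokernel in $\C$), and that it is moreover a weak equivalence, so that Lemma \ref{we_cofib} lets us conclude it is an acyclic cofibration. I would handle the cofibration part first. The key observation is that cokernels interact well with pushouts: from the commutative diagram I would extract a map of short exact sequences in $\C$, namely from $0\to A\to B+_A C\to \cok i \oplus_{?} \cdots$ — more precisely, I would use that $\cok(C\to B+_A C)\cong\cok(A\to B)=\cok i$ and likewise $\cok(C'\to B'+_{A'}C')\cong\cok i'$. Combining this with the snake-type comparison, the cokernel $\cok\phi$ sits in a short exact sequence built out of $\cok\gamma$ and $\cok(\cok i\to\cok i')$, the latter being $\cok\hat i$. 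Since $\gamma$ is an acyclic cofibration, $\cok\gamma\in\C\cap\Z$, in particular $\cok\gamma\in\C$; and $\hat i$ is assumed a cofibration, so $\cok\hat i\in\C$. As $\C$ is closed under extensions, $\cok\phi\in\C$. That $\phi$ is an admissible monomorphism follows from stability of admissible monomorphisms under pushout together with the fact that all the maps in sight are admissible monos; one must be slightly careful and may want to realize $B+_A C\cof B'+_{A'}C'$ as a composite or as a pushout of an admissible mono. So $\phi$ is a cofibration.

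Next I would show $\phi$ is a weak equivalence. Here I want to exhibit $\cok\phi$ as an object of $\C\cap\Z$, because then $\phi$ is an admissible monomorphism with cokernel in $\C\cap\Z$, i.e.\ an acyclic cofibration by definition, which in particular is a weak equivalence — and then Lemma \ref{we_cofib} is not even needed. We already know $\cok\phi\in\C$; it remains to see $\cok\phi\in\Z$, equivalently $\cok\phi\in\C\cap\Z$. From the short exact sequence $0\to\cok\gamma\to\cok\phi\to\cok\hat i\to 0$ in $\C$ (or whichever orientation the comparison actually produces), and using that $\C\cap\Z$ is closed under extensions, it suffices to know $\cok\gamma\in\C\cap\Z$ and $\cok\hat i\in\C\cap\Z$. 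The first holds since $\gamma$ is an acyclic cofibration. For the second: $\hat i$ is the induced map $\cok\alpha\to\cok\beta$, and since $\alpha$ and $\beta$ are acyclic cofibrations, $\cok\alpha,\cok\beta\in\C\cap\Z$; as $\hat i$ is assumed a cofibration, $\cok\hat i$ is the cokernel of an admissible monomorphism between objects of $\C\cap\Z$, hence lies in $\C\cap\Z$ because this class is closed under cokernels of $\C$-admissible monomorphisms (this is exactly the hypothesis on $\Z\cap\C$ in Theorem \ref{the_thm}). Therefore $\cok\phi\in\C\cap\Z$, and $\phi$ is an acyclic cofibration.

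The main obstacle I anticipate is the bookkeeping in the first paragraph: correctly identifying the short exact sequence relating $\cok\phi$, $\cok\gamma$, and $\cok\hat i$, and checking its exactness in the ambient abelian category. One clean way to organize this is to factor the comparison map $\phi$ through an intermediate pushout — e.g.\ first replace $C$ by $C'$ (using $\gamma$) to get $B+_A C\cof B+_A C'$ with cokernel $\cok\gamma$, then handle the change from $(A,B)$ to $(A',B')$ over the fixed $C'$, whose cokernel computation reduces to $\cok\hat i$ by a pushout-of-pushouts argument. This staged approach avoids a direct $3\times 3$ diagram chase and makes each cokernel identification a single application of the fact that cokernels commute with pushouts along admissible monomorphisms. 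Once the two short exact sequences are in hand, everything else is a direct invocation of the closure properties of $\C$ and $\C\cap\Z$ and the definition of acyclic cofibration.
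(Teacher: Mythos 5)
Your overall strategy is exactly the paper's: show that $\phi$ is an admissible monomorphism whose cokernel is an extension of $\cok\hat i$ by $\cok\gamma$, get $\cok\hat i\in\C$ from $\hat i$ being a cofibration and $\cok\hat i\in\Z$ from closure of $\Z\cap\C$ under cokernels of admissible monomorphisms (applied to $\hat i\colon\cok\alpha\to\cok\beta$, whose source and target lie in $\C\cap\Z$ because $\alpha,\beta$ are acyclic cofibrations), and conclude by closure under extensions. The paper organizes the diagram chase as two applications of the Snake Lemma to the front and back faces of a cube of short exact sequences, yielding $0\to\ker\hat\beta\to\cok\alpha\xrightarrow{\hat i}\cok\beta\to\cok\hat\beta\to0$ and $0\to\ker\phi\to\ker\hat\phi\to\cok\gamma\to\cok\phi\to\cok\hat\phi\to0$ with $\hat\phi=\hat\beta$; your target sequence $0\to\cok\gamma\to\cok\phi\to\cok\hat i\to0$ is the same one.

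The step you gloss over is the monicity of $\phi$, and it is not a formality: ``stability of admissible monomorphisms under pushout'' does not apply to the comparison map between two \emph{different} pushouts, and without the hypothesis on $\hat i$ the map $\phi$ need not even be a monomorphism --- this is precisely why the full Gluing Lemma later requires the span-factorization machinery. In the paper, monicity of $\hat i$ gives $\ker\hat\beta=0$ via the first snake sequence, which forces $\ker\phi=0$ via the second. In your staged factorization, the first stage $B+_AC\to B+_AC'$ is genuinely a pushout of $\gamma$, hence an acyclic cofibration; but the second stage $B+_AC'\to B'+_{A'}C'$ is not a pushout of any map in sight (note $B'+_{A'}C'\not\cong B'+_B(B+_AC')$ in general, since pushing out along $A'\to B'$ differs from pushing out along the composite $A\to A'\to B'$). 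Its monicity and its cokernel identification both require comparing $0\to C'\to B+_AC'\to\cok i\to0$ with $0\to C'\to B'+_{A'}C'\to\cok i'\to0$ via the Snake Lemma and using $\ker\hat\beta=0$ --- i.e., exactly the input you did not spell out. Once that is supplied, the rest of your bookkeeping and all the closure-property invocations are correct.
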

\begin{proof}
Consider the following commutative diagram with exact rows in $\C$ 
\\
\[
\btk[bo column sep, bo row sep]
 & C\ar[rrd,hookrightarrow,"\iota_C"]\ar[ddd,"\gamma" near start,"\sim"' near start,hookrightarrow] & & & & \\
 A\ar[rrd,hookrightarrow,"i" pos=0.6, crossing over]\ar[ddd,"\alpha" near start,"\sim"' near start,hookrightarrow] \ar[ur,"j"] & &            & B+_A C\ar[ddd,"\phi" near start] \ar[rrd,twoheadrightarrow] & &  \\
  & & B\ar[rrd, crossing over, twoheadrightarrow]
  \ar[ur,"\iota_B"] & & & \cok\iota_C\ar[ddd,"\hat{\phi}" near start]  \\
 & C'\ar[rrd,"\iota_{C'}"', hookrightarrow] & & & \cok i
 \ar[ur,equal] & \\
 A'\ar[rrd, hookrightarrow, "i'"]\ar[ur,"j'"] & & & B'+_{A'} C'\ar[rrd, twoheadrightarrow] & &  \\
 & & B'\ar[rrd, twoheadrightarrow]\ar[ur,"\iota_{B'}"]\ar[uuu,"\beta"' near end,"\sim" near end,hookleftarrow, crossing over] & & &\cok\iota_{C'} \\
 & & & & \cok i'\ar[ur,equal]\ar[uuu, leftarrow, "\hat{\beta}"' near end, crossing over]&
\etk
\]

Applying the Snake Lemma to the front face, we get the following exact sequence in an ambient abelian category $\A$
\begin{equation}\label{eq1}\btk[column sep=small] 0\rar & \ker\hat{\beta}\rar & \cok\alpha\rar["\hat{i}"] & \cok\beta\rar & \cok\hat{\beta}\rar & 0.\etk\end{equation} By assumption, the map $\hat{i}$ is a cofibration; in particular, it is a monomorphism, and therefore $\ker\hat{\beta}=0$.

Now, applying the Snake Lemma to the back face yields the exact sequence in $\A$
\begin{equation}\label{eq2}\btk[column sep=small] 0\rar & \ker\phi\rar & \ker\hat{\phi}\rar & \cok\gamma\rar & \cok\phi\rar & \cok\hat{\phi}\rar & 0.\etk\end{equation} Since $\hat{\phi}=\hat{\beta}$, we get that $\ker\hat{\phi}=0$ and thus $\ker\phi=0$, proving $\phi$ is a monomorphism. It remains to show that $\cok\phi\in\C\cap\Z$.

First, note from (\ref{eq1}) that $\cok\hat{i}= \cok\hat{\beta}$; then, since $\hat{i}$ is a cofibration, we have $\cok\hat{\beta}\in\C$.
 Also, sequence (\ref{eq1}) actually reduces to
 \[\btk[column sep=small] 0\rar & \cok\alpha\rar & \cok\beta\rar & \cok\hat{\beta}\rar & 0,\etk\] and because $\alpha$ and $\beta$ are acyclic cofibrations, we know that $\cok\alpha,\cok\beta\in\Z\cap\C$; using the fact that $\Z\cap\C$ is closed under cokernels of admissible monomorphisms in $\C$, we conclude that $\cok\hat{\beta}\in\Z$.

 Finally, we can reduce sequence (\ref{eq2}) to
  \[\btk[column sep=small] 0\rar & \cok\gamma\rar & \cok\phi\rar & \cok\hat{\phi}\rar & 0\etk\]
 from which we see that $\cok\phi$ is an extension of $\cok\hat{\phi} (=\cok\hat{\beta})$ by $\cok\gamma$, both of which belong to $\C\cap\Z$; therefore $\cok\phi\in\C\cap\Z$, concluding our proof.
\end{proof}

We can also show the corresponding result when the vertical maps are acyclic fibrations. Although the idea of the proof is similar in spirit, we include it in order to demonstrate the need for the cotorsion pair $(\C,\C^\bot)$ to be hereditary.

\begin{prop}\label{gluing_for_fibs}
Given a commutative diagram in $\C$
\[\btk
C\dar[twoheadrightarrow,"\gamma", "\sim"'] & A\lar["j"']\rar[hookrightarrow,"i"]\dar[twoheadrightarrow,"\alpha","\sim"'] & B\dar[twoheadrightarrow,"\beta","\sim"']\\
C' & A'\lar["j'"]\rar[hookrightarrow,"i'"'] & B'
\etk\] 
the map on pushouts $\phi\colon B+_A C\to B'+_{A'}C'$ is a weak equivalence.
\end{prop}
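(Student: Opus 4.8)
\emph{Plan.} The idea is to avoid producing a factorization of $\phi$ by hand: instead I would show directly that $\phi$ is an $\E$-admissible epimorphism between objects of $\C$ whose kernel lies in $\C^\bot$. Since $\C^\bot\subseteq\Z$, the second assertion of Lemma~\ref{we_cofib} then immediately gives that $\phi$ is a weak equivalence. To set this up, I would first record that, exactly as in the verification of axiom (C3), pushing the cofibrations $i$ and $i'$ out along $j$ and $j'$ produces short exact sequences $0\to C\to B+_A C\to\cok i\to 0$ and $0\to C'\to B'+_{A'}C'\to\cok i'\to 0$ with $\cok i,\cok i'\in\C$ and $B+_A C,\,B'+_{A'}C'\in\C$, and that the commuting left-hand square together with $\alpha,\beta$ induces a map $\bar\beta\colon\cok i\to\cok i'$ fitting into a morphism of short exact sequences
\[\btk
0\rar & C\dar["\gamma"]\rar & B+_A C\dar["\phi"]\rar & \cok i\dar["\bar\beta"]\rar & 0\\
0\rar & C'\rar & B'+_{A'}C'\rar & \cok i'\rar & 0
\etk\]

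The key step, and the one place where the hereditary hypothesis is needed, is the analysis of $\bar\beta$. Applying the Snake Lemma in the ambient abelian category to the morphism of short exact sequences $0\to A\xrightarrow{i}B\to\cok i\to 0$ over $0\to A'\xrightarrow{i'}B'\to\cok i'\to 0$ with vertical maps $\alpha,\beta,\bar\beta$, and using that $\alpha$ and $\beta$ are admissible epimorphisms (so have zero cokernel in the ambient abelian category), produces an exact sequence $0\to\ker\alpha\to\ker\beta\to\ker\bar\beta\to 0$. A direct check — using that $i'$ is monic — identifies the square with corners $\ker\alpha,\ker\beta,A,B$ as the pullback of $i$ along the admissible monomorphism $\ker\beta\hookrightarrow B$, so $\ker\alpha\to\ker\beta$ is itself an admissible monomorphism in $\E$, with cokernel $\ker\bar\beta$. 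Since $\ker\alpha,\ker\beta\in\C^\bot$ and $(\C,\C^\bot)$ is hereditary, $\C^\bot$ is closed under cokernels of admissible monomorphisms, whence $\ker\bar\beta\in\C^\bot$. In particular $0\to\ker\bar\beta\to\cok i\to\cok i'\to 0$ is exact in $\E$, so $\bar\beta$ is an $\E$-admissible epimorphism with kernel in $\C^\bot$.

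Finally I would apply the Snake Lemma to the displayed morphism of short exact sequences. Since $\gamma$ and $\bar\beta$ are admissible epimorphisms, $\cok\phi=0$ and one obtains an exact sequence $0\to\ker\gamma\to\ker\phi\to\ker\bar\beta\to 0$; now $\ker\gamma\in\C^\bot$ (as $\gamma$ is an acyclic fibration) and $\ker\bar\beta\in\C^\bot$, and $\C^\bot$ is closed under extensions (being a side of a cotorsion pair), so $\ker\phi\in\C^\bot$. Hence $0\to\ker\phi\to B+_A C\to B'+_{A'}C'\to 0$ is exact in $\E$, so $\phi$ is an $\E$-admissible epimorphism between objects of $\C$ whose kernel lies in $\C^\bot\subseteq\Z$, and Lemma~\ref{we_cofib} gives that $\phi$ is a weak equivalence. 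The only genuinely delicate point is the pullback identification together with the use of heredity in the previous paragraph; everything else is bookkeeping in the ambient abelian category, using the closure properties of $\C$ and $\C^\bot$.
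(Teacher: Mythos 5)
Your argument is correct and follows the same skeleton as the paper's proof: apply the Snake Lemma to the map of short exact sequences $(\alpha,\beta,\bar\beta)$ to control $\ker\bar\beta$, then to $(\gamma,\phi,\bar\beta)$ to conclude that $\phi$ is an admissible epimorphism whose kernel is an extension of $\ker\bar\beta$ by $\ker\gamma$, and finish with the second half of Lemma~\ref{we_cofib}. The one genuine (if small) divergence is where you certify membership of the kernels: the paper places $\ker\alpha,\ker\beta$ in $\C$ via heredity of the left class and then only concludes $\ker\hat\beta\in\Z$ and $\ker\phi\in\Z$, using that $\Z\cap\C$ is closed under cokernels of admissible monomorphisms and under extensions; you instead stay entirely inside $\C^\bot$, using the equivalent formulation of heredity (closure of $\C^\bot$ under cokernels of admissible monomorphisms) plus extension-closure of the right class of a cotorsion pair. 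Your route buys a marginally stronger conclusion --- $\phi$ is in fact an acyclic fibration, not merely a weak equivalence --- and it never invokes the hypothesis on $\Z\cap\C$, at the cost of nothing. On the delicate point of why $\ker\alpha\to\ker\beta$ is an \emph{admissible} monomorphism of $\E$ (so that heredity applies), your pullback identification is at least as explicit as the paper, which simply asserts that the kernel sequence is exact in $\C$; if you want to be fully scrupulous, the cleanest justification is to pull back $\beta$ along the admissible monomorphism $i'$ (pullbacks of admissible epimorphisms along admissible monomorphisms are bicartesian with the new parallel map an admissible monomorphism) rather than to pull back $i$ along $\ker\beta\hookrightarrow B$, since stability of admissible monomorphisms under pullback along arbitrary admissible monomorphisms is not one of the standard exact-category axioms.
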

\begin{proof}
Consider the commutative diagram from the proof of Proposition \ref{gluing_for_cofibs}, where the vertical maps are now acyclic fibrations. Applying the Snake Lemma to the front face yields the exact sequence
\[\btk[column sep=small] 0\rar & \ker\alpha\rar & \ker\beta\rar & \ker\hat{\beta}\rar & 0\etk\]
Note that this is an exact sequence in $\C$, since $\C$ is closed under kernels of $\E$-admissible epimorphisms. Also, $\ker\alpha,\ker\beta\in\C^\bot\subseteq\Z$ because $\alpha$ and $\beta$ are acyclic fibrations, and thus $\ker\hat{\beta}\in\Z$ due to the fact that $\Z\cap\C$ is closed under cokernels of admissible monomorphisms in $\C$.

Now, the Snake Lemma for the back face yields the exact sequence in $\A$
\[\btk[column sep=small] 0\rar & \ker\gamma\rar & \ker\phi\rar & \ker\hat{\phi}\rar & 0\rar & \cok\phi \rar & \cok\hat{\phi}\rar & 0.\etk\] Looking at its right part, we see that $\cok\phi\cong\cok\hat{\phi}$; however, $\cok\hat{\phi}=\cok\hat{\beta}$ and $\hat{\beta}$ is an admissible epimorphism (since $\beta$ is); hence $\phi$ is also an  epimorphism.

Finally, looking at the left part of the sequence we see that $\ker\phi$ is an extension of $\ker\hat{\phi}(=\ker\hat{\beta})$ by $\ker\gamma$, both of which belong to $\Z\cap\C$, so $\ker\phi\in\Z$, proving that $\phi$ is an admissible epimorphism in $\E$, and moreover, a weak equivalence due to Proposition \ref{we_cofib}.
\end{proof}

These two propositions, along with Theorem \ref{spans} in the appendix, allow us to prove the Gluing Lemma.

\begin{thm}[W2, Gluing Lemma]\label{gluing}
Given a commutative diagram in $\C$
\[\btk
C\dar["\sim"',"\gamma"] & A\lar["j"']\rar[hookrightarrow,"i"]\dar["\sim"',"\alpha"] & B\dar["\sim"',"\beta"]\\
C' & A'\lar["j'"]\rar[hookrightarrow,"i'"'] & B'
\etk\] the induced map $B+_A C\to B'+_{A'} C'$ is a weak equivalence.
\end{thm}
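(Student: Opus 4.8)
The plan is to deduce the general Gluing Lemma from the two special cases already established: Proposition \ref{gluing_for_cofibs}, where the vertical legs are acyclic cofibrations, and Proposition \ref{gluing_for_fibs}, where they are acyclic fibrations. Since by definition every weak equivalence factors as an acyclic cofibration followed by an acyclic fibration, the idea is to factor the entire morphism of spans $(\gamma,\alpha,\beta)$ through an intermediate span $C''\leftarrow A''\to B''$, in such a way that the first morphism of spans has acyclic cofibrations as its vertical legs and the second has acyclic fibrations. The subtle points are: (i) the factorizations of $\alpha$, $\beta$, $\gamma$ must be chosen compatibly with the span maps $j,i$ and $j',i'$; (ii) the intermediate span must again be legitimate in $\C$, i.e. $A''\to B''$ must be a cofibration so that the pushout $B''+_{A''}C''$ exists; and (iii) the first morphism of spans must satisfy the extra hypothesis of Proposition \ref{gluing_for_cofibs}, namely that the map it induces on the cokernels of the vertical legs over $A$ and $B$ is a cofibration. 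The existence of such a compatible factorization is precisely the statement of Theorem \ref{spans} in the appendix, and it is the only non-formal input to the proof.

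Granting Theorem \ref{spans}, I would argue as follows. It supplies a span $C''\leftarrow A''\to B''$ in $\C$ with $A''\to B''$ a cofibration, a morphism of spans $(C\leftarrow A\to B)\to (C''\leftarrow A''\to B'')$ whose three vertical maps are acyclic cofibrations and which induces a cofibration on the cokernels over $A$ and $B$, and a morphism of spans $(C''\leftarrow A''\to B'')\to (C'\leftarrow A'\to B')$ whose three vertical maps are acyclic fibrations, such that the composite morphism of spans is the original commutative diagram. Proposition \ref{gluing_for_cofibs} applied to the first morphism shows that $B+_AC\to B''+_{A''}C''$ is an acyclic cofibration, in particular a weak equivalence; Proposition \ref{gluing_for_fibs} applied to the second shows that $B''+_{A''}C''\to B'+_{A'}C'$ is a weak equivalence. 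By functoriality of the pushout on the category of spans, the composite of these two maps equals the map $B+_AC\to B'+_{A'}C'$ induced by the original morphism of spans; since weak equivalences compose (Proposition \ref{we}), this map is a weak equivalence, proving the lemma.

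I expect the genuine difficulty to sit entirely in Theorem \ref{spans} rather than in the assembly above. There one must first factor $\alpha$ as a cofibration followed by an acyclic fibration using Proposition \ref{factorization}, then transport this factorization along the legs $j$ and $i$ (and dually along $j'$ and $i'$) via a sequence of pushouts and pullbacks, invoking results such as Lemma \ref{pullback} to keep track of which maps are acyclic fibrations and which are cofibrations, all while checking that the resulting intermediate ``$i$''-leg remains a cofibration and that the induced cokernel map stays a cofibration. Once that bookkeeping is done in the appendix, the remainder is the short formal argument above.
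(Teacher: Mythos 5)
Your overall strategy --- reduce to Propositions \ref{gluing_for_cofibs} and \ref{gluing_for_fibs} by factoring the morphism of spans using the cotorsion pair on $\Span(\E)$ --- is indeed the paper's strategy, but the two-stage factorization you describe cannot be obtained directly, and this is where the real difficulty of the proof lives. Theorem \ref{spans} does not assert the existence of a factorization of $(\gamma,\alpha,\beta)$ into a morphism with \emph{acyclic} cofibrations as legs followed by one with acyclic fibrations as legs; it only asserts that $(\C_\Sp,\C^\bot_\Sp)$ is a complete cotorsion pair, so that Proposition \ref{factorization} applied in $\Span(\E)$ factors a morphism of spans as a \emph{cofibration} of spans (legs are cofibrations, and the induced map on cokernels is a cofibration) followed by an acyclic fibration of spans. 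To upgrade the legs of that first factor from cofibrations to acyclic cofibrations you would need to argue: ``$g$ is an acyclic fibration and $gf$ is a weak equivalence, hence $f$ is a weak equivalence.'' That is a saturation property which fails in this generality --- Proposition \ref{2-out-of-3} only covers the cases where $gf$ is itself an acyclic fibration or an acyclic cofibration, and the paper emphasizes that the weak equivalences of $(\C,w_\Z)$ need not be saturated. So applied to the original diagram, whose vertical legs are arbitrary weak equivalences, your factorization leaves the first factor's legs as mere cofibrations, and Proposition \ref{gluing_for_cofibs} does not apply.

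The paper escapes this by a four-stage factorization. First each of $\alpha,\beta,\gamma$ is factored (by definition of weak equivalence) as an acyclic cofibration followed by an acyclic fibration, and these are assembled into two stacked morphisms of spans via lifting arguments (Proposition \ref{liftings}); a separate argument, using a further factorization and two more lifts, is needed to arrange that the middle row's horizontal map $\overline{i}$ is a cofibration --- this step happens in the proof of the Gluing Lemma itself, not in the appendix. Only then is the span-level factorization of Theorem \ref{spans} applied, once to each of the two stacked morphisms. Because the composite legs of the first morphism are acyclic cofibrations and those of the second are acyclic fibrations, the restricted 2-out-of-3 statements of Proposition \ref{2-out-of-3} now do apply and show that the cofibration factors have acyclic cofibrations as legs. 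The result is a composite of four pushout maps, each covered by one of the two special-case propositions. Your proposal correctly identifies all the ingredients, but collapses them into a single factorization whose key property (acyclicity of the first factor's legs) is exactly what the absence of saturation prevents you from deducing.
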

\begin{proof}
The maps $\alpha, \beta$ and $\gamma$ are weak equivalences, so by definition they can be expressed as the composition of an acyclic cofibration followed by an acyclic fibration, as shown in the diagram below left.
\begin{equation}\label{eq6}\btk
C\dar["\sim"',"\gamma_1", hookrightarrow] & A\lar["j"']\rar[hookrightarrow,"i"]\dar["\sim"',"\alpha_1", hookrightarrow] & B\dar["\sim"',"\beta_1", hookrightarrow]\\
\overline{C}\dar["\sim"',"\gamma_2", twoheadrightarrow] & \overline{A}\dar["\sim"',"\alpha_2", twoheadrightarrow] & \overline{B}\dar["\sim"',"\beta_2", twoheadrightarrow]\\
C' & A'\lar["j'"]\rar[hookrightarrow,"i'"'] & B'
\etk \hspace{1.5cm}
\btk
C\dar["\sim"',"\gamma_1", hookrightarrow] & A\lar["j"']\rar[hookrightarrow,"i"]\dar["\sim"',"\alpha_1", hookrightarrow] & B\dar["\sim"',"\beta_1", hookrightarrow]\\
\overline{C}\dar["\sim"',"\gamma_2", twoheadrightarrow] & \overline{A}\dar["\sim"',"\alpha_2", twoheadrightarrow]\rar["\overline{i}"]\lar["\overline{j}"'] & \overline{B}\dar["\sim"',"\beta_2", twoheadrightarrow]\\
C' & A'\lar["j'"]\rar[hookrightarrow,"i'"'] & B'
\etk\end{equation}
Using the fact that cofibrations have the left lifting property with respect to acyclic fibrations (Proposition \ref{liftings}), the liftings in the two squares below allow us to complete the middle horizontal row of our diagram above right.
\[\btk
A\rar["\beta_1 i"]\dar["\alpha_1"', hookrightarrow] & \overline{B}\dar["\beta_2", "\sim"', twoheadrightarrow]\\
\overline{A}\ar[ur,"\overline{i}", dashed]\rar["i'\alpha_2"'] & B'
\etk \hspace{2cm}
\btk
A\rar["\gamma_1 j"]\dar["\alpha_1"', hookrightarrow] & \overline{C}\dar["\gamma_2", "\sim"', twoheadrightarrow]\\
\overline{A}\rar["j'\alpha_2"']\ar[ur, dashed,"\overline{j}"] & C'
\etk\]

A priori, there is no reason for the map $\overline{i}$ to be a cofibration. However, if we factor $\overline{i}=pl$ where $l$ is a cofibration and $p$ an acyclic fibration, we can once again use the lifting property in the squares
\[\btk
A\rar["l\alpha_1"]\dar["i"', hookrightarrow] & \hat{B}\dar["p", "\sim"', twoheadrightarrow]\\
B\ar[ur,"\delta_1", dashed]\rar["\beta_1"'] & \overline{B}
\etk \hspace{2cm}
\btk
\overline{A}\rar["i'\alpha_2"]\dar["l"', hookrightarrow] & B'\dar[equal]\\
\hat{B}\rar["\beta_2 p"']\ar[ur, dashed,"\delta_2"] & B'
\etk\]
to obtain the diagram
\[\btk
A\dar["\alpha_1", hookrightarrow,"\sim"']\rar["i",hookrightarrow] & B\rar[equal]\dar["\delta_1"]& B\dar["\beta_1","\sim"', hookrightarrow]\\
\overline{A}\rar["l", hookrightarrow]\dar["\alpha_2","\sim"',twoheadrightarrow] & \hat{B}\rar["p","\sim"', twoheadrightarrow]\dar["\delta_2"]
& \overline{B}\dar["\beta_2","\sim"', twoheadrightarrow]\\
A'\rar["i'",hookrightarrow] & B'\rar[equal] & B'
\etk\]

Since $p\delta_1=\beta_1$, where $p$ is an acyclic fibration and $\beta_1$ an acyclic cofibration, we see that $\delta_1$ must be a monomorphism. Furthermore, we can apply the Snake Lemma to the diagram
\[\btk
0\rar & 0\rar\dar & B\rar[equal]\dar["\delta_1"] & B\dar[hookrightarrow,"\beta_1"]\rar & 0\\
0\rar & \ker p\rar & \hat{B}\rar["p"] & \overline{B}\rar & 0
\etk\]
and obtain the short exact sequence in $\E$
\[\btk
0\rar & \ker p\rar & \cok\delta_1\rar & \cok\beta_1\rar & 0.
\etk\] Here $\ker p\in\C^\bot\cap\C$ (since $p$ is a $\C$-admissible epimorphism; see Remark \ref{epis_in_C}), and $\cok\beta_1\in\C$; thus $\cok\delta_1\in\C$ proving $\delta_1$ is a cofibration. Moreover, it is an acyclic cofibration, by Proposition \ref{2-out-of-3}.

Also, since $\delta_2=\beta_2 p$, with $p$ and $\beta_2$ acyclic fibrations, $\delta_2$ is an acyclic fibration by Proposition \ref{compositions}. Furthermore, $l$ is a cofibration, and we have $\delta_2\delta_1=\beta_2\beta_1$. This means we can safely assume the map $\overline{i}$ obtained in (\ref{eq6}) is a cofibration.

We now wish to find a way to obtain the stronger hypotheses of Proposition \ref{gluing_for_cofibs}; the key idea is to work with $\gamma_i, \alpha_i$, and $\beta_i$ as a single map in the category $\Span(\E)$. We refer the reader to Appendix \ref{appendix} for a definition of the category of spans, and its relation to our treatment of cotorsion pairs.

Given that $(\C,\C^\bot)$ is a complete cotorsion pair in $\E$, Theorem \ref{spans} ensures that $(\C_\Sp,\C^\bot_\Sp)$ is a complete cotorsion pair in $\Span(\E)$. Then, by Proposition \ref{factorization}, the maps of spans $(\gamma_1,\alpha_1,\beta_1)$ and $(\gamma_2,\alpha_2,\beta_2)$ in the rightmost diagram of figure (\ref{eq6}) factor as a cofibration followed by an acyclic fibration in $\Span(\E)$; that is, they factor as
\begin{equation}\label{eq7}\btk
C\dar[hookrightarrow,,"\gamma_1^1"] & A\lar["j"']\rar[hookrightarrow,"i"]\dar[hookrightarrow,,"\alpha_1^1"] & B\dar[hookrightarrow,,"\beta_1^1"]\\
C''\dar[twoheadrightarrow,"\sim"',"\gamma_1^2"] & A''\lar["j''"']\rar[hookrightarrow,"i''"] \dar[twoheadrightarrow,"\sim"',"\alpha_1^2"] & B''\dar[twoheadrightarrow,"\sim"',"\beta_1^2"]\\
\overline{C} & \overline{A}\lar["\overline{j}"']\rar[hookrightarrow,"\overline{i}"] & \overline{B}
\etk \hspace{1.5cm}
\btk
\overline{C}\dar[hookrightarrow,,"\gamma_2^1"] & \overline{A}\lar["\overline{j}"']\rar[hookrightarrow,"\overline{i}"] \dar[hookrightarrow,,"\alpha_2^1"] & \overline{B}\dar[hookrightarrow,,"\beta_2^1"]\\
C'''\dar[twoheadrightarrow,"\sim"',"\gamma_2^2"] & A'''\lar["j'''"']\rar[hookrightarrow,"i'''"] \dar[twoheadrightarrow,"\sim"',"\alpha_2^2"] & B'''\dar[twoheadrightarrow,"\sim"',"\beta_2^2"]\\
C' & A'\lar["j'"']\rar[hookrightarrow,"i'"] & B'
\etk\end{equation} where the induced maps $\cok\alpha_i^1\to\cok\beta_i^1$ are cofibrations, and $\ker\alpha_i^2\to\ker\gamma_i^2$ are acyclic fibrations, for $i=1,2$.\footnote{Note that the middle rows automatically have  cofibrations as their right maps, as every time we factor a map between objects of $\P_\Sp$, the middle object in the factorization also belongs to $\P_\Sp$.}

Moreover, we have that $\alpha_1^2\alpha_1^1=\alpha_1$, where $\alpha_1^2$ is an acyclic fibration and $\alpha_1$ an acyclic cofibration; thus, by Proposition \ref{2-out-of-3}, $\alpha_1^1$ is an acyclic cofibration (and similarly for $\gamma_1^1$ and $\beta_1^1$). On the other hand, $\alpha_2^2\alpha_2^1=\alpha_2$, where $\alpha_2^2$ and $\alpha_2$ are acyclic fibrations; thus $\alpha_2^1$ is an acyclic cofibration (and similarly for $\gamma_2^1$ and $\beta_2^1$).


Now we can apply Proposition \ref{gluing_for_cofibs} to the top half of both diagrams in figure (\ref{eq7}) to get that the maps $B+_A C\to B''+_{A''}C''$ and $\overline{B}+_{\overline{A}} \overline{C}\to B'''+_{A'''}C'''$ are acyclic cofibrations. We can also apply Proposition \ref{gluing_for_fibs} to the bottom half of both diagrams to get that $B''+_{A''}C''\to\overline{B}+_{\overline{A}}\overline{C}$ and $B'''+_{A'''}C'''\to B'+_{A'}C'$ are weak equivalences. Since the composition of these four pushout maps is clearly the map $B+_A C\to B'+_{A'}C'$, we see that this last map is a composition of weak equivalences, and thus a weak equivalence itself.
\end{proof}

It should be pointed out that many of the difficulties when proving the Gluing Lemma arise from the fact that our class of weak equivalences need not be saturated; that is, they need not satisfy 2-out-of-3.  Indeed, if they were saturated, then one could find a proof of the Gluing Lemma, for example, in  \cite[Thm.\ 2.27]{KP97}. However, we wished to investigate the broadest possible relation between cotorsion pairs and $K$-theory, and saturation will not hold under our general assumptions on the subcategory $\Z$. As we will see later on in Proposition \ref{saturation}, the lack of this property would be solved by requiring that $\Z\cap\C$ have 2-out-of-3 for exact sequences in $\C$.


\section{Properties satisfied by $(\C,w_\Z)$}

Waldhausen categories obtained from cotorsion pairs enjoy many desired properties, which we now study. Throughout this section we continue to use the notation and hypotheses of Theorem \ref{the_thm}.

As suggested by the notation, the category $\Z$ used to define the weak equivalences consists of the acyclic objects in our Waldhausen category.

\begin{lemma}\label{acyclic_obj}
For any object $A$ of $\C$, the map $0\to A$ 
is a weak equivalence if and only if $A\in\Z$.
\end{lemma}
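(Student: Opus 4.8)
The plan is to prove both implications directly from the definition of weak equivalence. For the "if" direction, suppose $A \in \Z$; since $0 \in \C$, we have $A \in \C \cap \Z$, so the map $0 \to A$ is an admissible monomorphism whose cokernel is $A \in \C \cap \Z$ — that is, it is an acyclic cofibration, hence a weak equivalence. (Equivalently, one could invoke the second assertion of Lemma \ref{we_cofib} applied to $A \twoheadrightarrow 0$, whose kernel is $A \in \Z$.)

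For the "only if" direction, suppose $0 \to A$ is a weak equivalence. By definition it factors as $0 \xrightarrow{i} D \xrightarrow{p} A$ where $i$ is an acyclic cofibration and $p$ an acyclic fibration. Then $D \cong \cok i \in \C \cap \Z$ since $i$ is an acyclic cofibration with source $0$; and $\ker p \in \C^\bot$ since $p$ is an acyclic fibration, so $\ker p \in \C^\bot \subseteq \Z$ by hypothesis, and moreover $\ker p \in \C$ by the heredity argument of Remark \ref{epis_in_C} (or simply because we may take the factorization with $p$ a $\C$-admissible epimorphism). Now the short exact sequence
\[
\btk[column sep=small] 0 \rar & \ker p \rar & D \rar["p"] & A \rar & 0 \etk
\]
has $\ker p, D \in \C \cap \Z$, and $\Z \cap \C$ is closed under cokernels of admissible monomorphisms in $\C$; therefore $A \in \Z$, as desired.

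I do not anticipate any real obstacle here: both directions are immediate once one unwinds the definitions, the only subtlety being the observation (already handled in Remark \ref{epis_in_C}) that the kernel of the acyclic fibration in the factorization lies in $\C$, so that the closure property of $\Z \cap \C$ can be applied to conclude $A \in \Z$.
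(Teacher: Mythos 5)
Your proof is correct and follows essentially the same route as the paper: factor $0\to A$ as an acyclic cofibration followed by an acyclic fibration and apply closure of $\Z\cap\C$ under cokernels of admissible monomorphisms to the sequence $0\to\ker p\to \overline{A}\to A\to 0$. (The only quibble is cosmetic: in the ``if'' direction, $A\in\C\cap\Z$ because $A$ is an object of $\C$ by hypothesis, not ``since $0\in\C$''.)
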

\begin{proof}
If $A\in\Z$, then the map $0\to A$ is an admissible monomorphism with cokernel $A\in\C\cap\Z$; thus it is an acyclic cofibration, and in particular, a weak equivalence.

If the map $0:0\to A$ is a weak equivalence, we factor it as \[0=0\xrightarrow{i} \overline{A}\xrightarrow{p} A\] where $i$ is an acyclic cofibration and $p$ an acyclic fibration. Then $\overline{A}=\cok i\in\Z\cap\C$, and the exact sequence
\[0\to \ker p\to \overline{A}\xrightarrow{p} A \to 0\] has its two leftmost terms in $\Z\cap\C$; hence $A\in\Z$, since $\Z\cap\C$ is closed under cokernels of admissible monomorphisms in $\C$.
\end{proof}

\begin{lemma}[Left and right properness]\label{proper} Weak equivalences are stable under pushout with a cofibration, and under pullback with an admissible epimorphism.
\end{lemma}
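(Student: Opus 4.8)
The statement has two parts, and I would treat them as left and right properness for the Waldhausen structure $(\C,w_\Z)$: the pushout of a weak equivalence along a cofibration is again a weak equivalence, and the pullback of a weak equivalence along an admissible epimorphism of $\C$ is again a weak equivalence. For both parts my plan is to factor the given weak equivalence $g$ as an acyclic cofibration $j$ followed by an acyclic fibration $q$ — which, by Remark \ref{epis_in_C}, may be taken to be an admissible epimorphism in $\C$ — and then, using the pasting laws for pushouts and pullbacks, to handle the two halves one at a time.

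For left properness I would actually bypass the factorization and deduce it formally from the Gluing Lemma (Theorem \ref{gluing}): given $g\colon A\we C$ and a cofibration $f\colon A\cof B$, the pushout $B+_A C$ exists in $\C$ by axiom (C3), and I would apply the Gluing Lemma to the diagram whose two rows are $C_{\mathrm{top}}=A\xleftarrow{\ \id\ }A\xrightarrow{\ f\ }B$ and $C\xleftarrow{\ g\ }A\xrightarrow{\ f\ }B$ and whose vertical maps are $g,\,\id_A,\,\id_B$; the right-hand horizontal maps are the cofibration $f$ and all verticals are weak equivalences, so the induced map on pushouts $B+_A A=B\to B+_A C$, which is exactly the pushout of $g$ along $f$, is a weak equivalence. (As a sanity check one can also run the factorization argument: the pushout of the acyclic cofibration $j$ along $f$ is again an acyclic cofibration, since its cokernel is $\cok j\in\C\cap\Z$ and its target lies in $\C$ by (C3); and the pushout of the acyclic fibration $q$ along the resulting cofibration is, by the usual identification of the pushout of an admissible epimorphism along an admissible monomorphism, an admissible epimorphism with kernel $\ker q\in\C^\bot\subseteq\Z$ between objects of $\C$, hence a weak equivalence by Lemma \ref{we_cofib}; now compose.)

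For right properness there is no dual of the Gluing Lemma, so I would run the factorization argument in earnest. Writing $g=qj$ with $A\overset{j}{\cof}\bar A\overset{q}{\twoheadrightarrow}C$, and letting $p\colon B\twoheadrightarrow C$ be an admissible epimorphism of $\C$, I would form $P=A\times_C B$ — which lies in $\C$, being an extension of $A$ by $\ker p$ — and write $P=A\times_{\bar A}Q$ with $Q=\bar A\times_C B$. Since admissible epimorphisms and their kernels are stable under pullback, $Q\to B$ is an admissible epimorphism with kernel $\ker q\in\C^\bot\subseteq\Z$, while $Q\in\C$ (an extension of $\bar A$ by $\ker p$), so $Q\to B$ is a weak equivalence by Lemma \ref{we_cofib}. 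The remaining leg $P\to Q$ is the pullback of the admissible monomorphism $j$ along the admissible epimorphism $Q\to\bar A$; identifying it with $\ker(Q\to\cok j)\hookrightarrow Q$ shows it is an admissible monomorphism with cokernel $\cok j\in\C\cap\Z$, and $P\in\C$ because $\C$ is closed under kernels of admissible epimorphisms — here hereditariness of $(\C,\C^\bot)$ is essential — applied to $0\to P\to Q\to\cok j\to 0$. Hence $P\to Q$ is an acyclic cofibration, and by Proposition \ref{we} the composite $P\to Q\to B$, which by the pasting law for pullbacks is the pullback $P\to B$ of $g$ along $p$, is a weak equivalence. I expect the main obstacle to be precisely this last chain of verifications: the step where the pullback of the acyclic \emph{cofibration} part must land back in $\C$ is the one that genuinely forces the use of hereditariness, and one must also check carefully that the two successive pullbacks really do reassemble into the pullback of $g$ along $p$.
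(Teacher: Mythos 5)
Your argument is correct, and for the first half it takes a genuinely different route from the paper. The paper proves left properness directly: it factors the weak equivalence $a=p_a i_a$, takes the pushout along the cofibration in two stages, and uses the Snake Lemma to identify $\cok i_a\cong\cok b_1$ and $\ker p_a\cong\ker b_2$, so that the two stages are an acyclic cofibration followed by an acyclic fibration; right properness is then dismissed as ``dual.'' You instead deduce left properness formally from the Gluing Lemma (Theorem \ref{gluing}) applied to the diagram with rows $A\leftarrow A\to B$ and $C\leftarrow A\to B$ --- a legitimate move, since Lemma \ref{proper} appears after the Gluing Lemma is established and the latter's proof does not depend on properness. This is the slicker derivation, but it leans on the hardest theorem in the paper, whereas the paper's direct argument (which you also sketch as your ``sanity check'') is self-contained and elementary. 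For right properness you carry out in full the dual factorization argument the paper omits: the identifications $P=A\times_{\bar A}Q$, of $Q\to B$ as an $\E$-admissible epimorphism with kernel $\ker q\in\C^\bot\subseteq\Z$ between objects of $\C$ (hence a weak equivalence by Lemma \ref{we_cofib}), and of $P\to Q$ as an admissible monomorphism with cokernel $\cok j\in\C\cap\Z$, are all correct. One small simplification: you do not actually need hereditariness to see $P\in\C$, since $0\to\ker p\to P\to A\to 0$ exhibits $P$ as an extension of $A$ by $\ker p$, both in $\C$ (using that $p$ is a $\C$-admissible epimorphism), so closure under extensions suffices --- though your route through $0\to P\to Q\to\cok j\to 0$ is also valid.
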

\begin{proof}
Let $i:A\hookrightarrow B$ be a cofibration and $a:A\xrightarrow{\sim} A'$ a weak equivalence, which factors as $a=p_a i_a$ for some acyclic fibration $p_a$ and acyclic cofibration $i_a$. Taking successive pushouts, we can consider the diagram
\[\btk
0\rar & A\rar["i", hookrightarrow]\dar[hookrightarrow,"i_a","\sim"'] & B\rar\dar[hookrightarrow,"b_1"] & \cok i\rar\dar[equal] & 0\\
0\rar & \overline{A}\rar[hookrightarrow]\dar["p_a","\sim"',twoheadrightarrow] & \overline{A}+_A B\rar \dar[twoheadrightarrow, "b_2"]& \cok i\rar\dar[equal] & 0\\
0\rar & A'\rar["i", hookrightarrow] & A'+_{\overline{A}} (\overline{A}+_A B)\rar & \cok i\rar & 0
\etk\]

Applying the Snake Lemma to the top diagram, we see that $\cok i_a\cong \cok b_1$, and thus $b_1$ is also an acyclic cofibration. Similarly, the Snake Lemma on the bottom diagram yields $\ker p_a\cong \ker b_2$, and so $b_2$ is an acyclic fibration. Therefore, the composition
\[\btk
B\rar["b_2b_1"] & A'+_{\overline{A}} (\overline{A}+_A B)\simeq A'+_A B
\etk\] must be a weak equivalence.

The statement for admissible epimorphisms proceeds by duality.
\end{proof}

\begin{defn}
Following \cite{Wal85}, we say a Waldhausen category satisfies the {\it extension axiom} if any map between exact sequences
\[\btk
0\rar & A\rar[hookrightarrow]\dar["a","\sim"'] & B\rar\dar["b"] & C\rar\dar["c","\sim"'] & 0\\
0\rar & A'\rar[hookrightarrow] & B'\rar & C'\rar & 0
\etk\] where $a$ and $c$ are weak equivalences is such that $b$ is also a weak equivalence.
\end{defn}

\begin{prop}[Extension]\label{extension} Any Waldhausen category obtained through a cotorsion pair satisfies the extension axiom.
\end{prop}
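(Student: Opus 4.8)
The plan is to reduce the extension axiom to a composite of the properness statement (Lemma \ref{proper}) and the Gluing Lemma (Theorem \ref{gluing}), using the fact that in a diagram of short exact sequences the middle object $B$ is the pushout $A' +_A B$... no, more precisely, that $B'$ is the pushout of $A' \leftarrow A \rightarrow B$ along the cofibration $A \cof B$ whenever the two rows are exact and the left square is a pushout. So the first step is to reorganize the given data. Suppose we have a map of short exact sequences in $\C$ as in the definition, with $a\colon A \we A'$ and $c\colon C \we C'$ weak equivalences and cofibrations $A \cof B$, $A' \cof B'$. Form the pushout $P \coloneqq A' +_A B$; since $A \cof B$ is a cofibration, so is $A' \cof P$ by (C3), and $P \in \C$. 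The canonical map $P \to B'$ fits into a map of short exact sequences that is the identity on $A'$ and $C$, and one checks using the Snake Lemma (exactly as in Lemma \ref{pullback} / the gluing proofs) that $P \to B'$ induces an isomorphism on cokernels and kernels — i.e.\ it is an isomorphism, since both rows have the same $A'$ on the left and $C' \xleftarrow{\,\sim\,} C$ on the right, and the right-hand square $C \to C'$ being... wait: $C$ and $C'$ differ, so $P \to B'$ is \emph{not} iso in general.

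Let me restructure. The cleaner route: build $B'$ from $B$ in two steps. First, push out along $a\colon A \to A'$ to get $P = A' +_A B$ with cokernel still $C$; the map $B \to P$ is a weak equivalence by Lemma \ref{proper} (left properness: pushout of the weak equivalence $a$ along the cofibration $A \cof B$). Second, we must relate $P$ to $B'$: both sit in exact sequences $0 \to A' \to P \to C \to 0$ and $0 \to A' \to B' \to C' \to 0$ with a map between them that is the identity on $A'$ and is $c\colon C \we C'$ on the right. So it suffices to prove the extension axiom in the special case where the left map is an \emph{identity}. For that case, apply the Gluing Lemma (Theorem \ref{gluing}) to the diagram with rows $A' \xleftarrow{=} A' \cof P'$... hmm, $P$ need not be a pushout over $C$. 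Instead: in the exact sequence $0 \to A' \to B' \to C' \to 0$, $B'$ is \emph{not} a pushout, so Gluing does not directly apply to the $C \to C'$ variable.

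The honest fix is the dual construction: first handle the right-hand weak equivalence by a \emph{pullback}. Factor the extension problem as: (i) replace $B$ by the pullback $B \times_{C} \,$... again $C \ne C'$. The robust argument that does work is this. Factor $a = p_a i_a$ and $c = p_c i_c$ into acyclic cofibration followed by acyclic fibration (definition of weak equivalence). Using the lifting property of Proposition \ref{liftings} and the factorization of Proposition \ref{factorization}, construct intermediate short exact sequences refining the given map of extensions into a composite of maps of extensions in each of which the outer two vertical maps are \emph{simultaneously} acyclic cofibrations or \emph{simultaneously} acyclic fibrations — this is the same kind of ``spreading out a weak equivalence over a span'' maneuver used in the proof of Theorem \ref{gluing}, now applied in the category of short exact sequences (equivalently, in $\Span(\E)$ or an arrow-type category), exploiting that $(\C,\C^\bot)$ induces a complete hereditary cotorsion pair there as well (Theorem \ref{spans}, or an analogous statement for sequences). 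In the acyclic-cofibration case, the middle map has cokernel fitting in an exact sequence between $\cok i_a \in \C \cap \Z$ and $\cok i_c \in \C \cap \Z$, which lies in $\C \cap \Z$ since $\C \cap \Z$ is closed under extensions, so the middle map is an acyclic cofibration; in the acyclic-fibration case, dually, using that $\C$ is hereditary (so kernels stay in $\C$) and $\C^\bot \subseteq \Z$ with $\C\cap\Z$ closed under the relevant operations, the middle map is an $\E$-admissible epimorphism with kernel in $\Z$, hence a weak equivalence by Lemma \ref{we_cofib}. Composing, $b$ is a weak equivalence.

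The main obstacle I expect is exactly the same one that dominates the Gluing Lemma proof: our weak equivalences are not saturated, so we cannot simply say ``$b$ has a factorization whose pieces are weak equivalences by 2-out-of-3.'' We genuinely have to produce the intermediate exact sequences explicitly via lifting and factorization, keeping careful track that at each stage the outer vertical maps are of the \emph{same} type (both acyclic cofibrations, or both acyclic fibrations), so that the closure properties of $\C \cap \Z$ (extensions and cokernels of admissible monos) and the heredity of $\C$ (kernels of admissible epis) can be invoked on the induced map between the cokernels/kernels of the outer columns. The Snake Lemma bookkeeping is routine once the diagram is set up correctly; setting it up — essentially importing the $\Span(\E)$ trick into the category of short exact sequences — is where the real content lies.
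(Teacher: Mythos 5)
You were one observation away from the paper's actual proof, and then talked yourself out of it. After forming $P=A'+_AB$ and noting that $B\to P$ is a weak equivalence by left properness, the remaining comparison $P\to B'$ sits in a map of short exact sequences that is the identity on $A'$ and is $c\colon C\we C'$ on the cokernels; the key fact (this is \cite[Prop.\ 3.1]{Buh10}, cited in the paper) is that this second square is \emph{bicartesian}, so $P\cong B'\times_{C'}C$ and $P\to B'$ is the pullback of the weak equivalence $c$ along the admissible epimorphism $B'\twoheadrightarrow C'$. Lemma \ref{proper} as stated in the paper has two halves --- stability of weak equivalences under pushout along cofibrations \emph{and} under pullback along admissible epimorphisms --- and you only ever invoke the first. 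The second half is exactly what closes the argument: $b=b_2b_1$ is a composite of two weak equivalences. Your worry that ``$B'$ is not a pushout, so Gluing does not apply'' is correct but irrelevant; the tool needed is right properness, not the Gluing Lemma.

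The alternative you pivot to has a genuine gap. You propose to factor $a$ and $c$ into acyclic cofibration followed by acyclic fibration and then ``spread out'' the map of extensions through intermediate short exact sequences whose outer columns are simultaneously of the same type, appealing to ``Theorem \ref{spans}, or an analogous statement for sequences.'' Theorem \ref{spans} concerns the category of spans $\bullet\leftarrow\bullet\to\bullet$, not the category of short exact sequences (equivalently, of admissible monomorphisms in $\C$); a complete cotorsion pair on that category with the right generating/lifting behavior is not established anywhere in the paper and would require its own proof. You acknowledge that setting up these intermediate sequences is ``where the real content lies'' and then do not set them up, so the proposal as written does not constitute a proof. (The two special cases you describe --- outer maps both acyclic cofibrations, or both acyclic fibrations --- are fine once one also checks that the middle map is an admissible mono, resp.\ epi, but the reduction to them is missing.)
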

\begin{proof}
Consider a map of exact sequences $(a,b,c)$ as above. By \cite[Prop.\ 3.1]{Buh10}, this map can be factored as
\[\btk
0\rar & A\rar[hookrightarrow]\dar["a","\sim"'] & B\rar\dar["b_1"] & C\rar\dar[equal] & 0\\
0\rar & A'\rar[hookrightarrow]\dar[equal] & P\rar\dar["b_2"] & C\rar\dar["c","\sim"'] & 0\\
0\rar & A'\rar[hookrightarrow] & B'\rar & C'\rar & 0
\etk\] where the top left and bottom right squares are bicartesian. Then, Lemma \ref{proper} ensures $b_1$ and $b_2$ are weak equivalences, and thus $b=b_2b_1$ is, too.
\end{proof}




\begin{defn} A Waldhausen category satisfies the {\it saturation} axiom if, given composable maps $f, g$, whenever two of $f, g$ and $gf$ are weak equivalences, so is the third.
\end{defn}

\begin{prop}[Saturation]\label{saturation} A Waldhausen category obtained through a cotorsion pair satisfies the saturation axiom if and only if $\Z\cap\C$ has 2-out-of-3 for exact sequences in $\C$.
\end{prop}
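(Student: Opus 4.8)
The plan is to prove both directions; the forward one is short and the converse carries all the weight.

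\emph{Saturation $\Rightarrow$ 2-out-of-3 for exact sequences.} Given an exact sequence $0\to X\to Y\to Z\to 0$ in $\C$ with two of $X,Y,Z$ in $\Z$, I place the third in $\Z$. Two of the three cases need nothing new: if $X,Y\in\Z$, the admissible epimorphism $Y\twoheadrightarrow Z$ has kernel in $\Z$, hence is a weak equivalence by Lemma \ref{we_cofib}, and composing with $0\we Y$ (Lemma \ref{acyclic_obj}) and invoking Proposition \ref{we} shows $0\to Z$ is a weak equivalence, i.e.\ $Z\in\Z$; if $X,Z\in\Z$, the monomorphism $X\hookrightarrow Y$ has cokernel $Z\in\Z\cap\C$, so is an acyclic cofibration, and composing with $0\we X$ gives $Y\in\Z$. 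The case that genuinely uses saturation is $Y,Z\in\Z\Rightarrow X\in\Z$: here $X\hookrightarrow Y$ is again an acyclic cofibration, hence a weak equivalence, and $0\to Y$ is a weak equivalence, so their composite $0\to Y$ is one; by saturation the map $0\to X$ is a weak equivalence, i.e.\ $X\in\Z$.

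\emph{2-out-of-3 for exact sequences $\Rightarrow$ saturation.} Closure of weak equivalences under composition is Proposition \ref{we}, so only the two cancellation properties remain. The key device is a \emph{characterization of weak equivalences}: a map $f$ in $\C$ is a weak equivalence if and only if, in some factorization $f=pi$ with $i$ a $\C$-cofibration and $p$ an acyclic fibration which is moreover an admissible epimorphism in $\C$ (such exists by Proposition \ref{factorization} and Remark \ref{epis_in_C}, so $\ker p\in\C\cap\C^{\bot}$), one has $\cok i\in\Z$. The ``if'' direction is immediate, and a weak equivalence admits such a factorization (its given factorization through an acyclic cofibration), so the content is \emph{independence of the chosen factorization}. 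Given two such factorizations $f=pi=p'i'$ with middle objects $M,M'$, use the lifting property of Proposition \ref{liftings} to produce lifts $h\colon M\to M'$ and $h'\colon M'\to M$ over $B$ and under $A$; forming the pullback $N=M\times_{B}M'$, these lifts provide sections of the two projections $N\to M$ and $N\to M'$, hence decompositions $N\cong M\oplus\ker p'\cong M'\oplus\ker p$ under which the canonical map $A\to N$ is identified with $(i,0)$ and with $(i',0)$ respectively. Thus $\cok i\oplus\ker p'\cong\cok i'\oplus\ker p$; since $\ker p,\ker p'\in\C\cap\C^{\bot}\subseteq\Z\cap\C$, applying 2-out-of-3 for exact sequences to the split exact sequences $0\to\cok i'\to\cok i'\oplus\ker p\to\ker p\to 0$ (and its mirror) cancels the kernel summands, yielding $\cok i\in\Z\Leftrightarrow\cok i'\in\Z$.

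Both cancellation properties now follow a common recipe; fix factorizations $f=p_{f}i_{f}$ and $g=p_{g}i_{g}$ as above, with middle objects $M_{f}$ and $M_{g}$. For ``$g$ and $gf$ weak equivalences $\Rightarrow f$ weak equivalence'': since $g$ is a weak equivalence, $\cok i_{g}\in\Z$, so $i_{g}$ is an acyclic cofibration and $i_{g}p_{f}$ is a weak equivalence; factoring $i_{g}p_{f}=p'i'$ (middle object $M'$) gives $\cok i'\in\Z$, so $gf=(p_{g}p')(i'i_{f})$ is a factorization of the required form and $\cok(i'i_{f})\in\Z$; feeding $0\to\cok i_{f}\to\cok(i'i_{f})\to\cok i'\to 0$ to 2-out-of-3 for exact sequences forces $\cok i_{f}\in\Z$, i.e.\ $f$ is a weak equivalence. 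For ``$f$ and $gf$ weak equivalences $\Rightarrow g$ weak equivalence'' — the main obstacle, since it does not reduce to the previous case — the same opening reduces the goal $\cok i_{g}\in\Z$ to $\cok i'\in\Z$ (again via $i_{g}p_{f}=p'i'$ and $0\to\cok i_{f}\to\cok(i'i_{f})\to\cok i'\to 0$, since $f$ and $gf$ are weak equivalences). One then transports $\cok i'\in\Z$ back to $\cok i_{g}$: form the pullback $P=M'\times_{M_{g}}B$ of $p'$ along $i_{g}$. It yields an admissible monomorphism $M_{f}\hookrightarrow P$ factoring $i'$, whose cokernel sits in $0\to\cok(M_{f}\hookrightarrow P)\to\cok i'\to\cok i_{g}\to 0$, together with an admissible epimorphism $P\twoheadrightarrow B$ factoring $p_{f}$, which gives $0\to\ker p_{f}\to\ker(P\twoheadrightarrow B)\to\cok(M_{f}\hookrightarrow P)\to 0$ with $\ker(P\twoheadrightarrow B)\cong\ker p'$ and $\ker p_{f}$ both in $\C\cap\C^{\bot}\subseteq\Z\cap\C$. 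Two further applications of 2-out-of-3 for exact sequences (first $\cok(M_{f}\hookrightarrow P)\in\Z$, then $\cok i_{g}\in\Z$) finish the argument. It is precisely this last step that uses heredity of $(\C,\C^{\bot})$ (to keep all the kernels and cokernels inside $\C$) and the hypothesis $\C^{\bot}\subseteq\Z$; the cokernel bookkeeping throughout relies only on the standard exact-category facts (the obscure axiom, the $3\times 3$ and snake lemmas, closure of admissible monomorphisms and epimorphisms under composition, and Lemma \ref{compositions}).
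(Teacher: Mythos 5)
Your proof is correct and, for the hard half of the equivalence, takes a genuinely different route from the paper. The direction ``saturation $\Rightarrow$ 2-out-of-3'' coincides with the paper's (only the case $B,C\in\Z\Rightarrow A\in\Z$ needs saturation; the other two are already forced by the standing hypotheses on $\Z\cap\C$). For the converse, the paper disposes of ``$g, gf$ weak equivalences $\Rightarrow f$ weak equivalence'' by citing Hovey's Lemma 5.11, and attacks ``$f, gf$ weak equivalences $\Rightarrow g$ weak equivalence'' by first proving the special case of composable acyclic cofibrations and then running a chain of factorizations and liftings ($g=pi$, $f=qj$, $iq=rk$) to show the cofibration part of $g$ is acyclic. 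Your argument instead isolates a \emph{factorization-independent criterion}: $f$ is a weak equivalence iff $\cok i\in\Z$ for any factorization $f=pi$ into a cofibration followed by an acyclic fibration, with independence proved by a lifting (Proposition \ref{liftings}) into the pullback $M\times_B M'$ and the resulting split decompositions, then cancelling the $\C^\bot$-summands with 2-out-of-3. This lemma is not in the paper and is the real content of your approach; once it is in place, both cancellation properties reduce to bookkeeping with the Noether sequence $0\to\cok i_f\to\cok(i'i_f)\to\cok i'\to 0$, and the final transport from $\cok i'$ to $\cok i_g$ via the pullback $P=M'\times_{M_g}B$ is structurally the same diagram chase the paper performs with $\hat k$ (kernels of the acyclic fibrations lie in $\C\cap\C^\bot\subseteq\Z\cap\C$, then heredity and closure under cokernels of admissible monomorphisms finish). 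One caveat, which applies equally to the paper's own proof: the assertion that the comparison map into the pullback ($M_f\to P$ for you, $\hat k$ for the paper) is an admissible monomorphism with cokernel in $\C$ is exactly the point where heredity and the snake lemma are invoked, and it deserves a sentence rather than a gesture at ``the obscure axiom'' (whose hypothesis --- that the map already admits a cokernel in the exact category --- is precisely what is at stake); since you are working at the same level of rigor as the paper here, I do not count this as a gap, but you should spell it out if you write this up.
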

\begin{rmk}
Note that we always assume the subcategory $\Z\cap\C$ is closed under extensions and cokernels of admissible monomorphisms in $\C$. Therefore, $\Z\cap\C$ has 2-out-of-3 for short exact sequences when, in addition to the hypotheses required in out theorem, $\Z\cap\C$ is closed under kernels of admissible epimorphisms in $\C$.
\end{rmk}
\begin{proof}
Assume $\Z\cap\C$ has 2-out-of-3 for exact sequences in $\C$, and let $f,g$ be composable maps in $\C$. Then, whenever $gf$ and $g$ are weak equivalences, $f$ is one as well, since the proof in \cite[Lemma 5.11]{Hov02} applies verbatim.

If instead $gf$ and $f$ are weak equivalences, we cannot apply Hovey's proof to conclude that $f$ is a weak equivalence, since it makes use of a factorization system no longer present in our setting; thus, we appeal to a different argument.

We first show a special case: let $j$ and $k$ be composable cofibrations, and suppose $j$ and $kj$ are acyclic cofibrations; we will show $k$ is one as well. Consider the following diagram
\[\btk
0\rar &  A\dar[hookrightarrow, "j"]\rar[equal] & A\dar[hookrightarrow,"kj"]\rar & 0\rar\dar & 0\\
0\rar & B\rar[hookrightarrow,"k"] & C\rar & \cok k\rar & 0
\etk\] This yields the exact sequence in $\C$
$$0\to \cok j\to \cok kj\to \cok k\to 0$$ whose two leftmost terms are in $\Z$; thus, $\cok k\in\Z$ and so $k$ is a weak equivalence.

For the general case, let $gf$ and $f$ be weak equivalences, and factor $g=pi$, with $p$ an acyclic fibration and $i$ a cofibration. Then $gf=pif$ with $gf$ and $p$ weak equivalences, so by the above instance of saturation, we get that $if$ is a weak equivalence. Since $f$ is a weak equivalence, it admits a factorization $f=qj$ as an acyclic cofibration followed by an acyclic fibration. Now factor $iq=rk$ as a cofibration followed by an acyclic fibration. We have that $if=iqj=rkj$ is a weak equivalence, and so is $r$, thus using the above instance of saturation once again, we see that $kj$ is a weak equivalence. But now $j$ and $kj$ are acyclic cofibrations, with $k$ a cofibration as well, and so $k$ must be an acyclic cofibration. Therefore $iq=rk$ is a weak equivalence. The diagram
\[\btk
0\rar & \ker q\dar["\hat{k}"]\rar[hookrightarrow] & A\dar[hookrightarrow,"k","\sim"']\rar[twoheadrightarrow,"\sim","q"'] & B\rar\dar[hookrightarrow,"i"] & 0\\
0\rar & \ker r\rar[hookrightarrow] & D\rar[twoheadrightarrow,"\sim","r"'] & C\rar & 0
\etk\] yields the exact sequence in an ambient abelian category $\A$
$$0\to \cok\hat{k}\to \cok k\to \cok i\to 0$$

Since $\C$ is closed under kernels of admissible epimorphisms, this is actually an exact sequence in $\C$. To show $i$ is a weak equivalence, it suffices to prove that $\cok\hat{k}\in\Z$, because we already have $\cok k\in\Z$. Indeed, in the exact sequence in $\C$
$$0\to\ker q\xrightarrow{\hat{k}} \ker r\to \cok\hat{k}\to 0,$$ the two leftmost terms belong to $\Z$, since $q$ and $r$ are acyclic fibrations. This shows that $\cok\hat{k}\in\Z$ and thus that $i$ is an acyclic cofibration, which in turn implies $g$ is a weak equivalence.

For the converse, suppose the Waldhausen category we obtain is saturated, and let $$0\to A\xrightarrow{i} B\to C\to 0$$ be an exact sequence in $\C$ with $B,C\in\Z$. Then $i$ is an acyclic cofibration, and $0\to B$ is a weak equivalence by Lemma \ref{acyclic_obj}. We thus have
\[\btk[column sep=small, row sep=small]
0\ar[rr,"\sim"]\ar[dr] & & B\\
& A\ar[ur,"i"',"\sim"] &
\etk\] and saturation implies $0\to A$ is a weak equivalence as well; then $A\in\Z$ by Lemma \ref{acyclic_obj}.
\end{proof}

\section{The Localization Theorem}

This section is devoted to the proof of our second main result: an exact version of Quillen's Localization Theorem, as advertised in the introduction. To make things clearer, we separate the proof of the fibration sequence from that of the universal property.

\begin{thm}\label{localization}
Let $\B$ be an exact category closed under kernels of epimorphisms and with enough injective objects, and $\A\subseteq\B$ a full subcategory having 2-out-of-3 for short exact sequences and containing all injective objects.
Then there exists a Waldhausen category $(\B, w_\A)$ with admissible monomorphisms as cofibrations, such that
$$K(\A)\to K(\B)\to K(\B,w_\A)$$ is a homotopy fibration sequence.
\end{thm}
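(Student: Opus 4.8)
The plan is to run the machinery of Theorem~\ref{the_thm} on the injective cotorsion pair, verify that the Waldhausen category it produces satisfies the axioms needed to apply a fibration theorem, identify its subcategory of acyclic objects with $\A$, and then deduce the fibration sequence. Concretely, since $\B$ has enough injective objects the pair $(\C,\C^\bot)=(\B,\inj)$ is a complete cotorsion pair in $\B$, and it is hereditary: $\B$ is closed under kernels of admissible epimorphisms by hypothesis, and $\inj$ is closed under cokernels of admissible monomorphisms by a chase of the long exact $\Ext$-sequence. Taking $\Z=\A$ we get $\Z\cap\C=\A$, which is closed under extensions and under cokernels of admissible monomorphisms --- both special cases of the $2$-out-of-$3$ property we assumed --- and $\C^\bot=\inj\subseteq\A=\Z$ by hypothesis. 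Thus Theorem~\ref{the_thm} applies and yields the Waldhausen structure $(\B,w_\A)$, whose cofibrations are all admissible monomorphisms (as $\C=\B$) and whose weak equivalences are the maps factoring as an admissible monomorphism with cokernel in $\A$ followed by an admissible epimorphism with kernel in $\inj$.

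Next I would record the extra structure. By Proposition~\ref{extension} the category $(\B,w_\A)$ satisfies the extension axiom, and by Proposition~\ref{saturation} it satisfies the saturation axiom, since $\A$ has $2$-out-of-$3$ for short exact sequences. By Lemma~\ref{acyclic_obj}, for $B\in\B$ the map $0\to B$ is a weak equivalence precisely when $B\in\A$; so the Waldhausen subcategory $\B^{w_\A}$ of $w_\A$-acyclic objects equals $\A$, with admissible monomorphisms as cofibrations (their cokernels, taken between objects of $\A$, lie in $\A$ automatically) and isomorphisms as weak equivalences, whence $K(\B^{w_\A})=K(\A)$ by Remark~\ref{exact_are_waldhausen}.

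The desired homotopy fibration sequence is then exactly the output of Waldhausen's Fibration Theorem applied to the nested classes of weak equivalences given by the isomorphisms inside $w_\A$ on $\B$, which produces $K(\B^{w_\A})\to K(\B)\to K(\B,w_\A)$. Apart from the extension and saturation axioms just checked, that theorem requires a cylinder functor satisfying the cylinder axiom --- and this is the main obstacle, because a bare exact category carries no cylinder functor compatible with $w_\A$: the naive cylinder $A\oplus B$ of a morphism $f\colon A\to B$ does not satisfy the cylinder axiom, since the projection $A\oplus B\to B$, whose kernel is $A$, need not be a $w_\A$-equivalence.

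To circumvent this I would transfer the problem to bounded complexes. On $\Ch(\B)$ --- which is idempotent complete because $\B$ is closed under kernels of epimorphisms --- take the admissible monomorphisms of complexes as cofibrations and, as nested classes of weak equivalences, the quasi-isomorphisms inside the larger class of ``$\A$-quasi-isomorphisms'', the maps whose mapping cone is quasi-isomorphic to a bounded complex of objects of $\A$; the usual mapping cylinder of complexes serves as a cylinder functor for both. Then the Gillet--Waldhausen theorem identifies the $K$-theory of $\Ch(\B)$ with quasi-isomorphisms with $K(\B)$; a resolution argument, using that $\A$ is closed under the relevant kernels and cokernels, identifies the $K$-theory of the complexes acyclic for the $\A$-quasi-isomorphisms with $K(\A)$; and Waldhausen's Approximation Theorem, comparing a complex with the object of $\B$ that replaces it up to $w_\A$-equivalence (produced by the cotorsion-pair factorization of Proposition~\ref{factorization}), identifies the $K$-theory of $\Ch(\B)$ with $\A$-quasi-isomorphisms with $K(\B,w_\A)$. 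Feeding these three identifications into the fibration theorem for $\Ch(\B)$ produces the claimed sequence. I expect the last two identifications --- the resolution computation of the acyclic term and the Approximation comparison --- to be the genuinely hard part, as they are precisely where the completeness and hereditariness of the cotorsion pair $(\B,\inj)$ get used.
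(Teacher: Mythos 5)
Your setup is exactly the paper's: the cotorsion pair $(\B,\inj)$ is complete and hereditary under the stated hypotheses, $\Z=\A$ satisfies the closure conditions, Theorem \ref{the_thm} produces $(\B,w_\A)$ with all admissible monomorphisms as cofibrations, Lemma \ref{acyclic_obj} identifies $\B^{w_\A}=\A$, and Propositions \ref{extension} and \ref{saturation} give the extension and saturation axioms. You also correctly diagnose the obstacle to invoking Waldhausen's Fibration Theorem in its classical form: a bare exact category carries no cylinder functor compatible with $w_\A$.

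Where your argument breaks down is in how you resolve that obstacle. The paper does not need a cylinder functor at all: it applies Schlichting's cylinder-free version of the Fibration Theorem \cite[Thm.\ A.3]{Sch06}, whose hypotheses replace the cylinder axiom by the requirement that every map factor as a cofibration followed by a weak equivalence --- and this is precisely what Proposition \ref{factorization} together with Lemma \ref{we_cofib} supplies, since an acyclic fibration has kernel in $\inj\subseteq\A$ and is therefore a weak equivalence. Your substitute route through $\Ch(\B)$ is not a proof as it stands: you define the ``$\A$-quasi-isomorphisms'' but do not verify they form a Waldhausen class of weak equivalences satisfying gluing, extension, and saturation on $\Ch(\B)$; the identification of the acyclic term with $K(\A)$ by a resolution argument and, especially, the Approximation Theorem comparison between $\Ch(\B)$ with $\A$-quasi-isomorphisms and $(\B,w_\A)$ are exactly the steps you flag as ``genuinely hard'' and leave unproven (the approximation property (App2) is far from automatic here). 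Since the whole point of the factorization machinery built from the cotorsion pair is to feed the cylinder-free fibration theorem directly, the detour is both unnecessary and, in the form presented, incomplete.
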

\begin{proof}
Apply Theorem \ref{the_thm} to the exact category $\E=\B$, by considering the subcategories $\C=\B$ and $\Z=\A$. Note that $\B$ is always part of a cotorsion pair $(\B,\B^\bot)$ with respect to the functor $\Ext^1_\B$; in this case, $\B^\bot=\inj$, the subcategory of injective objects in $\B$. Since $\B$ has enough injective objects, the cotorsion pair $(\B,\inj)$ is complete; furthermore, it is hereditary because $\B$ is assumed to be closed under kernels of epimorphisms. By assumption, $\inj\subseteq\A$, and $\A$ has 2-out-of-3 for exact sequences, so, in particular, it is closed under extensions and cokernels of admissible monomorphisms.

Let $(\B,w_\A)$ denote the Waldhausen category obtained through Theorem \ref{the_thm}; by construction this has admissible monomorphisms as cofibrations. Also, Lemma \ref{acyclic_obj} shows that $B^{w_\A}=\A$, where $\B^{w_\A}$ is the standard notation for the full subcategory of $\B$ consisting of those objects $A\in\B$ such that $0\to A$ is a weak equivalence in $(\B,w_\A)$.

Since $(\B,w_\A)$ is such that every map factors as a cofibration followed by a weak equivalence by Proposition \ref{factorization} and Lemma \ref{we_cofib}, and it satisfies the extension (Proposition \ref{extension}) and saturation (Proposition \ref{saturation}) axioms, we can apply Schlichting's cylinder-free version of Waldhausen's fibration theorem \cite[Thm.\ A.3]{Sch06} to the inclusion $(\B,\text{isos})\subset (\B,w_\A)$ to get the desired homotopy fibration sequence $$K(\A)\to K(\B)\to K(\B,w_\A)$$
\end{proof}

\begin{rmk}\label{alternate_way}
Note that our Localization Theorem above is not an extension of Quillen's Localization Theorem, but rather an alternate way of obtaining homotopy fibration sequences from inclusions $\A\subseteq\B$.

Indeed, suppose $\A$ and $\B$ are in the hypotheses of both Localization Theorems, and let $X\in\B$ be any object. Since $\B$ has enough injectives, there exists an embedding $X\hookrightarrow I$ into an injective object. Then $I\in\A$, since it contains all injective objects. However, $\A$ is a Serre subcategory, and therefore closed under subobjects; thus $X\in\A$ and we see that $\A=\B$.
\end{rmk}

\begin{thm}\label{universal_prop}
The functor $\B\xrightarrow{\id_\B} (\B,w_\A)$ is universal among exact functors $F\colon\B\to\C$ such that $0\to FA$ is a weak equivalence for each $A\in\A$, where $\C$ is a Waldhausen category satisfying the extension and saturation axioms.
\end{thm}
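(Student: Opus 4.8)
The plan is to establish the universal property in the standard way: given an exact functor $F\colon\B\to\C$ into a Waldhausen category satisfying extension and saturation, such that $0\to FA$ is a weak equivalence for every $A\in\A$, I would show that $F$ factors uniquely through $\id_\B\colon\B\to(\B,w_\A)$. Uniqueness is immediate, since $\id_\B$ is the identity on objects and morphisms; the whole content is that $F\colon(\B,w_\A)\to\C$ is itself an exact functor of Waldhausen categories, i.e.\ that $F$ sends the weak equivalences $w_\A$ to weak equivalences in $\C$. (It already preserves $0$, cofibrations, and pushouts along cofibrations, being exact as a functor of exact categories, and admissible monomorphisms in $\B$ are exactly the cofibrations of $(\B,w_\A)$.)

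So the crux is: if $f\colon B\to B'$ is in $w_\A$, then $Ff$ is a weak equivalence in $\C$. By the definition of $w_\A$ in Theorem \ref{the_thm} (with $\E=\C=\B$, $\Z=\A$), $f$ factors as an admissible monomorphism $i\colon B\cof \overline{B}$ with $\cok i\in\A$, followed by an admissible epimorphism $p\colon\overline{B}\twoheadrightarrow B'$ with $\ker p\in\B^\bot=\inj\subseteq\A$. Since $\C$ has saturation (hence satisfies $2$-out-of-$3$ for weak equivalences), it suffices to treat the two factors separately. For the cofibration $i$: apply $F$ to the short exact sequence $0\to B\xrightarrow{i}\overline{B}\to\cok i\to 0$ to get a cofiber sequence in $\C$ (here I use that $F$ is exact, so it carries admissible short exact sequences to cofiber sequences, the cokernel being computed as a pushout along a cofibration). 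Now consider the map of short exact sequences from $0\to B\xrightarrow{=}B\to 0\to 0$ down to $0\to B\xrightarrow{Fi}F\overline{B}\to F\cok i\to 0$: the left vertical is $\id_{FB}$, the right vertical is $0\to F\cok i$, which is a weak equivalence by hypothesis since $\cok i\in\A$; both are weak equivalences, so by the extension axiom the middle vertical $Fi$ is a weak equivalence. Dually — but one has to be slightly careful since the extension axiom as stated is not manifestly self-dual in a Waldhausen category — for $p$: here $\ker p\in\inj\subseteq\A$, and I would compare $0\to\ker p\to\overline{B}\xrightarrow{p}B'\to 0$ with $0\to 0\to B'\xrightarrow{=}B'\to 0$; applying $F$ and invoking the extension axiom (with the weak equivalences $0\to F\ker p$ on the left and $\id_{FB'}$ on the right) shows $Fp$ is a weak equivalence. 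Then $Ff=(Fp)(Fi)$ is a composite of weak equivalences, hence a weak equivalence.

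I expect the main obstacle to be the handling of the admissible-epimorphism factor $p$ via the extension axiom: the extension axiom in a general Waldhausen category is phrased in terms of a cofibration sequence $A\cof B\to C$, and to apply it to $0\to\ker p\to\overline B\to B'\to 0$ one needs this to be recognized as a cofiber sequence after applying $F$, which requires $\ker p\cof\overline B$ to be a cofibration in $(\B,w_\A)$ — it is, being an admissible monomorphism — and its cokernel in the Waldhausen sense to agree with $B'$; this is fine because admissible short exact sequences in $\B$ are genuine cofiber sequences. One subtlety worth a sentence in the writeup: the comparison diagram for $p$ has $0\to F\ker p$ as a \emph{vertical} map, which is a weak equivalence precisely because $\ker p\in\inj\subseteq\A$ and $F$ satisfies the hypothesis on objects of $\A$; so the hypothesis ``$0\to FA$ is a weak equivalence for $A\in\A$'' is used in both halves, and the assumption $\inj\subseteq\A$ (equivalently $\B^\bot\subseteq\Z$) is exactly what makes the epimorphism half go through. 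Finally, I would remark that saturation of $\C$ is what lets us decompose the verification across the two factors of $f$, and that this is the role of that hypothesis on $\C$ in the statement.
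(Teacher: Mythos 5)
Your overall strategy is the same as the paper's: reduce to showing that $F$ carries the two factors of a weak equivalence (an acyclic cofibration and an acyclic fibration) to weak equivalences of $\C$, and handle each factor with the extension axiom. The cofibration half is correct and matches the paper. The epimorphism half, however, has a genuine gap, and it sits exactly at the point you flagged as needing care but then did not resolve. To apply the extension axiom with middle vertical $Fp\colon F\overline{B}\to FB'$, the comparison of cofiber sequences must go from $0\to F\ker p\to F\overline{B}\to FB'\to 0$ down to $0\to 0\to FB'\to FB'\to 0$; the left vertical map is therefore $F\ker p\to 0$, not $0\to F\ker p$ as you wrote (the opposite orientation would require a map $FB'\to F\overline{B}$, which does not exist). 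Your hypothesis only provides that $0\to F\ker p$ is a weak equivalence, and in a general Waldhausen category this does not imply that $F\ker p\to 0$ is one. This is precisely where the saturation hypothesis on $\C$ enters in the paper's proof: the composite $0\to F\ker p\to 0$ is $\id_0$, an isomorphism and hence a weak equivalence, and $0\to F\ker p$ is a weak equivalence, so saturation forces $F\ker p\to 0$ to be a weak equivalence; only then does the extension axiom yield that $Fp$ is a weak equivalence.

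Relatedly, you attribute saturation to the wrong step. You claim it is needed ``to decompose the verification across the two factors of $f$,'' but weak equivalences form a subcategory of any Waldhausen category, so $Ff=(Fp)(Fi)$ is automatically a weak equivalence once each factor is; no saturation is required there. Once you redirect the saturation hypothesis to the reversal from $0\to F\ker p$ to $F\ker p\to 0$, your argument coincides with the paper's proof.
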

\begin{proof}
Let $\C$ be a Waldhausen category satisfying the extension and saturation axioms, and $F\colon\B\to\C$ an exact functor. In order to prove the result, it suffices to show that the functor $F\colon (\B,w_\A)\to\C$ is exact. Since $\B$ and $(\B,w_\A)$ have the same underlying category and the same cofibrations, we need only show that $F$ takes weak equivalences in $(\B,w_\A)$ to weak equivalences in $\C$. Recalling that weak equivalences in $(\B,w_\A)$ factor as an acyclic cofibration followed by an acyclic fibration, we show that $F$ takes these two classes of morphisms to weak equivalences in $\C$.

Let $i\colon A\xhookrightarrow{\sim} B$ be an acyclic cofibration in $(\B,w_\A)$. Since $F$ is exact, it preserves exact sequences, and we can consider the following diagram in $\C$
\[\btk
0\rar & FA\rar[equal]\dar[equal] & FA\rar\dar[hookrightarrow,"Fi"] & 0\rar\dar & 0\\
0\rar & FA\rar[hookrightarrow,"Fi"] & FB\rar & \cok Fi\rar & 0
\etk\]  However, $\cok i\in\A$ since $i$ is an acyclic cofibration. By assumption, this implies that $0\to F\cok i$ is a weak equivalence; but $F\cok i\cong\cok Fi$ and thus $Fi$ is a weak equivalence by the extension axiom on $\C$.

Now, let$\btk[column sep=small] p:A\rar[twoheadrightarrow,"\sim"] & B\etk$be an acyclic fibration, and consider the diagram
\[\btk
0\rar & F\ker p\rar\dar & FA\dar["Fp"]\rar["Fp"] & FB\rar\dar[equal] & 0\\
0\rar & 0\rar & FB\rar[equal] & FB\rar & 0
\etk\] We have $\ker p\in\A$, so by assumption $0\to F\ker p$ is a weak equivalence. Thus, the saturation axiom applied to the maps
\[\btk[column sep=small, row sep=small]
& 0\ar[dl,"\sim"']\ar[dr,equal] & \\
F\ker p\ar[rr] & & 0
\etk\] tells us the map $F\ker p\to 0$ is a weak equivalence; the extension axiom then implies that $Fp$ is a weak equivalence.
\end{proof}

\section{The Resolution Theorem}

In this section, we show that Quillen's Resolution Theorem, valid for exact categories with isomorphisms as weak equivalences, also has a natural formulation in our setting of more general weak equivalences.

Let $\C$ be an exact category. Recall that a {\it resolution} of an object $A$ is a sequence of maps in $\C$ $$\dots\to B_n\xrightarrow{d_n} B_{n-1}\to\dots\to B_1\xrightarrow{d_1} B_0\xrightarrow{d_0} A$$ such that for each $n\geq0$, the map $d_n$ factors as
\[\btk[column sep=small, row sep=small]
B_n\ar[rr,"d_n"]\ar[dr,"p_n"', twoheadrightarrow] & & B_{n-1}\\
& Z_n\ar[ur,"i_n"', hookrightarrow] &
\etk\] where $B_{-1}\coloneqq A, p_0=d_0, i_0=1_A$, and $$0\to Z_{n+1}\xrightarrow{i_{n+1}} B_n\xrightarrow{p_n} Z_n\to 0$$ is an exact sequence in $\C$. Given a subcategory $\P\subseteq\C$, we say the above is a {\it $\P$-resolution} if $B_n\in\P$ for each $n\geq 0$.

We begin by stating Quillen's original theorem.

\begin{thm}
Let $\P$ be a full exact subcategory of an exact category $\C$, such that $\P$ is closed under extensions and kernels of admissible epimorphisms in $\C$. If every object $A\in\C$ admits a finite $\P$-resolution $$0\to P_n\to\cdots\to P_1\to P_0\to A\to 0,$$ then $K(\P)\simeq K(\C)$.
\end{thm}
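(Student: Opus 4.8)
The plan is to follow Quillen's original argument via his Theorem~A, after reducing to the case of $\P$-resolutions of length one. First I would filter $\C$ by the full subcategories $\C_n\subseteq\C$ of objects admitting a $\P$-resolution of length $\le n$, so that $\C_0=\P$ and, by hypothesis, $\bigcup_n\C_n=\C$. A preliminary lemma — proved by a sequence of pullback manipulations along chosen $\P$-covers, using both that $\P$ is closed under extensions and that it is closed under kernels of admissible epimorphisms — shows that each $\C_n$ is an exact subcategory of $\C$ which is again closed under kernels of admissible epimorphisms. Since algebraic $K$-theory commutes with the filtered colimit of the exact inclusions $\C_0\hookrightarrow\C_1\hookrightarrow\cdots$, we obtain $K(\C)\simeq\colim_n K(\C_n)$, so it is enough to prove that each inclusion $\C_{n-1}\hookrightarrow\C_n$ induces an equivalence on $K$-theory. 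Here $\C_{n-1}$ satisfies, inside $\C_n$, precisely the hypotheses of the theorem, and moreover every object $M$ of $\C_n$ fits into an exact sequence $0\to\Omega M\to P_0\to M\to0$ with $P_0\in\P\subseteq\C_{n-1}$ and $\Omega M\in\C_{n-1}$, that is, a $\C_{n-1}$-resolution of length $\le 1$. Thus we may assume from the outset that every object of $\C$ admits a $\P$-resolution of length $\le 1$.

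For this base case I would invoke Quillen's Theorem~A for the functor of $Q$-constructions $Q\P\to Q\C$: it suffices to show that, for every object $M$ of $\C$, the comma category $Q\P/M$ has contractible nerve. An object of $Q\P/M$ consists of an object of $\P$ together with a morphism to $M$ in $Q\C$, and the contraction is built from the operation of choosing a $\P$-cover $P\twoheadrightarrow N$ and passing to its kernel: one checks that iterating this operation twice is connected by natural transformations, inside $Q\P/M$, to a constant functor. This uses that such kernels remain in $\P$ — which is exactly where the two closure hypotheses and the length-$\le1$ assumption come in — together with the fact that any two $\P$-covers of an object of $\C$ are linked by a span of $\P$-covers, a form of Schanuel's lemma. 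This is the computation carried out by Quillen in the proof of the classical Resolution Theorem.

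The step I expect to be the main obstacle is precisely this last one: the contractibility of the comma categories $Q\P/M$ rests on a somewhat delicate explicit homotopy and on a careful bookkeeping of subquotients in the $Q$-construction, rather than on formal arguments. An alternative route — conceptually cleaner but importing heavier input, and requiring the mild additional hypothesis that $\C$ and $\P$ be (weakly) idempotent complete — is to invoke the Gillet--Waldhausen theorem to identify $K(\C)$ and $K(\P)$ with the $K$-theories of the Waldhausen categories $\Ch(\C)$ and $\Ch(\P)$ of bounded complexes with quasi-isomorphisms as weak equivalences; a finite $\P$-resolution of each object then lets one replace any bounded complex over $\C$ by a quasi-isomorphic complex over $\P$, so that the inclusion $\Ch(\P)\hookrightarrow\Ch(\C)$ satisfies the hypotheses of Waldhausen's Approximation Theorem (it preserves and reflects quasi-isomorphisms, and, via a mapping-cylinder argument, every map $\iota X\to Y$ factors as a cofibration $X\hookrightarrow X'$ into a complex over $\P$ followed by a quasi-isomorphism $\iota X'\we Y$), yielding the desired equivalence $K(\P)\simeq K(\C)$.
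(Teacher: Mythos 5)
This statement is Quillen's classical Resolution Theorem, which the paper states without proof (it is the cited input to Theorem \ref{resolution}), so there is no internal argument to compare against; your proposal is, in substance, Quillen's original proof. The two-stage structure is exactly his: filter $\C$ by the subcategories $\C_n$ of objects with $\P$-resolutions of length $\le n$, check that each $\C_n$ is an extension-closed exact subcategory closed under kernels of admissible epimorphisms (this is the routine but fiddly pullback/Schanuel lemma you allude to), use that $K$-theory commutes with the filtered union to reduce to the successive inclusions, observe that $\C_{n-1}\subseteq\C_n$ satisfies the hypotheses with resolutions of length $\le 1$, and finally handle the length-one case by Theorem~A applied to $Q\P\to Q\C$. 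You are right that the only genuinely delicate point is the contractibility of the comma categories, which rests on Quillen's explicit homotopy with subquotients in the $Q$-construction rather than on anything formal; since you defer that computation to Quillen rather than reproving it, the proposal is an outline of the standard proof rather than a self-contained one, but the outline is correct and complete in its reductions. Your alternative route via Gillet--Waldhausen and the Approximation Theorem is also valid (under the idempotent-completeness caveat you note) and is in fact closer in spirit to the Waldhausen-theoretic toolkit this paper uses elsewhere.
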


This result can be adapted to our setting as follows.

\begin{thm}\label{resolution}
Let $\E$ be an exact category, and $\C, \Z$ two full subcategories of $\E$ such that $\C$ is closed under kernels of admissible epimorphisms and is part of a complete cotorsion pair $(\C,\C^\bot)$, with $\C^\bot\subseteq\Z$, and such that $\Z\cap\C$ has 2-out-of-3 for short exact sequences in $\C$.

Let $\P$ be a full subcategory of $\C$ such that $\P$ is closed under extensions and kernels of admissible epimorphisms in $\C$. In addition, assume that the cotorsion pair $(\P,\P^\bot)$ (defined with respect to $\Ext^1_P$) 
is complete, and that $\P^\bot\subseteq\Z$. If every object $A\in\C$ admits a finite $\P$-resolution $$0\to P_n\to\cdots\to P_1\to P_0\to A\to 0,$$ then $K(\P, w_{\Z\cap\P})\simeq K(\C,w_\Z)$.
\end{thm}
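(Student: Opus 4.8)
The plan is to realize both $K(\P,w_{\Z\cap\P})$ and $K(\C,w_\Z)$ as the third terms of localization sequences of the type produced above, and then to compare them using Quillen's classical Resolution Theorem, applied twice.

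First I would verify that $(\C,w_\Z)$ and $(\P,w_{\Z\cap\P})$ are both Waldhausen categories in the sense of Theorem~\ref{the_thm}. For $(\C,w_\Z)$ this is immediate: $(\C,\C^\bot)$ is complete by hypothesis and hereditary since $\C$ is closed under kernels of admissible epimorphisms, $\C^\bot\subseteq\Z$, and $\Z\cap\C$ is closed under extensions and cokernels of admissible monomorphisms because it has $2$-out-of-$3$. For $(\P,w_{\Z\cap\P})$ I would apply Theorem~\ref{the_thm} with $\P$ itself as the ambient exact category, taking $\C:=\P$ and $\Z:=\Z\cap\P$: then $\Z\cap\P$ inherits the needed closure properties from $\Z\cap\C$, the cotorsion pair $(\P,\P^\bot)$ is complete by hypothesis, and $\P^\bot\subseteq\P\cap\Z$ since $\P^\bot$ is a class of objects of $\P$ contained in $\Z$. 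Both categories satisfy the extension axiom (Proposition~\ref{extension}) and the saturation axiom (Proposition~\ref{saturation}, using that $\Z\cap\C$ has $2$-out-of-$3$ for short exact sequences in $\C$, hence---a conflation in $\P$ being a conflation in $\C$---so does $\Z\cap\P$ in $\P$), and in both every map factors as a cofibration followed by a weak equivalence (Proposition~\ref{factorization} and Lemma~\ref{we_cofib}); by Lemma~\ref{acyclic_obj} their subcategories of acyclic objects are $\Z\cap\C$ and $\Z\cap\P$ respectively.

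Exactly as in the proof of Theorem~\ref{localization}, I would then apply Schlichting's cylinder-free fibration theorem to the inclusions $(\C,\mathrm{isos})\hookrightarrow(\C,w_\Z)$ and $(\P,\mathrm{isos})\hookrightarrow(\P,w_{\Z\cap\P})$, obtaining homotopy fibration sequences
\[ K(\Z\cap\P)\to K(\P)\to K(\P,w_{\Z\cap\P}), \qquad K(\Z\cap\C)\to K(\C)\to K(\C,w_\Z),\]
natural in the inclusion $\P\hookrightarrow\C$ (which is exact for all three Waldhausen structures involved, sending $w_{\Z\cap\P}$-equivalences to $w_\Z$-equivalences by Lemma~\ref{we_cofib} and sending acyclics to acyclics). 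It therefore suffices to prove that the inclusion induces equivalences $K(\P)\we K(\C)$ and $K(\Z\cap\P)\we K(\Z\cap\C)$, since then the five lemma applied to the long exact sequences of homotopy groups forces $K(\P,w_{\Z\cap\P})\to K(\C,w_\Z)$ to be an equivalence.

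The first equivalence is precisely Quillen's Resolution Theorem: $\P\subseteq\C$ is closed under extensions and kernels of admissible epimorphisms, and every object of $\C$ has a finite $\P$-resolution. For the second I would invoke the Resolution Theorem again, for $\Z\cap\P\subseteq\Z\cap\C$; one checks directly that $\Z\cap\P$ is closed under extensions and kernels of admissible epimorphisms inside $\Z\cap\C$ (from the corresponding closure of $\P$ in $\C$ together with the $2$-out-of-$3$ property of $\Z\cap\C$). The remaining---and main---point is to show that \emph{every $X\in\Z\cap\C$ admits a finite $(\Z\cap\P)$-resolution.} I would argue by induction on the length of a given finite $\P$-resolution of $X$: a standard dimension-shifting (Schanuel/horseshoe) computation in $\C$ shows that if $0\to Z_1\to P_0\to X\to 0$ with $P_0\in\P$, then $Z_1\in\C$ has a finite $\P$-resolution of length one less. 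The crux is to replace $P_0$ by an object of $\Z\cap\P$ still admitting an admissible epimorphism onto $X$: using completeness of $(\C,\C^\bot)$ with $\C^\bot\subseteq\Z$ one produces an admissible epimorphism onto $X$ with kernel in $\Z$, and using completeness of $(\P,\P^\bot)$ with $\P^\bot\subseteq\Z$ one arranges the source to lie in $\P$; since the source then lies in $\Z\cap\C$ and $X\in\Z$, the $2$-out-of-$3$ property of $\Z\cap\C$ forces the new kernel into $\Z\cap\C$, where its $\P$-resolution length has dropped, so the induction closes and splicing yields the desired $(\Z\cap\P)$-resolution. Making this ``special $\P$-precover inside $\Z$'' construction genuinely terminate---morally an Auslander--Buchweitz-type approximation assembled from the two interacting cotorsion pairs---is where the real work lies, and it is here that the hypotheses $\C^\bot\subseteq\Z$, $\P^\bot\subseteq\Z$ and the $2$-out-of-$3$ property of $\Z\cap\C$ are all used essentially. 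Granting this, the Resolution Theorem gives $K(\Z\cap\P)\we K(\Z\cap\C)$ and the proof concludes. (An alternative to the last two paragraphs would be to verify the hypotheses of a cylinder-free version of Waldhausen's Approximation Theorem for the inclusion $(\P,w_{\Z\cap\P})\hookrightarrow(\C,w_\Z)$ directly; the approximation property reduces, by the same precover argument, to the same statement about resolving objects of $\C$ by $\P$ modulo $\Z$.)
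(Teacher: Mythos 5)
Your overall architecture is exactly the paper's: set up both Waldhausen structures via Theorem \ref{the_thm}, apply Schlichting's cylinder-free fibration theorem to the ladder of inclusions, dispose of $K(\P)\to K(\C)$ by Quillen's Resolution Theorem, and reduce everything to showing that every object of $\Z\cap\C$ admits a finite $(\Z\cap\P)$-resolution. You are also right that this last point is where all the work lies --- but your sketch of it has a genuine gap. First, the proposed construction of a $(\Z\cap\P)$-cover of $X\in\Z\cap\C$ does not go through as stated: completeness of $(\C,\C^\bot)$ produces an admissible epimorphism onto $X$ whose \emph{source} lies only in $\C$, and completeness of $(\P,\P^\bot)$ is taken with respect to $\Ext^1_\P$, so it can only resolve objects that are \emph{already} in $\P$; it cannot be used to ``arrange the source to lie in $\P$.'' Second, even granting such a cover, the claim that its kernel has $\P$-resolution length one less than that of $X$ is a Schanuel-type comparison of two different covers, which fails in a general exact category without projectivity hypotheses on the covering objects; so the induction does not close as written.

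The paper's construction avoids both problems by never trying to produce the cover of $X$ first. It processes the \emph{given} $\P$-resolution from the far end: embed $P_n$ into some $Z_n\in\P^\bot\subseteq\Z\cap\P$ using the (legitimate) half of completeness of $(\P,\P^\bot)$ that applies to objects of $\P$, push out along $P_n\hookrightarrow Z_n$ to modify $P_{n-1}$, check the result is still in $\P$ (closure under extensions, since the relevant cokernels stay in $\P$), embed again into $\P^\bot$, and push out the syzygies $C_i$ to new syzygies $D_i$, which land in $\Z$ by the 2-out-of-3 property of $\Z\cap\C$. Only at the last step does one obtain $Z_0=D_1+_{C_1}P_0\in\Z\cap\P$ covering $X$. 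Note that completeness of $(\C,\C^\bot)$ plays no role in this construction at all; the interaction is entirely between $(\P,\P^\bot)$, the pushouts, and 2-out-of-3 for $\Z\cap\C$. So while you have correctly located the crux and the relevant hypotheses, the inductive ``replace the cover and shorten the resolution'' strategy would need to be replaced by (or substantially reworked into) this left-to-right approximation argument.
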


Before proceeding with the proof of this result, note that the full subcategory $\Z\cap\P$ has 2-out-of-3 for exact sequences in $\P$, since these are also exact in $\C$, where $\Z$ has the 2-out-of-3 property. 
This fact, together with the additional assumptions that $\P$ is closed under kernels of admissible epimorphisms, that the pair $(\P,\P^\bot)$ is complete and that $\P^\bot\subseteq\Z$ (and thus, since $\P^\bot\subseteq\P$ by definition, we have $\P^\bot\subseteq\Z\cap\P$) allow us to apply Theorem \ref{the_thm} to define the aforementioned Waldhausen category structure on the exact category $\P$.

Finally, one can verify that the inclusion $i\colon\P\hookrightarrow\C$ is an exact functor, although curiously it does not exhibit $\P$ as a Waldhausen subcategory of $\C$.

\begin{proof}
Let $(\C,w_\Z)$ denote the Waldhausen category structure on $\C$ with weak equivalences given by Theorem \ref{the_thm}, and $(\C,\text{isos})$ denote the usual Waldhausen category structure on $\C$ with weak equivalences given by isomorphisms.

By Proposition \ref{acyclic_obj}, we know that $\Z\cap\C$ is precisely the full subcategory of objects $A$ in $\C$ such that the map $0\to A$ is a weak equivalence; thus, $\Z\cap\C=\C^{w_\Z}$, where the latter is the commonly used notation for this class.

Since both $(\C,w_\Z)$ and $(\P,w_{\Z\cap\P})$ are such that every map factors as a cofibration followed by a weak equivalence by Proposition \ref{factorization} and Lemma \ref{we_cofib}, and they satisfy the extension (Proposition \ref{extension}) and saturation (Proposition \ref{saturation}) axioms, we can apply Schlichting's cylinder-free version of Waldhausen's fibration theorem \cite[Thm.\ A.3]{Sch06} to get the following diagram, whose horizontal rows are homotopy fibrations
\[\btk
K(\Z\cap\P,\text{isos})\rar\dar["k"] & K(\P,\text{isos})\rar\dar["j"] & K(\P,w_{\Z\cap\P})\dar["i"]\\
K(\Z\cap\C,\text{isos})\rar & K(\C,\text{isos})\rar & K(\C,w_\Z)
\etk\]

The functor $j$ is a homotopy equivalence, as given by Quillen's Resolution Theorem. Then, if we consider the $K$-theory spectra, it suffices to show that $k$ induces a homotopy equivalence as well, since in the stable case the homotopy fibration sequences are also homotopy cofibration sequences, and thus the two cofibers would be uniquely determined up to homotopy. To achieve this, we show that $(\Z\cap\P,\text{isos})$ and $(\Z\cap\C,\text{isos})$ also satisfy the hypotheses of the Resolution Theorem; that is, we must check that every object in $\Z\cap\C$ admits a finite resolution by objects in $\Z\cap\P$.

Let $A$ be an object in $\Z\cap\C$; then, as an object in $\C$, it admits a finite $\P$-resolution
$$0\to P_n\to\cdots\to P_1\to P_0\to A\to 0.$$ That means there exist exact sequences in $\C$
$$0\to P_n\hookrightarrow P_{n-1}\twoheadrightarrow C_{n-1}\to 0$$
$$0\to C_{n-1}\hookrightarrow P_{n-2}\twoheadrightarrow C_{n-2}\to 0$$
$$\vdots$$
$$0\to C_2\hookrightarrow P_1\twoheadrightarrow C_1\to 0$$
$$0\to C_1\hookrightarrow P_0\twoheadrightarrow A\to 0$$

Since $(\P,\P^\bot)$ is complete, there exists an exact sequence $$0\to P_n\hookrightarrow Z_n\twoheadrightarrow C'\to 0$$ with $Z_n\in\P^\bot$ and $C'\in\P$. Recall that $\P^\bot\subset\P$, since this cotorsion pair is defined with respect to the functor $\Ext^1_\P$, and that also, by assumption, we have $\P^\bot\subseteq\Z$; thus, $Z_n\in\Z\cap\P$. Consider the diagram
\[\btk
0\rar & P_n\rar[hookrightarrow]\dar[hookrightarrow] & P_{n-1}\dar[hookrightarrow]\rar[twoheadrightarrow] & C_{n-1}\dar[equal]\rar & 0\\
0\rar & Z_n\rar[hookrightarrow] & Z_n+_{P_n} P_{n-1}\rar[twoheadrightarrow] & C_{n-1}\rar & 0
\etk\] Rename $Q\coloneqq Z_n+_{P_n} P_{n-1}$; we can similarly find a resolution $$0\to Q\hookrightarrow Z_{n-1}\twoheadrightarrow C''\to 0$$ with $Z_{n-1}\in\P^\bot\subseteq\Z\cap\P$ and $C''\in\P$. Then, we construct the diagram
\[\btk
0\rar & Z_n\dar[equal]\rar[hookrightarrow] & Q\dar[hookrightarrow]\rar[twoheadrightarrow] & C_{n-1}\dar[hookrightarrow]\rar & 0\\
0\rar & Z_n\rar[hookrightarrow] & Z_{n-1}\rar[twoheadrightarrow] & Z_{n-1} +_{Q}C_{n-1} \rar & 0
\etk\] by first taking the pushout square on the right; since $Q\hookrightarrow Z_{n-1}$ is a monomorphism, this is also a pullback square, and thus we get the pictured identity map between the kernels of the two horizontal admissible epimorphisms.

Rename $D_{n-1}\coloneqq Z_{n-1} +_{Q}C_{n-1}$; given that $\Z\cap\C$ has 2-out-of-3 for exact sequences, we have $D_{n-1}\in\Z$. Now consider the diagram
\[\btk
0\rar & C_{n-1}\rar[hookrightarrow]\dar[hookrightarrow] & P_{n-2}\dar[hookrightarrow]\rar[twoheadrightarrow] & C_{n-2}\dar[equal]\rar & 0\\
0\rar & D_{n-1}\rar[hookrightarrow] & D_{n-1}+_{C_{n-1}} P_{n-2}\rar[twoheadrightarrow] & C_{n-2}\rar & 0
\etk\] Rename $Q_{n-2}\coloneqq D_{n-1}+_{C_{n-1}} P_{n-2}$. Since pushouts preserve cokernels, we see that
\begin{align*}
\cok(P_{n-2}\hookrightarrow Q_{n-2}) & \simeq \cok(C_{n-1}\hookrightarrow D_{n-1})\\
& \simeq \cok(Q\hookrightarrow Z_{n-1})\\
& \simeq C''\in\P
\end{align*} and thus, using the fact that $\P$ is closed under extensions and that $P_{n-2},C''\in\P$, we get that $Q_{n-2}\in\P$.

We can now use the completeness of the cotorsion pair $(\P,\P^\bot)$ again, to get an admissible monomorphism $Q_{n-2}\hookrightarrow Z_{n-2}$ for some $Z_{n-2}\in\Z\cap\P$. Then, we construct the diagram
\[\btk
0\rar & D_{n-1}\dar[equal]\rar[hookrightarrow] & Q_{n-2}\dar[hookrightarrow]\rar[twoheadrightarrow] & C_{n-2}\dar[hookrightarrow]\rar & 0\\
0\rar & D_{n-1}\rar[hookrightarrow] & Z_{n-2}\rar[twoheadrightarrow] & Z_{n-2} +_{Q_{n-2}}C_{n-2} \rar & 0
\etk\] where the square on the right is cocartesian. Naming $D_{n-2}\coloneqq Z_{n-2} +_{Q_{n-2}}C_{n-2}$ and using the fact that $\Z\cap\C$ has 2-out-of-3 for exact sequences, we see that $D_{n-2}\in\Z$.

Repeating this process, we obtain exact sequences in $\Z$
$$0\to Z_n\hookrightarrow Z_{n-1}\twoheadrightarrow D_{n-1}\to 0$$
$$0\to D_{n-1}\hookrightarrow Z_{n-2}\twoheadrightarrow D_{n-2}\to 0$$
$$\vdots$$
$$0\to D_2\hookrightarrow Z_1\twoheadrightarrow D_1\to 0$$
where $Z_i\in\Z\cap\P$ for every $i$.

For the final step, consider the diagram
\[\btk
0\rar & C_1\rar[hookrightarrow]\dar[hookrightarrow] & P_0\dar[hookrightarrow]\rar[twoheadrightarrow] & A\dar[equal]\rar & 0\\
0\rar & D_1\rar[hookrightarrow] & D_1+_{C_1} P_0\rar[twoheadrightarrow] & A\rar & 0
\etk\] If we let $Z_0\coloneqq D_1+_{C_1} P_0$, we can use the same reasonings as above to see that $Z_0\in\Z\cap\P$, and hence obtain our finite $\Z\cap\P$-resolution in $\Z$
$$0\to Z_n\to\dots\to Z_1\to Z_0\to A\to 0.$$
\end{proof}

\section{Examples}

\begin{ex}[Chain complexes]\label{chain_complexes}
We can use this presentation to recover the usual Waldhausen structure on bounded chain complexes over an exact category with enough injectives.

Let $\D$ be an exact category, which is a full subcategory of some abelian category $\A$.
Then $\Ch(\D)$ is a Waldhausen category by letting cofibrations be the chain maps that are degreewise admissible monomorphisms in $\D$, and weak equivalences be the quasi-isomorphisms (as seen in the ambient abelian category $\Ch(\A)$\footnote{With some work, this definition can be made independent of the ambient abelian category.}); see \cite[II, 9.2]{Wei13}.

To obtain this from a cotorsion pair, we let $\E=\C=\Ch(\D)$. The subcategory $\Z$ should be that of acyclic objects; in this case, we wish these to be precisely the exact chain complexes. Now, if we denote by $\inj$ the class of injective objects in $\Ch(\D)$, we have that $(\Ch(\D),\inj)$ is a cotorsion pair, which will be complete as long as $\D$ has enough injectives. Note that all injective objects in $\Ch(\D)$ must be exact, and that $\Z$ is  closed under extensions and cokernels of admissible monomorphisms.

Using Theorem \ref{the_thm} we get a Waldhausen category structure on $\Ch(\D)$, where cofibrations are admissible monomorphisms and weak equivalences are maps that factor as an acyclic cofibration followed by an acyclic fibration. It remains to show that these coincide with the quasi-isomorphisms. To see that this is so, recall the following well-known result, which is Exercise 1.3.5 in \cite{Wei94}.

\begin{lemma}\label{exact_ker_coker}
Let $f\colon A\to B$ be a map in $\Ch(\D)$. If $\ker f$ and $\cok f$ are both exact as elements in $\Ch(\A)$, then $f$ is a quasi-isomorphism.
\end{lemma}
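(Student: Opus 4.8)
\textbf{Proof plan for Lemma \ref{exact_ker_coker}.}

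The statement to prove is that if $f\colon A\to B$ is a chain map in $\Ch(\D)$ whose kernel and cokernel are both exact complexes in $\Ch(\A)$, then $f$ is a quasi-isomorphism. The plan is to factor $f$ through its image and analyze the two resulting short exact sequences of complexes in the ambient abelian category $\Ch(\A)$, using the long exact sequence in homology. First I would form, in $\Ch(\A)$, the short exact sequences
\[
0\to \ker f\to A\to \im f\to 0
\quad\text{and}\quad
0\to \im f\to B\to \cok f\to 0,
\]
where $\im f$ denotes the (degreewise) image of $f$ computed in $\Ch(\A)$. Each of these yields a long exact sequence in homology.

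From the first sequence, since $\ker f$ is exact (i.e.\ $H_n(\ker f)=0$ for all $n$), the connecting maps force $H_n(A)\xrightarrow{\sim} H_n(\im f)$ to be an isomorphism for every $n$; that is, the map $A\to\im f$ is a quasi-isomorphism. From the second sequence, since $\cok f$ is exact, we likewise get that $\im f\to B$ is a quasi-isomorphism. Composing, the map $f\colon A\to B$, which factors as $A\to\im f\to B$, induces an isomorphism on homology in every degree, so $f$ is a quasi-isomorphism. Since "quasi-isomorphism" for a map in $\Ch(\D)$ is defined by passing to the ambient abelian category $\Ch(\A)$, and all of the above computations take place there, this suffices.

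The only subtlety — and the point I would be most careful about — is that the image $\im f$ need not lie in $\D$; the factorization $A\to\im f\to B$ is genuinely a factorization in $\Ch(\A)$, not in $\Ch(\D)$, and this is perfectly fine since the notion of quasi-isomorphism is extrinsic. There is no real obstacle here beyond making sure the two short exact sequences of complexes in $\Ch(\A)$ are correctly set up and that the long exact homology sequence is invoked correctly (exactness of $\ker f$ and $\cok f$ forcing the relevant maps to be isomorphisms); the argument is a standard diagram chase in an abelian category of chain complexes.
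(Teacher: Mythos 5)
Your proof is correct, and it is the standard argument for this fact: the paper itself gives no proof, simply citing it as Exercise 1.3.5 of \cite{Wei94}, and your factorization of $f$ through its image in $\Ch(\A)$ together with the two long exact homology sequences is exactly the intended solution to that exercise. Your remark that $\im f$ need not lie in $\D$ is also the right point to flag, and it is harmless for precisely the reason you give: quasi-isomorphisms in $\Ch(\D)$ are defined via the ambient abelian category.
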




Now, suppose $f$ is a map that factors as $f=pi$, for some acyclic cofibration $i$ and acyclic fibration $p$. Then $i$ and $p$ are maps having exact kernel and cokernel, so by Lemma \ref{exact_ker_coker} they are both quasi-isomorphisms; hence, so is $f$.
Conversely, let $f\colon A\to B$ be a quasi-isomorphism. Due to Proposition \ref{factorization}, we can factor $f$ as $A\xhookrightarrow{i} C\xrightarrow{p} B$ where $i$ is a cofibration and $p$ an acyclic fibration. Using Lemma \ref{exact_ker_coker} once more, we see that $p$ is a quasi-isomorphism; then, since quasi-isomorphisms have 2-out-of-3, $i$ must also be a quasi-isomorphism. Since $i$ is a monomorphism, its cokernel must be exact and thus $i$ is an acyclic cofibration.
\end{ex}

\begin{ex}[Chain complexes with degreewise-split cofibrations]\label{ex_split}
For any exact category $\D$, the category of chain complexes $\Ch(\D)$ admits a different Waldhausen category structure, with quasi-isomorphisms as weak equivalences but with cofibrations the degreewise split admissible monomorphisms.

Even if $\D$ does not have enough injectives, we can obtain this structure from a cotorsion pair as well, by considering $\E=\C=\Ch(\D)$ as an exact category whose class of exact sequences is given by degreewise split exact sequences; denote this by $\Chdw(\D)$. In this case, one can show that $(\Chdw(\D),\contr)$ is a cotorsion pair, where $\contr$ denotes the subcategory of contractible complexes (i.e., of complexes $X$ such that $1_X$ is null-homotopic).

Furthermore, this cotorsion pair is complete, since for every complex $X$, its cone $C(X)$ is contractible and $X\hookrightarrow C(X)$ is degreewise split. Letting $\Z$ be the class of exact complexes, we see that $\contr\subseteq\Z$ if $\D$ is assumed to be idempotent complete \cite[Prop.\ 10.9]{Buh10}, and the same reasoning as in the previous example shows that the weak equivalences we obtain through Theorem \ref{the_thm} are precisely the quasi-isomorphisms.
\end{ex}

\begin{rmk}
A fundamental fact is that any exact category $\D$ can be considered a Waldhausen category, where the cofibrations are the admissible monomorphisms and the weak equivalences are the isomorphisms.

This example cannot be built from a cotorsion pair. To do so, we would need to set $\C=\D$, and $\Z$ to be the full subcategory of objects isomorphic to $0$. However, it is seldom the case that all $\D$-injective objects are isomorphic to zero; thus, in general $\D^\bot\not\subseteq\Z$ regardless of the ambient exact category $\E$.

This is somewhat disappointing but not entirely insurmountable: even though it is not possible to define the Waldhausen category $\D$ through a cotorsion pair, the Gillet-Waldhausen theorem \cite[V, Thm.\ 2.2]{Wei13} shows that, whenever $\D$ is closed under kernels of epimorphisms, $K(\D)\simeq K(\Ch(\D))$. Hence, if $\D$ is closed under kernels of epimorphisms and has enough injectives, the $K$-theory spectrum of $\D$ is always equivalent to that of $\Ch(\D)$, which can be defined from a cotorsion pair, as shown in Example \ref{chain_complexes}.

If $\D$ does not have enough injectives or is not closed under kernels of epimorphisms, this can still be done, by recalling that $$K(\D)\simeq K(\hat{\D})\simeq K(\Ch(\hat{\D}))\simeq K(\Chdw(\hat{\D})),$$ where $K(\Chdw(\hat{\D}))$ is obtained through a cotorsion pair as in Example \ref{ex_split}. Here $\hat{\D}$ denotes the full exact subcategory of the idempotent completion of $\D$ consisting of the objects $A$ such that $[A]\in K_0(\D)$. The first equivalence is given by \cite[IV, Ex.\ 8.13]{Wei13}, the second one is due to the Gillet-Waldhausen theorem since $\hat{\D}$ is always closed under kernels of epimorphisms (see \cite[IV, Ex.\ 8.13]{Wei13}), and the third equivalence is given by \cite[V, Ex.\ 2.3]{Wei13}.
\end{rmk}

\begin{ex}
It is possible to use Hovey's result \cite[Thm.\ 2.5]{Hov07} to obtain a model category, and later restrict to a Waldhausen category on its small cofibrant objects; however, not every Waldhausen category determined from a cotorsion pair comes from an abelian model category in this manner.

First of all, note that we do not need the subcategory $\Z$ to have 2-out-of-3 in order to construct a Waldhausen category, while this will always be the case for the class of acyclic objects in a model category; thus, this gives a simple way to recognize examples that do not come from restricting the structure present in a model category. 
However, it is also possible to construct examples where $\Z$ has 2-out-of-3 and the Waldhausen category cannot be promoted to a model category via Hovey's result.

To see this, let $\A$ be an abelian category with enough injectives, and consider the Waldhausen category obtained from the cotorsion pair $(\A,\inj)$, where acyclic objects are those of finite injective dimension. 
For this to come from an abelian model category, there should exist a category of fibrant objects $\F$ such that $\inj=\F\cap\Z$ and that $(\Z,\F)$ is a complete cotorsion pair. But the left class in any cotorsion pair must contain the class of projective objects, and therefore whenever $\A$ has a projective object of infinite injective dimension, $(\Z,\F)$ cannot be a cotorsion pair regardless of the category $\F$.

For a concrete instance of this, let $\Bbbk$ be a field and consider the $\Bbbk$-algebra $A_n=\Bbbk Q_n/I_n$, where $Q_n$ is the quiver
\[\btk
0\ar[out=210, in=150,loop,"\alpha_0"]\rar["\alpha_1"] & 1\rar["\alpha_2"] & 2\rar["\alpha_2"] & \cdots\rar["\alpha_{n-1}"] & n-1\rar["\alpha_n"] & n
\etk\] and $I_n=\langle \alpha_0^2, \alpha_1\alpha_0, \alpha_2\alpha_1, \dots, \alpha_n\alpha_{n-1}\rangle$. For any $n\geq 1$, the projective (and simple) left $A$-module
\[\btk
P_n: 0\rar & 0\rar & 0\rar & \cdots\rar & 0\rar & \Bbbk
\etk\] has infinite injective dimension.
\end{ex}

\begin{ex}\label{Frobenius}
Let $R$ be a quasi-Frobenius ring, that is, a ring such that the classes of projective and injective $R$-modules agree, and denote by $\Rmod$ the category of finitely generated right $R$-modules. We can consider the cotorsion pair $(\Rmod, \Rinj)=(\Rmod, \Rproj)$, where $\Rinj=\Rproj$ is the full subcategory of finitely generated injective-projective $R$-modules.

Every $R$-module in a quasi-Frobenius ring can be embedded in a free $R$-module; 
thus, in particular, every finitely generated $R$-module can be embedded in a finitely generated projective $R$-module. This shows the cotorsion pair $(\Rmod, \Rproj)$ is complete. It is hereditary since quasi-Frobenius rings are Noetherian.

Also, the class $\Z=\Rproj=\Rinj$ has 2-out-of-3 for exact sequences, since both classes are always closed under extensions, the category of projective modules over any ring is always closed under kernels of admissible epimorphisms, and the category of injective modules is closed under cokernels of admissible monomorphisms.

This implies we can apply Theorem \ref{the_thm} to get a Waldhausen structure on $\Rmod$ with admissible monomorphisms as cofibrations, and weak equivalences the maps that factor as an (admissible) monomorphism with projective cokernel followed by an (admissible) epimorphism with projective kernel.
Furthermore, Theorem \ref{localization} yields a homotopy fibration sequence
$$K(R)\to K(\Rmod)\to K(\Rmod, w_{\Rproj})$$ where the rightmost term considers the Waldhausen category structure described above, and the other two terms compute Quillen's $K$-theory of the exact categories with isomorphisms. Thus, in a way, $K(\Rmod, w_{\Rproj})$ measures the difference between $K(R)$ and $G(R)$.

Examples of quasi-Frobenius rings are $\mathbb{Z}/n\mathbb{Z}, \Bbbk [G]$ for $\Bbbk$ a field and $G$ a finite group, and any finite dimensional Hopf algebra.
\end{ex}




\begin{ex}\label{artin_gorenstein}
Let $R$ be an Artin algebra that is also a Gorenstein ring (this is, $R$ has finite injective dimension as a left and right module over itself). The full subcategory $\CM\subseteq\Rmod$ of maximal Cohen-Macaulay modules consists of those $M\in\Rmod$ such that there exists an exact sequence $$0\to M\to P^0\xrightarrow{d^0} P^1\xrightarrow{d^1} P^2\to\dots$$ with each $P^n\in\Rproj$ and $\ker d^n\in {}^\bot\Rproj$, i.e., $\Ext^1_{\Rmod}(\ker d^n,P)=0$ for every $P\in\Rproj$. When $R$ is commutative, this is the usual class of maximal Cohen-Macaulay modules, given by the finitely generated $R$-modules $M$ such that $\depth (M)=\dim (M)$.

Denote by $\Rproj^{<\infty}$ the class of finitely generated $R$-modules of finite projective dimension. Then $$(\CM, \Rproj^{<\infty})$$ is a complete hereditary cotorsion pair with respect to $\Ext^1_{\Rmod}$, and if we restrict to $\Ext^1_{\CM}$ we get the complete hereditary cotorsion pair $$(\CM,\Rproj^{<\infty} \cap\CM)=(\CM, \Rproj)$$ in $\CM$ \cite[VI, \S 3]{BR07}.

Note that in this case, the subcategory $\Rproj$ has 2-out-of-3 for exact sequences; thus, letting $\B=\CM$ and $\A=\Rproj$, Theorem \ref{localization} yields a homotopy fibration sequence $$K(R)\to K(\CM)\to K(\CM, w_{\Rproj})$$

Moreover, it is also known that for this class of rings, every finitely generated $R$-module admits a finite resolution by maximal Cohen-Macaulay modules \cite[VI, \S 2]{BR07}; thus Quillen's Resolution Theorem gives $$K(\CM)\simeq K(\Rmod).$$ We conclude that, for this class of rings, $K(\CM,w_{\Rproj})$ measures the difference between $K(R)$ and $G(R)$.

\end{ex}


\section{Appendix: Cotorsion pairs in the category of spans}\label{appendix}

The aim of this technical appendix is to show that complete cotorsion pairs in an exact category $\C$ induce complete cotorsion pairs in the category $\Span(\C)$ of spans in $\C$. More specifically, we will prove the following result.


\begin{thm}\label{spans}
Let $(\P,\I)$ be a complete cotorsion pair in an exact category $\C$. If we define two classes of objects in $\Span(\C)$ by \[\P_\Sp =\{C\leftarrow A\xhookrightarrow{i} B : A,B,C\in\P \text{ and } i \text{ is a cofibration}\}\] and 
\[\btk[column sep=small] \I_\Sp =\{C & A\lar[twoheadrightarrow,"\sim","p"']\rar & B : A,B,C\in\I \text{ and } p \text{ is an acyclic fibration}\}\etk,\] then $(\P_\Sp,\I_\Sp)$ is a complete cotorsion pair in $\Span(\C)$.
\end{thm}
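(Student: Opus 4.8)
I would verify the two defining conditions of a complete cotorsion pair directly: (A) $\Ext^1_{\Span(\C)}(P,I)=0$ for all $P\in\P_\Sp$ and $I\in\I_\Sp$, and (B) every span $X$ admits special resolutions $0\to X\to\mathbb{I}\to\mathbb{P}\to 0$ and $0\to\mathbb{I}'\to\mathbb{P}'\to X\to 0$ with $\mathbb{I},\mathbb{I}'\in\I_\Sp$ and $\mathbb{P},\mathbb{P}'\in\P_\Sp$. Granting (A) and (B), the equalities $\P_\Sp={}^\bot\I_\Sp$ and $\I_\Sp=\P_\Sp^\bot$ follow from the usual Salce-type argument: both classes are closed under direct summands and under extensions (inherited from $\P$ and $\I$ together with the standard exact-category fact that a morphism of short exact sequences whose outer verticals are admissible monomorphisms, resp. epimorphisms, has middle vertical an admissible monomorphism, resp. epimorphism, with cokernel, resp. kernel, an extension of the outer ones), so if $Z\in{}^\bot\I_\Sp$ then splitting the precover from (B) exhibits $Z$ as a summand of a $\P_\Sp$-object, and dually for $\P_\Sp^\bot$. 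Throughout I use that in $\Span(\C)$, with its objectwise exact structure, the $\Ext^1$-groups and the three evaluation functors are computed objectwise, and that $\ell_a(\P),\ell_b(\P),\ell_c(\P)\subseteq\P_\Sp$ and $r_a(\I),r_b(\I),r_c(\I)\subseteq\I_\Sp$, where $\ell_\bullet,r_\bullet$ are the Kan-extension adjoints of the evaluations.

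\textbf{Orthogonality (A).} Given a short exact sequence $0\to I\to E\to P\to 0$ in $\Span(\C)$, each of its three components is a short exact sequence in $\C$ between a $\P$-object and an $\I$-object, hence splits. A choice of objectwise splittings identifies the two structure maps of $E$ with upper-triangular $2\times 2$ matrices whose diagonal entries are the structure maps of $I$ and of $P$ and whose lower-left entries vanish (because $I$ is a subspan and $P$ a quotient span); the extension class is then recorded by the two off-diagonal entries $u\colon P_a\to I_c$ and $v\colon P_a\to I_b$, and splitting $E$ as spans amounts to solving two linear equations for the corrections to the objectwise splittings. Since the left leg $I_a\twoheadrightarrow I_c$ of $I$ is an acyclic fibration, $\Ext^1_\C(P_a,\ker)=0$ makes postcomposition with it surject onto $\Hom_\C(P_a,I_c)$, so the $a$-component correction can be chosen to absorb $u$; and since the right leg $P_a\cof P_b$ of $P$ is a cofibration, $\Ext^1_\C(\cok,I_b)=0$ makes precomposition with it surject onto $\Hom_\C(P_a,I_b)$, so the $b$-component correction can then be chosen to absorb $v$. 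Hence the obstruction vanishes and $\Ext^1_{\Span(\C)}(P,I)=0$.

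\textbf{The resolutions (B).} I would build both resolutions one component at a time, reducing every ``leg'' step to a factorization of a morphism between objects of a single class. For the special $\P_\Sp$-precover: take a special $\P$-precover $\mathbb{P}'_c\twoheadrightarrow X_c$; then take a special $\P$-precover $\mathbb{P}'_a$ of the pullback $\mathbb{P}'_c\times_{X_c}X_a$, so that $\mathbb{P}'_a\twoheadrightarrow X_a$ has kernel sitting in an extension of $\ker(\mathbb{P}'_c\to X_c)\in\I$ by an $\I$-object, whence the induced left leg of the kernel span is an admissible epimorphism with kernel in $\I$, i.e.\ an acyclic fibration; finally lift $\mathbb{P}'_a\to X_a\xrightarrow{h}X_b$ through a special $\P$-precover of $X_b$ and factor the resulting map between $\P$-objects as a cofibration followed by an acyclic fibration by Proposition~\ref{factorization}, with $\mathbb{P}'_b$ the middle term (using Lemma~\ref{compositions} for the composite acyclic fibration onto $X_b$). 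Dually, for the special $\I_\Sp$-preenvelope: take a special $\I$-preenvelope $X_c\cof\mathbb{I}_c$; extend $X_a\xrightarrow{f}X_c\cof\mathbb{I}_c$ over a special $\I$-preenvelope $X_a\cof I^0_a$ to a map $I^0_a\to\mathbb{I}_c$ between $\I$-objects, and factor that map by Proposition~\ref{factorization} applied in $\C^{\op}$ (where $(\I,\P)$ is the complete cotorsion pair) as an admissible monomorphism with $\P$-cokernel into an $\I$-object $\mathbb{I}_a$, followed by an acyclic fibration $\mathbb{I}_a\twoheadrightarrow\mathbb{I}_c$; finally take $\mathbb{I}_b$ to be a special $\I$-preenvelope of the pushout $\mathbb{I}_a+_{X_a}X_b$, which makes the right leg of the cokernel span $\mathbb{P}=\mathbb{I}/X$ the map $\mathbb{P}_a\cof\mathbb{P}_b$ appearing in the exact sequence $0\to\mathbb{P}_a\to\mathbb{P}_b\to(\text{a }\P\text{-object})\to 0$, hence a cofibration. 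In every step the relevant $\Ext^1$ between $\P$ and $\I$ vanishes by hypothesis, so the needed lifts and extensions exist, and one checks that the three components assemble into morphisms of spans landing in $\I_\Sp$ and $\P_\Sp$.

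\textbf{Where the difficulty lies.} The main obstacle is step (B). A naive ``apply a special resolution objectwise'' construction fails because the objectwise resolutions do not assemble into maps of spans, and correcting them by adding direct summands pollutes the wrong slots — for instance, it forces an $\I$-object into a cokernel that is required to lie in $\P$ — which destroys the defining leg-conditions of $\P_\Sp$ and $\I_\Sp$. The two devices above are what make the bookkeeping close up: routing each leg through a precover or preenvelope of a single component and then invoking the \emph{one-sided} factorization of Proposition~\ref{factorization} (and its $\C^{\op}$-form) to promote that leg to a cofibration or acyclic fibration, together with a pullback, resp.\ pushout, to absorb the component carrying the acyclic-fibration, resp.\ cofibration, constraint without contaminating the other slots. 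Getting the order in which the three components are treated correct is essential, and this is the part I expect to require the most care.
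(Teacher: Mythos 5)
Your proposal is essentially correct, and in two places it takes a genuinely different route from the paper. For orthogonality, your matrix/correction argument is the same computation the paper performs, only packaged differently: the paper splits the three component sequences one at a time and uses the lifting property of Proposition \ref{liftings} (applied to $(\P,\I)$) to make each new splitting compatible with the one already chosen, which is exactly your ``absorb $u$ with the $a$-correction, then absorb $v$ with the $b$-correction'' in disguise. For completeness your construction is genuinely different and arguably cleaner in places: the paper also builds the precover leg by leg, but where you pull back $\mathbb{P}'_c\twoheadrightarrow X_c$ along $X_a\to X_c$ before precovering, the paper first produces an arbitrary comparison map $g_2\colon I_A\to I_C$ between the kernels and then repairs it by adding a $\P\cap\I$-object (a special $\P$-precover $P_{I_C}$ of $I_C$) as a direct summand to both $P_A$ and $I_A$, after which the corrected kernel map is an admissible epimorphism whose kernel is a pullback lying in $\I$ (this is where the paper's Lemma \ref{pullback} enters). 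Your treatment of the $b$-slot --- lift through a precover of $X_b$ and then apply the one-sided factorization of Proposition \ref{factorization} --- coincides with the paper's. Both constructions work; yours avoids the direct-summand bookkeeping, while the paper's avoids the exactness check for kernels of composites of admissible epimorphisms.

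The one real soft spot is your Salce-type argument for the reverse inclusions ${}^\bot\I_\Sp\subseteq\P_\Sp$ and $\P_\Sp^\bot\subseteq\I_\Sp$. You assert that $\P_\Sp$ and $\I_\Sp$ are closed under direct summands, but the leg conditions are not obviously retract-stable: in a general exact category (not assumed weakly idempotent complete) a retract of an admissible monomorphism need not be an admissible monomorphism, so knowing that $Z$ is a summand of some $\mathbb{P}\in\P_\Sp$ does not immediately give that the right leg of $Z$ is a cofibration. This can be patched --- the right leg of $Z$ is a retract, in the arrow category, of the right leg of $\mathbb{P}$, hence inherits the left lifting property against acyclic fibrations, and Proposition \ref{liftings} then identifies it as an admissible monomorphism with cokernel in $\P$ --- but that step needs to be said. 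The paper sidesteps the issue entirely by testing $X\in{}^\bot\I_\Sp$ directly against hand-built elements of $\I_\Sp$: extensions concentrated in one slot show $X_a,X_b,X_c\in\P$, and then lifting squares against acyclic fibrations (reduced to the case of an identity bottom row by a pullback) show that the right leg is a cofibration.
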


Recall that, for any category $\D$, the category of spans over $\D$ is defined as the category of functors \[\Span(\D)\coloneqq [\bullet\leftarrow \bullet\to\bullet,\D].\] Thus $\Span(\D)$ has as objects all diagrams in $\D$ of shape $\bullet\leftarrow \bullet\to\bullet$, and natural transformations between them as morphisms.

If $\A$ is an abelian category, then $\Span(\A)$ is also abelian (as is any category of functors from a small category into $\A$), and thus if $\C\subseteq\A$ is an exact category embedded in $\A$, we see that $\Span(\C)$ is an exact category embedded in $\Span(\A)$.

We prove theorem \ref{spans} in two stages. For ease of notation, we denote the bifunctor $\Ext^1_{\Span(\C)}$ simply by $\Ext^1$.

\begin{thm}
If $(\P,\I)$ is a cotorsion pair in $\C$, then $(\P_\Sp,\I_\Sp)$ as defined above is a cotorsion pair in $\Span(\C)$.
\end{thm}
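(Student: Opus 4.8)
The plan is to verify the two orthogonality conditions defining a cotorsion pair, namely that $\P_\Sp^\bot = \I_\Sp$ and ${}^\bot\I_\Sp = \P_\Sp$, by reducing $\Ext^1$-computations in $\Span(\C)$ to the three component objects of a span, where we may invoke the hypothesis that $(\P,\I)$ is a cotorsion pair in $\C$. The key computational input is a description of $\Ext^1_{\Span(\C)}(X,Y)$ for spans $X = (X_0 \leftarrow X_1 \to X_2)$ and $Y = (Y_0 \leftarrow Y_1 \to Y_2)$ in terms of the $\Ext$-groups of the components. Since an extension of spans is degreewise an extension (exactness in $\Span(\A)$ is checked at each of the three vertices), an extension $0 \to Y \to Z \to X \to 0$ gives three extensions $0 \to Y_k \to Z_k \to X_k \to 0$ for $k = 0,1,2$, compatible with the span maps; conversely such compatible data assemble into an extension of spans. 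The structure maps $X_1 \to X_0$ and $X_1 \to X_2$ induce, via pullback/pushforward of extension classes, a Mayer–Vietoris-type description, so that $\Ext^1_{\Span(\C)}(X,Y)$ fits into an exact sequence built from $\Ext^1_\C(X_k, Y_k)$ and $\Hom_\C$-terms involving the structure maps. I would make this precise as the first lemma.

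Granting such a description, the inclusion $\I_\Sp \subseteq \P_\Sp^\bot$ is the more delicate direction and I expect it to be the main obstacle. Take $P = (P_0 \leftarrow P_1 \xhookrightarrow{i} P_2) \in \P_\Sp$ with all $P_k \in \P$ and $i$ a cofibration (admissible mono with cokernel in $\C$), and $I = (I_0 \xleftarrow{p} I_1 \to I_2) \in \I_\Sp$ with all $I_k \in \I$ and $p$ an acyclic fibration (admissible epi with kernel in $\C^\bot = \I$). Given an extension $0 \to I \to Z \to P \to 0$, the componentwise extensions split since $\Ext^1_\C(P_k, I_k) = 0$ for each $k$ (using $P_k \in \P$, $I_k \in \I$); the remaining work is to show these splittings can be chosen compatibly with the span structure maps. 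This is where the specific shape of the objects in $\P_\Sp$ and $\I_\Sp$ — a cofibration on the $\P$ side, an acyclic fibration on the $\I$ side — should be exploited: the lifting property from Proposition \ref{liftings} (cofibrations lift against acyclic fibrations), applied in $\C$, lets one adjust a choice of splitting at $P_1$ to one that is simultaneously compatible with $i\colon P_1 \cof P_2$ and with $p\colon I_1 \twoheadrightarrow I_0$. Concretely, after splitting the middle and right components one gets a map controlling the discrepancy over $P_1$, and the lifting property kills it; dually for the left leg. So $\Ext^1_{\Span(\C)}(P, I) = 0$.

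For the reverse inclusions I would argue as follows. To show $\P_\Sp^\bot \subseteq \I_\Sp$: given $Y = (Y_0 \leftarrow Y_1 \to Y_2)$ orthogonal to all of $\P_\Sp$, first test against the ``constant-leg'' spans of the form $(P \leftarrow 0 \to 0)$, $(0 \leftarrow 0 \to P)$, and $(P \xleftarrow{=} P \xhookrightarrow{=} P)$ for $P \in \P$ — each lies in $\P_\Sp$ — to deduce $Y_0, Y_2, Y_1 \in \P^\bot = \I$; then testing against a span of the form $(0 \leftarrow P' \cof P)$ with $P', P \in \P$ forces the structure map $Y_1 \to Y_2$ (or $Y_1 \to Y_0$) to have the surjectivity-with-kernel-in-$\I$ property, i.e.\ to be an acyclic fibration, via the Ext-description. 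This shows $Y \in \I_\Sp$. The argument for $ {}^\bot\I_\Sp \subseteq \P_\Sp$ is dual: an object $X$ left-orthogonal to all $I \in \I_\Sp$ has all components in ${}^\bot\I = \P$ (test against constant spans valued in $\I$), and testing against spans $(I \xleftarrow{\sim} I' \to 0)$ with $I', I \in \I$ and $I' \twoheadrightarrow I$ an acyclic fibration forces the structure map $X_1 \to X_0$ (resp.\ $X_1 \to X_2$) to be an admissible monomorphism with cokernel in $\C$, i.e.\ a cofibration; hence $X \in \P_\Sp$. Throughout, the main subtlety is bookkeeping the compatibility of extension classes with the two structure maps of a span, which is exactly what the Ext-description from the first lemma is designed to handle, and the lifting property of Proposition \ref{liftings} is what makes the compatible-splitting step go through.
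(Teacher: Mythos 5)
Your treatment of the forward direction, $\Ext^1(P,I)=0$ for $P\in\P_\Sp$ and $I\in\I_\Sp$, is essentially the paper's argument: componentwise splittings exist because $\Ext^1_\C(P_k,I_k)=0$, and compatibility of the splittings with the structure maps is enforced by Proposition \ref{liftings}, exploiting the acyclic fibration on the left leg of $I$ and the cofibration on the right leg of $P$. (The paper organizes this as: split at the $C$-vertex, lift against $g'\colon I_A\twoheadrightarrow I_C$ to obtain a compatible splitting at the $A$-vertex, then lift $f$ against the acyclic fibration $\beta_1$ to split at the $B$-vertex; your sketch is the same idea in a slightly different order.) Testing against spans such as $(0\leftarrow J\to 0)$ and constant spans to show that the components of an orthogonal object lie in the correct class is also what the paper does.

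There is, however, a genuine gap, and it is twofold. First, you premise everything on a ``Mayer--Vietoris-type description'' of $\Ext^1_{\Span(\C)}$ that you never state or prove; in an exact category (no enough projectives, Yoneda $\Ext^1$) this is not a routine lemma, and the paper never needs it --- it works directly with extensions of spans and their splittings. Second, and more seriously, the one place you essentially invoke this description --- to conclude that the structure map of an object of $\P_\Sp^\bot$ (resp.\ ${}^\bot\I_\Sp$) is an acyclic fibration (resp.\ a cofibration) --- is exactly where the real work lies, and ``via the Ext-description'' is not an argument: vanishing of $\Ext^1$ against your test objects $(0\leftarrow P'\cof P)$ does not visibly produce an admissible epimorphism $Y_1\twoheadrightarrow Y_0$ with kernel in $\I$. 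The paper's mechanism is concrete and different in this step: by Proposition \ref{liftings} it suffices to verify a lifting property for the structure map, and each lifting square (first one with identity along the bottom, then the general case via a pullback along the acyclic fibration) is encoded as a splitting problem for an explicitly constructed short exact sequence of spans whose kernel lies in $\I_\Sp$, for instance $0\to(0\leftarrow 0\to\ker p)\to(C\leftarrow A\to A')\to X\to 0$. Without this reduction to liftings and the explicit extensions realizing them, your converse inclusions do not close up.
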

\begin{proof}
Fix spans \[\btk[column sep=small] P: P_C & P_A\lar["g"']\rar[hookrightarrow,"f"] & P_B  \in\P_\Sp\etk\] and \[\btk[column sep=small] I: I_C &  I_A\lar["g'"', twoheadrightarrow,"\sim"]\rar["f'"] & I_B  \in\I_\Sp;\etk\] we  show that $\Ext^1(P,I)=0$. Recall that elements in $\Ext^1(P,I)$ correspond to isomorphism classes of extensions of $P$ by $I$ in $\Span(\C)$; thus, proving $\Ext^1(P,I)=0$ is equivalent to showing that every extension of $P$ by $I$ in $\Span(\C)$ is split, and therefore isomorphic to the trivial extension
$0\to I\to P+I\to P\to 0$.

Consider an extension of $P$ by $I$ in ${\Span(\C)}$ as displayed below
\begin{equation}\label{seq_in_span}
\btk
0\rar & I_B\rar["\beta_2"] & B\rar["\beta_1"] & P_B\rar & 0\\
0\rar & I_A\dar["g'",twoheadrightarrow,"\sim"']\rar["\alpha_2"]\uar["f'"'] & A\dar["g''"]\uar["f''"']\rar["\alpha_1"] & P_A\dar["g"]\uar["f"', hookrightarrow]\rar & 0\\
0\rar & I_C\rar["\gamma_2"] & C\rar["\gamma_1"] & P_C\rar & 0
\etk
\end{equation} Since $\Ext^1_\C(P_C,I_C)=0$, we know the bottom sequence splits, and thus there exists a map $s\colon C\to I_C$ such that $s\gamma_2=1_{I_C}$.\footnote{In fact, each of the horizontal sequences splits in $\C$, for this same reason. However, there is no guarantee that the given splittings will assemble into a map in $\Span(\C)$; that is, we don't know the resulting diagram will commute.} Consider the commutative square
\[\btk
I_A\dar["\alpha_2"']\rar[equal] & I_A\dar["g'",twoheadrightarrow,"\sim"']\\
A\rar["sg''"'] & I_C
\etk\] We know $g'$ is an acyclic fibration, and $\alpha_2$ is an admissible monomorphism with cokernel in $\P$, so by Proposition \ref{liftings} there exists a lift in the square above, which we denote by $t\colon A\to I_A$.

We immediately see that $t$ defines a splitting of the short exact sequence in the middle. Moreover, if we consider the bottom half of our diagram (\ref{seq_in_span}) as a short exact sequence in the category of arrows $\Ar(\C)$, the condition $g't=sg''$ implies that $(t,s)$ defines a splitting of this sequence in $\Ar(\C)$. Therefore, there exist maps $t'\colon P_A\to A$ and $s'\colon P_C\to C$ such that $(t',s')$ also defines a splitting of that sequence in $\Ar(\C)$.

Finally, considering the diagram
\[\btk
P_A\rar["f''t'"]\dar["f"', hookrightarrow] & B\dar["\beta_1", twoheadrightarrow, "\sim"']\\
P_B\rar[equal] & P_B
\etk\] and its lift $t''\colon P_B\to B$, we see that $t''$ yields a splitting of the top exact sequence in such a way that $(t'',t',t)$ is a map in $\Span(\C)$, as desired.

Now, let $X: C\xleftarrow{g} A\xrightarrow{f} B$ be an element in $\Span(C)$ such that $\Ext^1(X,I)=0$ for every $I\in\I_\Sp$. We must show that $X\in\P_\Sp$; that is, that $A,B,C\in\P$ and $f$ is a cofibration.

To show that $A\in\P$, it suffices to prove that $\Ext^1_\C(A, J)=0$ for any $J\in\I$. Let
\[\btk[column sep=small] 0\rar & J\rar["\alpha_2"] & D\rar["\alpha_1"] & A\rar & 0\etk\] be an extension of $A$ by $J$ in $\C$. We can fit it into the following commutative diagram with exact rows
\[\btk
0\rar & 0\rar & B\rar[equal] & B\rar & 0\\
0\rar & J\dar\rar["\alpha_2"]\uar & D\dar["g\alpha_1"]\uar["f\alpha_1"']\rar["\alpha_1"] & A\dar["g"]\uar["f"']\rar & 0\\
0\rar & 0\rar & C\rar[equal] & C\rar & 0
\etk\] where the column on the left is an element of $\I_\Sp$; then, by assumption, this exact sequence in $\Span(\C)$ splits, and in particular, the middle sequence splits in $\C$. Hence every extension of $A$ by $J$ is split, for any $J\in\I$; this shows that $A\in\P$. Similarly, one shows that $B$ and $C$ belong to $\P$.

It remains to prove that $f$ is a cofibration.
By Proposition \ref{liftings}, it is enough to show that $f$ has the right lifting property with respect to all acyclic fibrations. As a first step towards this, we restrict ourselves to a smaller class of commutative squares and show that any diagram
\[\btk
A\dar["f"']\rar["\alpha"] & A'\dar[twoheadrightarrow,"\sim"', "p"]\\
B\rar[equal] & B
\etk\] admits a lift.

To see this, note that we can fit the data of the above square into the following short exact sequence in $\Span(\C)$
\[\btk
0\rar & \ker p\rar & A'\rar[twoheadrightarrow,"p"] & B\rar & 0\\
0\rar & 0\uar\dar\rar & A\rar[equal]\uar["\alpha"']\dar["g"] & A\uar["f"']\dar["g"]\rar & 0\\
0\rar & 0\rar & C\rar[equal] & C\rar & 0
\etk\] But $p$ is an acyclic fibration, so $\ker p\in\I$ and hence the left column is an element of $\I_\Sp$; 
thus, by assumption, this sequence splits. In particular, there exists a map $s\colon B\to A'$ such that $ps=1_B$ and $\alpha=sf$, providing the desired lift.

If instead we start with a general square
\[\btk
A\dar["f"']\rar["\alpha"] & A'\dar[twoheadrightarrow,"\sim"', "p"]\\
B\rar["\beta"'] & B'
\etk\] we construct the pullback displayed below left,
 \[\btk
B\times_{B'} A'\dar["\pi_B"']\rar["\pi_{A'}"] & A'\dar[twoheadrightarrow,"\sim"', "p"]\\
B\rar["\beta"'] & B'
\etk \hspace{2cm}
\btk
A\dar["f"']\rar["\varphi"] & B\times_{B'} A'\dar["\pi_B"]\\
B\rar[equal] & B
\etk\] and then consider the square shown above right, where $\varphi$ is the map induced by the universal property of the pullback. Note that $\pi_B$ is an acyclic fibration, since pullbacks preserve admissible epimorphisms and kernels of surjections. Then, if we let $s\colon B\to B\times_{B'} A'$ denote a lift for the square above right, we see that $\pi_{A'}s$ defines a lift for our original square. 

Dually, one shows that $\P_\Sp^\bot=\I_\Sp$.
\end{proof}

\begin{thm}
If the cotorsion pair $(\P,\I)$ is complete, then so is the cotorsion pair $(\P_\Sp,\I_\Sp)$.
\end{thm}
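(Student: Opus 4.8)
The plan is as follows. By the previous theorem we already know that $(\P_\Sp,\I_\Sp)$ is a cotorsion pair in $\Span(\C)$, so only \emph{completeness} remains: given any span $X=(C\xleftarrow{g}A\xrightarrow{f}B)$ in $\Span(\C)$, we must fit it into short exact sequences $0\to X\to I\to P\to 0$ and $0\to I'\to P'\to X\to 0$ with $I,I'\in\I_\Sp$ and $P,P'\in\P_\Sp$ (here $I,P,I',P'$ are spans). Since $\Span(\C)\hookrightarrow\Span(\A)$ is exact and a short exact sequence of spans is exactly a triple of compatible short exact sequences in $\C$ — one at the apex $A$, one at $B$, one at $C$ — I would build each of the two sequences from three resolutions furnished by completeness of $(\P,\I)$ in $\C$, and then promote the structure maps so that the resolving spans lie in $\I_\Sp$, respectively $\P_\Sp$.

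For $0\to X\to I\to P\to 0$ I would start by resolving the apex, $0\to A\xrightarrow{u}I_A\to P_A\to 0$ with $I_A\in\I$, $P_A\in\P$. For the leg over $f$ this is easy: form the pushout $I_A+_AB$ of $u$ along $f$, so that $B\to I_A+_AB$ is an admissible monomorphism with cokernel $P_A\in\P$ and $I_A+_AB\in\C$ (an extension of $P_A$ by $B$); resolving $I_A+_AB\hookrightarrow I_B$ with $I_B\in\I$ and cokernel in $\P$ yields $0\to B\to I_B\to P_B\to 0$ with $I_B\in\I$ and $P_B$ an extension of two objects of $\P$, hence in $\P$, and the induced map $I_A\to I_B$ restricts on cokernels to an admissible monomorphism $P_A\to P_B$ with cokernel in $\P$, i.e.\ the cofibration required for $P\in\P_\Sp$. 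The leg over $g$ is harder, because there the constraint falls on $I$, not $P$: one needs $I_C\in\I$ with $C\hookrightarrow I_C$ (cokernel in $\P$) \emph{and} an admissible epimorphism $I_A\twoheadrightarrow I_C$ with kernel in $\I$, compatible with $g$. A pushout is useless here (it does not produce an epimorphism out of $I_A$); instead I would resolve $C$, use the lifting property of Proposition \ref{liftings} — every object of $\I$ is the source of an acyclic fibration to $0$, so a map from $I_A$ into a resolution of $C$ extending $u_Cg$ exists — and then correct it to a bona fide acyclic fibration using Proposition \ref{factorization}, Remark \ref{epis_in_C} and the pullback Lemma \ref{pullback}, checking at each step that kernels stay in $\I$. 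The sequence $0\to I'\to P'\to X\to 0$ I would treat by the parallel construction, with $\P$-covers, pullbacks and Lemma \ref{pullback} replacing $\I$-preenvelopes and pushouts.

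The hard part, and the reason the construction cannot simply be run coordinatewise, is to reconcile both legs through a single, tightly constrained apex resolution while keeping the cokernel span of $0\to X\to I$ inside $\P_\Sp$. The naive ``cofree'' candidate $(I_C\xleftarrow{\ \mathrm{pr}\ }I_A\oplus I_B\oplus I_C\xrightarrow{\ \mathrm{pr}\ }I_B)$ built from coordinatewise resolutions $A\hookrightarrow I_A$, $B\hookrightarrow I_B$, $C\hookrightarrow I_C$ does lie in $\I_\Sp$ (its left leg is a split epimorphism with kernel $I_A\oplus I_B\in\I$) and does receive an admissible monomorphism from $X$, but it over-resolves the apex: after an evident shearing, the cokernel at the apex is $P_A\oplus I_B\oplus I_C$, which is \emph{not} in $\P$, since $\P$ is not closed under extensions by objects of $\I$ (and dually $\I$ is not closed under extensions by objects of $\P$, which is what spoils the corresponding candidate for the other sequence). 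So the construction must be carried out so that the apex of $I$ contributes only a $\P$-object to the cokernel, and it is this simultaneous control of the apex together with both legs — rather than any single step — that is the technical core of the argument.
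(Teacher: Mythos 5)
Your overall architecture is sound and is essentially the paper's, run in the dual direction: the paper constructs $0\to I\to P\to X\to 0$ in detail by first choosing resolutions of the legs and then repairing the structure maps, and obtains $0\to X\to I'\to P'\to 0$ by duality, whereas you detail the latter. Your pushout treatment of the $f$-leg is correct and complete, and your diagnosis of why the coordinatewise ``cofree'' candidate fails (the cokernel at the apex acquires $\I$-summands, and $\P$ is not closed under such extensions) is exactly right. The gap is at the step you yourself flag as the technical core: you never actually say how to ``correct'' the lifted map $w\colon I_A\to I_C$ into an acyclic fibration, and the tools you cite do not accomplish it. Factoring $w$ via Proposition \ref{factorization} produces an acyclic fibration out of a \emph{new} object $E$ with $E/I_A\in\P$; such an $E$ is an extension of a $\P$-object by $I_A\in\I$ and so in general leaves $\I$, which destroys the requirement that the apex of your span $I$ lie in $\I$ --- the very closure failure you invoke against the cofree candidate. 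Remark \ref{epis_in_C} and Lemma \ref{pullback} are verification tools, not a construction.

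The missing device is the paper's key trick (used there to repair the map $g_2\colon I_A\to I_C$ in the dual sequence): apply completeness of $(\P,\I)$ to the \emph{target} to get an admissible epimorphism $h\colon Q\twoheadrightarrow I_C$ with $\ker h\in\I$ and $Q\in\P$; since $I_C\in\I$ and $\I$ is closed under extensions, $Q\in\P\cap\I$. Now enlarge the apex resolution to $0\to A\xrightarrow{(u_A,0)} I_A+Q\to P_A+Q\to 0$. Because $Q\in\P\cap\I$, the middle term stays in $\I$ and the cokernel stays in $\P$; the corrected map $(w,h)\colon I_A+Q\to I_C$ is an admissible epimorphism whose kernel is the pullback $I_A\times_{I_C}Q$, which lies in $\I$ by Lemma \ref{pullback}, so it is an acyclic fibration, and it still extends $u_Cg$. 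Your pushout construction for the $f$-leg then goes through verbatim with $I_A+Q$ in place of $I_A$. (The dual repair, making a map of $\P$-parts into a cofibration by factoring and replacing the \emph{target}, is the one place where Proposition \ref{factorization} genuinely does the work; there the enlarged target is an extension of $\P$-objects and so stays in $\P$.) Without this summand trick the proof does not close.
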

\begin{proof}
Let $X: C\xleftarrow{g} A\xrightarrow{f} B$ be an element in $\Span(\C)$. We show there exists  a resolution in $\Span(\C)$
\[\btk 0\rar & I\rar & P\rar & X\rar & 0\etk\] for some $I\in\I_\Sp$ and $P\in\P_\Sp$.

Since $(\P,\I)$ is complete, we can construct resolutions as pictured below left
\[\btk
0\rar & I_A\rar["\alpha_2"]
 & P_A\rar["\alpha_1"]
 & A
\dar["g"]\rar & 0\\
0\rar & I_C\rar["\gamma_2"'] & P_C\rar["\gamma_1"'] & C\rar & 0
\etk
\hspace{1cm}
\btk
0\rar[hookrightarrow]\dar & P_C\dar[twoheadrightarrow, "\gamma_1","\sim"']\\
P_A\rar["g\alpha_1"']\ar[ur, dashed, "g_1"] & C
\etk\] for some $P_A,  P_C\in\P$ and $I_A, I_C\in\I$. Since $\gamma_1$ is an acyclic fibration and $P_A\in\P$, we get a lift in the diagram above right, which induces a map on resolutions
\begin{equation}\label{resol1}
\btk
0\rar & I_A\rar["\alpha_2"]
\dar["g_2"] & P_A\rar["\alpha_1"]
\dar["g_1"] & A
\dar["g"]\rar & 0\\
0\rar & I_C\rar["\gamma_2"'] & P_C\rar["\gamma_1"'] & C\rar & 0
\etk
\end{equation}
Note, however, that there is no way to ensure that $g_2$ is an acyclic fibration. In order to fix this, we modify the given resolutions as follows.

Appealing to the completeness of $(\P,\I)$ once more, we get a resolution
\[\btk 0\rar & I_{I_C}\rar & P_{I_C}\rar["h"] & I_C\rar & 0\etk\]
form some $I_{I_C}\in\I$, $P_{I_C}\in\P$. Note that in this case we also have $P_{I_C}\in\I$, since $\I$ is closed under extensions.

Now replace the top half of our original resolution (\ref{resol1}) by
\begin{equation}\label{down}\btk
0\rar & I_A+P_{I_C}\rar["\alpha_2 + 1_Y"]\dar["g_2 +h"] & P_A+P_{I_C}\rar["{[\alpha_1,0]}"]\dar["g_1 +\gamma_2 h"] & A\dar["g"]\rar & 0\\
0\rar & I_C\rar["\gamma_2"'] & P_C\rar["\gamma_1"'] & C\rar & 0
\etk\end{equation} This is a commutative diagram with exact rows, and since $P_{I_C}\in\P\cap\I$, we have $P_A+P_{I_C}\in\P$ and $I_A +P_{I_C}\in\I$. Furthermore, the map $g_2 +h$ is an admissible epimorphism (because $h$ is), whose kernel is the pullback
\[\btk
I_A\times_{I_C} P_{I_C}\dar\rar & P_{I_C}\dar["h","\sim"', twoheadrightarrow]\\
I_A\rar["-g_2"'] & I_C
\etk\] which belongs to $\I$ by Lemma \ref{pullback}. Thus, $g_2 +h$ is an acyclic fibration.

By the same reasoning, there exists a resolution of $B$ and a commutative diagram
\[\btk
0\rar & I_B\rar["\beta_2"] & P_B\rar["\beta_1"] & B\rar & 0\\
0\rar & I_A+P_{I_C}\rar["\alpha_2 + 1_Y"']\uar["f_2"'] & P_A+P_{I_C}\rar["\alpha_1 +0"']\uar["f_1"'] & A\uar["f"']\rar & 0\\
 \etk\] Again, in order for this to be a part of the resolution we seek, we must modify it to compensate for the fact that $f_1$ is likely not a cofibration.

Using Theorem \ref{factorization}, we can factor $f_1$ as $\btk[column sep=small] P_A+P_{I_C}\rar[hookrightarrow,"i"] & \overline{P_B}\rar["p", twoheadrightarrow,"\sim"'] & P_B\etk$. Then $\overline{P_B}\in\P$, and we consider the diagram of exact rows
\begin{equation}\label{up}\btk
0\rar & \ker(\beta_1 p)\rar & \overline{P_B}\rar["\beta_1 p"] & B\rar & 0\\
0\rar & I_A+P_{I_C}\rar["\alpha_2 + 1_Y"']\uar & P_A+P_{I_C}\rar["\alpha_1 +0"']\uar["i"'] & A\uar["f"']\rar & 0\\
 \etk\end{equation} where $i$ is a cofibration, and $\ker(\beta_1 p)\in\I$ since both $\beta_1$ and $p$ are acyclic fibrations and these are closed under composition (Lemma \ref{compositions}).

 Pasting diagrams (\ref{down}) and (\ref{up}) together yields the desired resolution. Finally, dualizing the argument, one obtains a resolution
 \[\btk 0\rar & X\rar & I'\rar & P'\rar & 0\etk\]
  for some $I'\in \I_\Sp$, $P'\in\P_\Sp$.
\end{proof}


Theorem \ref{spans} shows that if $(\P,\I)$ is a complete cotorsion pair, then so is $(\P_\Sp, \I_\Sp)$, and thus Lemma \ref{compositions}, Proposition \ref{liftings} and Proposition \ref{factorization} also apply.

In this case, the classes of maps in $\Span(\C)$ that we get from $(\P_\Sp, \I_\Sp)$ are as follows:
\begin{itemize}
\item cofibrations: maps $(i,j,k)$ between objects of $\P_\Sp$ as below
\[\btk
C\dar["i"] & A\dar["j"]\lar\rar[hookrightarrow] & B\dar["k"]\\
C' & A'\lar\rar[hookrightarrow] & B'
\etk\] such that $i, j, k$ are admissible monomorphisms in $\C$, and \[\cok(i,j,k)=\cok i \leftarrow \cok j \to\cok k\in\P_\Sp.\] In other words, maps $(i,j,k)$ such that $i,j,k,$ and the induced map $\cok j\to\cok k$ are cofibrations in $\P$.
\item acyclic fibrations: maps $(o,p,q)$ between objects of $\Span(\C)$ as below
\[\btk
C\dar["o"] & A\dar["p"]\lar\rar & B\dar["q"]\\
C' & A'\lar\rar & B'
\etk\] such that $o, p, q$ are admissible epimorphisms in $\C$, and \[\ker(o,p,q)=\ker o \leftarrow\ker p \to \ker q \in\I_\Sp.\] More concretely, maps $(o,p,q)$ such that $o,p,q,$ and $\ker p\to\ker o$ are acyclic fibrations in $\C$.
\end{itemize}

\end{document}